\definecolor{red}{RGB}{255,25,25}
\definecolor{blue}{RGB}{25,50,200}
\def\MR#1{\href{https://mathscinet.ams.org/mathscinet-getitem?mr=#1}{MR#1}}
\providecommand{\bysame}{\leavevmode\hbox to3em{\hrulefill}\thinspace}
\providecommand{\MR}{\relax\ifhmode\unskip\space\fi MR }
\providecommand{\href}[2]{#2}
\newtheorem{theorem}{Theorem}[section]
\newtheorem{lemma}[theorem]{Lemma}
\newtheorem{proposition}[theorem]{Proposition}
\newtheorem{corollary}[theorem]{Corollary}
\theoremstyle{definition}
\newtheorem{definition}[theorem]{Definition}
\newtheorem{example}[theorem]{Example}
\newtheorem*{convention}{Convention}
\theoremstyle{remark}
\newtheorem{remark}[theorem]{Remark}
\newtheorem*{rmk}{Remark}
\newtheorem{step}{\sc Step}
\numberwithin{equation}{section}
\newcommand{\eps}{\epsilon}
\newcommand{\wbar}[1]{\overline{#1}}
\newcommand{\ol}[1]{\overline{#1}}
\newcommand{\lra}{\longrightarrow}
\newcommand{\ratmap}{\dashrightarrow}
\newcommand{\injmap}{\hookrightarrow}
\newcommand{\longinjmap}{\lhook\joinrel\longrightarrow}
\newcommand{\surjmap}{\twoheadrightarrow}
\newcommand{\longsurjmap}{\relbar\joinrel\twoheadrightarrow}
\newcommand{\arxiv}[1]{\href{https://arxiv.org/abs/#1}{{\tt arXiv:#1}}}
\newcommand{\bbK}{\mathbb{K}}
\newcommand{\bk}{\mathbbm{k}}
\newcommand{\bH}{\mathbf{H}}
\newcommand{\bK}{\mathbf{K}}
\newcommand{\bP}{\mathbf{P}}
\newcommand{\bC}{\mathbb{C}}
\newcommand{\bF}{\mathbb{F}}
\newcommand{\bQ}{\mathbb{Q}}
\newcommand{\bR}{\mathbb{R}}
\newcommand{\bZ}{\mathbb{Z}}
\newcommand{\sE}{\mathscr{E}}
\newcommand{\sI}{\mathscr{I}}
\newcommand{\sL}{\mathscr{L}}
\newcommand{\sO}{\mathscr{O}}
\newcommand{\alg}{\textrm{alg}}
\newcommand{\et}{\textrm{\'et}}
\newcommand{\red}{\textrm{red}}
\newcommand{\sm}{\textrm{sm}}
\newcommand{\GL}{\mathrm{GL}}
\newcommand{\SL}{\mathrm{SL}}
\newcommand{\PGL}{\mathrm{PGL}}
\newcommand{\ab}{\operatorname{ab}}
\newcommand{\ad}{\operatorname{ad}}
\newcommand{\alb}{\operatorname{alb}}
\newcommand{\Alb}{\operatorname{Alb}}
\newcommand{\biralb}{\mathfrak{alb}}
\newcommand{\birAlb}{\mathfrak{Alb}}
\newcommand{\Amp}{\operatorname{Amp}}
\newcommand{\aut}{\mathbf{Aut}}
\newcommand{\Aut}{\operatorname{Aut}}
\newcommand{\Bigcone}{\operatorname{Big}}
\newcommand{\CH}{\operatorname{CH}}
\newcommand{\codim}{\operatorname{codim}}
\newcommand{\dr}{\operatorname{dr}}
\newcommand{\Eff}{\overline{\operatorname{Eff}}}
\newcommand{\End}{\operatorname{End}}
\newcommand{\id}{\operatorname{id}}
\newcommand{\im}{\operatorname{Im}}
\newcommand{\isom}{\simeq}
\newcommand{\Ker}{\operatorname{Ker}}
\newcommand{\Mat}{\operatorname{M}}
\newcommand{\Nef}{\operatorname{Nef}}
\newcommand{\NS}{\operatorname{NS}}
\newcommand{\num}{\equiv}
\newcommand{\pic}{\mathbf{Pic}}
\newcommand{\Pic}{\operatorname{Pic}}
\newcommand{\PL}{\operatorname{PL}}
\newcommand{\proj}{\operatorname{proj}}
\newcommand{\rank}{\operatorname{rank}}
\newcommand{\Sing}{\operatorname{Sing}}
\newcommand{\Stab}{\operatorname{Stab}}
\newcommand{\Sym}{\operatorname{Sym}}
\newcommand{\wnum}{\equiv_w}
\newcommand{\A}{A}
\newcommand{\N}{N}
\newcommand{\W}{W}
\newcommand{\Z}{Z}
\begin{document}

\title[A theorem of Tits type for automorphism groups in arbitrary characteristic]{A theorem of Tits type for automorphism groups of projective varieties in arbitrary characteristic  \\[1em] \normalfont\footnotesize With an appendix by Tomohide Terasoma}

\author{Fei Hu}

\address{Department of Mathematics, University of British Columbia, 1984 Mathematics Road, Vancouver, BC V6T 1Z2, Canada
\endgraf Pacific Institute for the Mathematical Sciences, 2207 Main Mall, Vancouver, BC V6T 1Z4, Canada}
\email{\href{mailto:fhu@math.ubc.ca}{\tt fhu@math.ubc.ca}}

\address{Graduate School of Mathematical Sciences, The University of Tokyo, 3-8-1 Komaba, Meguro, Tokyo 153-8914, Japan}
\email{\href{mailto:terasoma@ms.u-tokyo.ac.jp}{\tt terasoma@ms.u-tokyo.ac.jp}}

\begin{abstract}
We prove a theorem of Tits type for automorphism groups of projective varieties over an algebraically closed field of arbitrary characteristic, which was first conjectured by Keum, Oguiso and Zhang for complex projective varieties.
\end{abstract}

\subjclass[2010]{
14G17, 
14J50, 
37B40, 
14C25. 
}


\keywords{positive characteristic, automorphism, dynamical degree, algebraic cycle}

\thanks{The author was partially supported by a UBC-PIMS Postdoctoral Fellowship.}

\maketitle



\section{Introduction} \label{section-intro}


\noindent
In 1972, Jacques Tits \cite{Tits72} proved his famous alternative theorem for linear groups. Namely, a finitely generated linear group either contains a non-abelian free subgroup or has a solvable subgroup of finite index.
Later, Keum, Oguiso and Zhang \cite{KOZ09} raised a conjecture of Tits type for automorphism groups of compact K\"ahler manifolds or complex projective varieties with mild singularities.
The first case was soon proved by Zhang \cite{Zhang09} and further generalized in \cite{CWZ14} showing that the automorphism group of a compact K\"ahler manifold satisfies the Tits alternative theorem.
See Dinh \cite{Dinh12} for a survey.

In this paper, we work over an algebraically closed field $\bk$ of arbitrary characteristic and prove a theorem of Tits type for automorphism groups of projective varieties.
The study on the dynamics of projective varieties in positive characteristic has attracted a lot of attention in recent years. For instance, see \cite{ES13, EOY16, Xie15} for the dynamics of surfaces and also \cite{Truong1605, Dang17} for the higher-dimensional case.

Throughout, an algebraic variety is an irreducible reduced separated scheme of finite type over $\bk$ as in \cite{GTM52}.
It is well known that the automorphism group scheme $\aut_X$ of a projective variety $X$ is locally of finite type over $\bk$ and $\Aut(X) = \aut_X(\bk)$; in particular, the reduced neutral component $(\aut^0_X)_{\red}$ of $\aut_X$ is a smooth algebraic group over $\bk$ (see e.g. \cite[\S7]{Brion17}). Denote $(\aut^0_X)_{\red}(\bk)$ by $\Aut^0(X)$.

\begin{theorem} \label{theorem-Tits}
Let $X$ be a projective variety of dimension $n\ge 2$, and $G$ a subgroup of $\Aut(X)$.
Suppose that $G$ does not contain any non-abelian free subgroup.
Then the following assertions hold.
\begin{enumerate}[{\em (1)}]
\item \label{theorem-Tits-ass1} There is a finite-index subgroup $G_1$ of $G$ such that the induced group $G_1|_{\NS_\bR(X)}$ is solvable and Z-connected.
Moreover, let $N(G_1)$ denote the subset of $G_1$ consisting of automorphisms in $G_1$ of null entropy.
Then $N(G_1)$ is a normal subgroup of $G_1$ such that the quotient group $G_1/N(G_1)$ is a free abelian group of rank $r \le n - 1$.
\item \label{theorem-Tits-ass2} Suppose further that $G^0 \coloneqq G \cap \Aut^0(X)$ is finitely generated.
Then $G$ is virtually solvable (i.e., it has a solvable subgroup of finite index).
\end{enumerate}
\end{theorem}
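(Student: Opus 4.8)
The plan is to study the linear representation $\rho\colon\Aut(X)\to\GL(\NS_\bR(X))$ and to propagate information from the image $\rho(G)$ back to $G$ itself. For assertion~(1): since $\NS_\bR(X)$ is finite-dimensional over a field of characteristic $0$, the Tits alternative applies to the linear group $\rho(G)$ \emph{without} any finite-generation hypothesis, so, as $\rho(G)$ (like $G$) contains no non-abelian free subgroup, $\rho(G)$ is virtually solvable; passing to a finite-index subgroup with solvable image, and then to a further finite-index subgroup so that the Zariski closure of the image in $\GL(\NS_\bR(X))$ is connected, produces $G_1\le G$ of finite index with $G_1|_{\NS_\bR(X)}$ solvable and Z-connected.

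For the remaining part of assertion~(1), I would analyze this solvable, Z-connected subgroup of $\GL(\NS_\bR(X))$, which preserves the salient, full-dimensional nef cone. After base change to $\bC$, Lie--Kolchin triangularizes the connected solvable group $\overline{\rho(G_1)}$, and its diagonal characters $\chi_1,\dots,\chi_m$ make $g\mapsto(\log|\chi_1(g)|,\dots,\log|\chi_m(g)|)\in\bR^m$ a group homomorphism. Because $\rho(g)$ is induced by an automorphism of the lattice $\NS(X)/(\text{torsion})$ and hence has determinant $\pm 1$, its spectral radius on $\NS_\bR(X)$ --- which is the first dynamical degree $d_1(g)$ --- equals $1$ exactly when all $|\chi_i(g)|=1$; and by the characteristic-free theory of dynamical degrees (the intersection-theoretic inequality $d_p(g)\le d_1(g)^p$, log-concavity of $p\mapsto\log d_p(g)$, and $d_n(g)=1$ for an automorphism) this happens exactly when all $d_p(g)=1$, i.e.\ when $g$ has null entropy. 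Thus $N(G_1)$ is the kernel of the above homomorphism: it is normal in $G_1$, and $G_1/N(G_1)$ embeds in $\bR^m$. That image is moreover \emph{discrete}: by Kronecker's theorem --- quantitatively, by Dobrowolski's lower bound on Mahler measures of bounded degree --- the spectral radius of an automorphism of the lattice $\NS(X)/(\text{torsion})$ is either $1$ or at least a constant $>1$ depending only on the Picard number, so the image contains no nonzero vector of small sup-norm; hence $G_1/N(G_1)\cong\bZ^r$ for some $r$.

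Bounding $r\le n-1$ is the analogue in our setting of the theorems of Dinh--Sibony and of Zhang over $\bC$, and I expect it to be the principal obstacle. The plan is to transcribe their argument to arbitrary characteristic, working with the cones of (pseudo-)effective and nef classes inside the spaces $N^p(X)_\bR$ of numerical cycle classes in place of Hodge-theoretic positivity of $(p,p)$-classes, and using the mixed-degree (Khovanskii--Teissier) and Hodge-index-type inequalities, all of which hold in every characteristic because intersection theory is characteristic-free. In those terms, the abelianized action of $G_1$ produces common ``eigen-nef'' classes, which must be isotropic --- otherwise $d_p(g)=d_1(g)^p$ for some $g$ and $p$, forcing $d_n(g)>1$ --- and a positivity argument bounds the number of independent such classes by $n-1$. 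The delicate point will be to verify that each step of the complex argument genuinely survives the absence of Hodge theory.

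For assertion~(2), put $K\coloneqq G_1\cap\Ker\rho=G_1\cap\Aut_{\NS}(X)$, so $G_1/K\cong\rho(G_1)$ is solvable by assertion~(1). The graph in $X\times X$ of any automorphism fixing the numerical class of a fixed ample line bundle $\mathcal{O}_X(1)$ has a fixed $(\mathcal{O}_X(1)\boxtimes\mathcal{O}_X(1))$-degree, so these graphs form a bounded family; since $\Aut^0(X)\subseteq\Aut_{\NS}(X)$ lies in this locus, $[\Aut_{\NS}(X):\Aut^0(X)]<\infty$, whence $G_1\cap\Aut^0(X)$ has finite index in $K$ and, as $[G:G_1]<\infty$, also in $G^0$; by hypothesis $G^0$ is finitely generated, so $G_1\cap\Aut^0(X)$ and hence $K$ are finitely generated. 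Next, $G^0$ is virtually solvable: replacing it by a finite-index subgroup that is Zariski-dense in a connected algebraic group $\bG\subseteq\Aut^0(X)$ and invoking Chevalley's theorem, $\bG$ is an extension of an abelian variety by a connected linear group $L$, and if $L$ were not solvable the non-trivial semisimple quotient of $\bG$ would receive from $G^0$ a finitely generated Zariski-dense --- hence, by the Tits alternative over $\bk$ (now legitimately applicable), not virtually solvable --- subgroup containing a non-abelian free group, which lifts to one in $G^0\subseteq G$, a contradiction; therefore $L$, hence $\bG(\bk)$, and so $G^0$ and $K$, are virtually solvable. Finally, $K$ being finitely generated and virtually solvable, it has a characteristic finite-index solvable subgroup $S$ (the intersection of its finitely many solvable subgroups of minimal index), which is then normal in $G_1$; the group $G_1/S$ is a finite extension of the solvable group $\rho(G_1)$ and hence virtually solvable --- its subgroup centralizing the finite normal subgroup $K/S$ is central-by-solvable, so solvable, and of finite index --- and pulling a solvable finite-index subgroup back through $1\to S\to G_1\to G_1/S\to 1$ yields a solvable finite-index subgroup of $G_1$, hence of $G$. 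Finite generation of $G^0$ is genuinely needed here, since over a field of characteristic $p$ the Tits alternative fails for non-finitely-generated linear groups --- for instance $\SL_2(\overline{\bF_p})$ is locally finite, so it contains no non-abelian free subgroup while being very far from virtually solvable.
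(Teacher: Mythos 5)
Your overall architecture is reasonable, but both assertions have genuine gaps that coincide with the places where the paper does its real work.

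For assertion~(1), your route is genuinely different from the paper's. You triangularize $\rho(G_1)$ via Lie--Kolchin, obtain characters $\chi_1,\dots,\chi_m$ on the full $\NS_\bC(X)$ (so $m=\dim\NS_\bR(X)$), get a map to $\bR^m$ whose kernel is $N(G_1)$ essentially for free (via $\det=\pm 1$ and log-concavity), and obtain discreteness via a Lehmer-type bound. All of this is fine as far as it goes, but it places the entire weight of the theorem on the rank bound $r\le n-1$, which you explicitly defer: ``I expect it to be the principal obstacle\dots the plan is to transcribe their argument\dots the delicate point will be to verify that each step \dots genuinely survives the absence of Hodge theory.'' That deferred step \emph{is} the theorem. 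The paper avoids this shape of argument altogether: instead of triangularizing on $\NS_\bC(X)$, it builds a quasi-nef sequence $D_1,\dots,D_n$ via iterated applications of the Lie--Kolchin-type cone theorem, obtaining characters $\chi_1,\dots,\chi_n$ with $\prod_i\chi_i(g)=1$ and a homomorphism $\psi\colon G_1\to\bR^{n-1}$ whose target already has the right dimension; the hard work is then showing $\Ker\psi\subseteq N(G_1)$, which is done with the weak-numerical-equivalence machinery (Lemma~\ref{lemma-num-weak-num} and especially Lemma~\ref{lemma-2.3II}, the characteristic-free analogue of Dinh--Sibony's Corollaire~3.5). Your proposal contains none of that substitute for Hodge theory, so the crucial bound is unproved.

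For assertion~(2), you invoke Chevalley's theorem to write $\bG$ as an extension $1\to L\to\bG\to A\to 1$ with $L$ linear and $A$ an abelian variety, and then speak of ``the non-trivial semisimple quotient of $\bG$.'' This is where the argument breaks: Chevalley puts the \emph{linear} part as the normal subgroup and the abelian variety as the \emph{quotient}, so $\bG/\!\operatorname{rad}(L)$ is an extension of $A$ by $L/\!\operatorname{rad}(L)$, which is not semisimple and does not give a semisimple quotient of $\bG$ without further structure theory. Meanwhile, you cannot instead work inside $L$: the subgroup $G^0\cap L(\bk)$ of the finitely generated group $G^0$ need not be finitely generated, which is precisely the phenomenon Terasoma's appendix exhibits and precisely why the paper does \emph{not} use Chevalley. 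What is actually needed is the opposite decomposition --- Brion's theorem (equivalently, the Rosenlicht decomposition): a \emph{commutative} normal subgroup scheme $\bH$ with $\bG/\bH$ \emph{affine}. Then $G^0/(G^0\cap\bH(\bk))$ is a \emph{quotient} of $G^0$ --- hence finitely generated --- sitting inside a linear group, so the finitely generated Tits alternative applies, and $G^0\cap\bH(\bk)$ is abelian, so Dinh's devissage lemma finishes. Your ``semisimple quotient of $\bG$'' can be justified (since the anti-affine radical is central, $\bG/\bG_{\mathrm{ant}}$ is affine and not solvable when $L$ isn't, and one passes to its semisimple quotient), but this is exactly the Brion/Rosenlicht input, not a consequence of Chevalley's theorem as you wrote. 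The remainder of your devissage in (2) (the boundedness argument for $[\Aut_{\NS}(X):\Aut^0(X)]<\infty$, the characteristic finite-index solvable subgroup, the centralizer-of-a-finite-normal-subgroup trick) is correct, and your closing remark about $\SL_2(\overline{\bF_p})$ correctly identifies why finite generation is needed.
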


Here $\NS(X)$ denotes the N\'eron--Severi group of $X$, which is a finitely generated abelian group (cf. \cite[Expos\'e~XIII, Th\'eor\`eme~5.1]{SGA6}).
For a field $\bF = \bQ$, $\bR$ or $\bC$,
the $\bF$-vector space $\NS_\bF(X)$ stands for $\NS(X) \otimes_\bZ \bF$.
Inspired by \cite[\S3]{Dinh12} and \cite[\S6.3]{ES13}, we define the {\it first dynamical degree} of an automorphism $g\in\Aut(X)$ as the {\it spectral radius} of its natural pullback action $g^*$ on $\NS_\bR(X)$, i.e.,
\begin{equation*}
d_1(g) \coloneqq \rho\Big(g^*|_{\NS_\bR(X)} \Big) \coloneqq \max\Big\{|\lambda| : \lambda\textrm{ is an eigenvalue of } g^*|_{\NS_\bR(X)} \Big\}.
\end{equation*}
We say that $g$ is of {\it positive entropy} if $d_1(g) > 1$, otherwise it is of {\it null entropy}.
See \S\ref{subsec-dyn-deg} for more details.

The induced group $G|_{\NS_\bR(X)}$ (or $G|_{\NS_\bC(X)}$) is called {\it Z-connected} if its Zariski closure in $\GL(\NS_\bC(X))$ is connected with respect to the Zariski topology.
Note that being Z-connected is only a technical condition for us to apply the theorem of Lie--Kolchin type for a cone in \cite{KOZ09}.
Actually, it is always satisfied by replacing the group with a finite-index subgroup (see e.g. \cite[Remark~3.10]{DHZ15}).

\begin{remark}
(1) In the assertion (\ref{theorem-Tits-ass1}), the rank of the quotient group $G_1/N(G_1)$ is called the {\it dynamical rank of $G_1$} and denoted by $\dr(G_1)$.
It turns out that $\dr(G_1)$ does not depend on the choice of $G_1$.
Hence by a slight abuse of notation, it is also called the {\it dynamical rank of $G$} and denoted by $\dr(G)$.
Sometimes, we may write $\dr(G|_X)$ to emphasize that it is the dynamical rank of the group $G$ acting on $X$.
Generally, given a $G$-action on some algebraic variety $V$, we denote by $G|_V$ the image of $G$ in $\Aut(V)$.

(2) Over $\bC$, the upper bound $\dr(G) \le n-1$ is optimal as seen in \cite[Example~4.5]{DS04}.
Their construction could be generalized to arbitrary characteristic; see our Example \ref{eg:maximal-rank}.
We also refer the interested reader to \cite{Hu-ecdg} for a more general consideration.

(3) The assertion (\ref{theorem-Tits-ass2}) may be false without the finite generation condition.
For instance, let $X$ be the projective space $\bP_\bk^n$ of dimension $n$ over $\bk$. Then $\Aut(X) = \PGL_{n+1}(\bk)$ is a linear algebraic group.
However, the Tits alternative theorem for linear groups in positive characteristic does require a finite generation condition (see \cite[Corollary~1]{Tits72}; indeed, $\GL_n(\wbar \bF_p)$ is a counterexample noticed by Tits \cite{Tits72}).
Alternatively, one may expect a counterpart of the Tits alternative theorem for any finitely generated subgroup $G$ of $\Aut(X)$.
The main obstruction along this direction is that, in general, a subgroup of a finitely generated group $G$ may not be finitely generated.
Indeed, see Appendix~\ref{appendix} for a concrete example due to Terasoma (note that in his notation, $\Aut^0(X)$ is slightly different with ours; but for his explicit example, these two turn out to be the same).
His example gives us a finitely generated solvable subgroup $G$ of $\Aut(X)$ for some projective threefold $X$ such that $G^0 \coloneqq G \cap \Aut^0(X)$ is not finitely generated.
Hence the imposed finite generation assumption on $G^0$ seems to be natural.
\end{remark}

Our next result obtains a better upper bound of the dynamical rank in terms of `Kodaira dimensions'.
See Zhang \cite[Lemma~2.11]{Zhang09} or Dinh \cite[Theorem~1.1]{Dinh12} for related results.
Note however that the proof in \cite{Zhang09} or \cite{Dinh12} hinges on the Deligne--Nakamura--Ueno theorem (cf. \cite[Theorem~14.10]{Ueno75}), which is not known in positive characteristic to the best of our knowledge, not to mention the resolution of singularities.
So simply mimicking Zhang or Dinh's argument does not guarantee the desired upper bound of the dynamical rank.
Thus, one purpose of this paper is to show how the proof given in \cite{Zhang09} or \cite{Dinh12} can be modified so as to circumvent \cite{Ueno75} and Hironaka.
Further, inspired by Dinh--Sibony \cite{DS04}, we are particularly interested in the case when the dynamical rank is maximal, i.e., $\dr(G) = n-1$.
The theorem below also gives some numerical constraints.

\begin{theorem} \label{theorem-max-rank}
Let $X$ be a projective variety of dimension $n\ge 2$,
and $G$ a subgroup of $\Aut(X)$ such that the induced group $G|_{\NS_\bR(X)}$ is solvable and Z-connected.
Let $\nu \colon X^\nu \to X$ be the normalization of $X$.
\begin{enumerate}[{\em (1)}]
\item \label{theorem-max-rank-ass1} Then the dynamical rank of $G$ satisfies the following relation:
$$\dr(G) \le \min \Big\{n-1, \max \big\{0, n-1-\kappa(\omega_{X^\nu}) \big\}, \max \big\{0, n-1-\kappa(X) \big\} \Big\},$$
where $\kappa(\omega_{X^\nu})$ and $\kappa(X)$ denote the Kodaira--Iitaka dimension of $X^\nu$ and the Kodaira dimension of $X$, respectively.
\item \label{theorem-max-rank-ass2} Suppose that $\dr(G) = n-1$.
Then $\kappa(\omega_{X^\nu}) \le 0$, $\kappa(X) \le 0$, the irregularity $q(X) = 0$ or $n$, and the birational irregularity $\tilde q(X) = 0$ or $n$.
\end{enumerate}
\end{theorem}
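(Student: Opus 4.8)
The plan is to prove assertion (1) first, then derive (2) as a direct numerical consequence. The bound $\dr(G)\le n-1$ is already Theorem~1.1(1), so the content of (1) is the two further bounds $\dr(G)\le\max\{0,\,n-1-\kappa(\omega_{X^\nu})\}$ and $\dr(G)\le\max\{0,\,n-1-\kappa(X)\}$. These may be assumed trivially true when $\kappa\le 0$, so we may assume $\kappa\ge 1$. First I would pass to a finite-index subgroup $G_1\le G$ as in Theorem~1.1(1), so that $N(G_1)$ is normal with $G_1/N(G_1)\cong\bZ^{\oplus r}$ where $r=\dr(G)$; pick $r$ automorphisms $g_1,\dots,g_r\in G_1$ whose classes generate $G_1/N(G_1)$. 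The strategy, following Zhang \cite{Zhang09} and Dinh \cite{Dinh12}, is to play the canonical (or dualizing) linear system against the dynamics: the pluricanonical maps are canonical, hence $G_1$-equivariant, so they exhibit $X^\nu$ (resp.\ $X$) as fibered over its canonical model with $G_1$ acting compatibly on the base of dimension $\kappa$, and on a general fiber of dimension $n-\kappa$. Since $G_1$ acts on the $\kappa$-dimensional canonical image through a group with only null-entropy elements can be false, the positive-entropy directions must all be ``vertical'' for this fibration; iterating this idea forces $r\le n-1-\kappa$.

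The key steps, in order, are as follows. (i) \emph{Equivariance of the pluricanonical maps.} For $X$ normal, the sheaves $\omega_{X^\nu}^{[m]}$ (reflexive powers of the dualizing sheaf) are canonically attached to $X^\nu$, hence $\Aut(X^\nu)$-linearized; and $\Aut(X)$ lifts to $\Aut(X^\nu)$ by the universal property of normalization. So for $m\gg 0$ the Iitaka fibration $\phi\colon X^\nu\dashrightarrow Y$ onto the canonical model (of dimension $\kappa(\omega_{X^\nu})$) is $G_1$-equivariant after resolving indeterminacy equivariantly—and here is where I must avoid Hironaka: instead of resolving, I would work with the normalization of the graph, or better, use the fact that $G_1$ acts on the finitely generated canonical ring $R(X^\nu,\omega_{X^\nu})=\bigoplus_m H^0(X^\nu,\omega_{X^\nu}^{[m]})$ and hence on $Y=\Proj R(X^\nu,\omega_{X^\nu})$ directly, together with the rational map $\phi$. (ii) \emph{Dynamics on the base.} The induced action $G_1|_Y$ factors through a group of automorphisms of the projective variety $Y$ of dimension $\kappa$; one shows every element of $G_1$ acts on $Y$ with null entropy—if not, some $g\in G_1$ would have $d_1(g|_Y)>1$, but pulling back ample classes from $Y$ via $\phi$ and comparing with the spectral behaviour on $\NS_\bR(X^\nu)$ (using that $\phi^*$ is injective up to the vertical part) would contradict... no, rather it would be consistent, so what one actually needs is: the restriction of $G_1$ to $Y$ has dynamical rank $\le\kappa-1$ if $\kappa\ge 1$, hence by the fibration $\dr(G_1|_{X^\nu})\le \dr(G_1|_Y)+\dr(G_1|_{\text{fiber}})\le (\kappa-1)+(n-\kappa-1)=n-2$ is too strong; the correct inequality to establish is the subadditivity $\dr(G)\le\dr(G|_Y)+(n-\kappa)$ combined with the base case $\kappa=1$ where $Y$ is a curve and every automorphism of a curve has null entropy so $\dr(G|_Y)=0$, and then an inductive/fibered argument on $(n-\kappa)$ gives the clean bound $n-1-\kappa$. (iii) Deduce the $\kappa(X)$ version: the canonical sheaf $\omega_X$ (when $X$ is Gorenstein; in general one uses the dualizing complex or pushes down from $X^\nu$) gives an analogous $G_1$-equivariant rational map, and the same fibered estimate applies. (iv) For the normalization map $\nu$ itself: $\nu$ is $G$-equivariant and finite, and a finite equivariant morphism does not change dynamical degrees on Néron--Severi groups up to finite index, so $\dr(G|_X)=\dr(G|_{X^\nu})$ and the two bounds combine via the stated minimum.

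For assertion (2), suppose $\dr(G)=n-1$. Plugging into (1) forces $\max\{0,n-1-\kappa(\omega_{X^\nu})\}\ge n-1$ and $\max\{0,n-1-\kappa(X)\}\ge n-1$, i.e.\ $\kappa(\omega_{X^\nu})\le 0$ and $\kappa(X)\le 0$. For the (birational) irregularity, I would use the Albanese map $\alb_X\colon X^\nu\to \Alb(X^\nu)$ (or the birational Albanese of Serre in char~$p$, whose dimension is $\tilde q(X)$), which is again canonical hence $G_1$-equivariant. If $0<q(X)<n$ then $\alb_X$ is a nonconstant, non-dominant equivariant map to an abelian variety $A=\Alb(X^\nu)$ of dimension $q=q(X)$; let $Z$ be the (normalization of the) image, a subvariety of $A$ of dimension $0<\dim Z\le q<n$. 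The group $G_1$ acts on $Z$; automorphisms of a subvariety of an abelian variety are (up to finite index) translations composed with group automorphisms, and the translation part has null entropy while the linear part on $H^1$ contributes to dynamical degrees—but since $G_1|_{\NS_\bR}$ is solvable and $\dim Z<n$, one gets $\dr(G_1|_Z)\le\dim Z-1$, and then the Stein factorization of $\alb_X$ (with general fibers of dimension $n-\dim Z$) yields $\dr(G)\le(\dim Z-1)+(n-\dim Z)=n-1$ with equality only if the base contributes its maximum, which a positive-dimensional proper subvariety of an abelian variety cannot do because it would force $Z$ to be an abelian variety equal to $A$, contradicting $\dim Z<n$ unless $\dim Z=q$ and the fibration is everywhere maximal—chasing this contradiction through forces $\dim Z\in\{0,n\}$, i.e.\ $q(X)\in\{0,n\}$, and likewise $\tilde q(X)\in\{0,n\}$ using the birational Albanese.

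\textbf{Main obstacle.} The hardest part is Step~(i)/(ii): running the Iitaka-fibration argument of \cite{Zhang09,Dinh12} \emph{without} resolution of singularities and \emph{without} the Deligne--Nakamura--Ueno theorem \cite{Ueno75}. In particular I expect the delicate point to be establishing the subadditivity $\dr(G|_{X^\nu})\le \dr(G|_Y)+(n-\kappa)$ in a purely birational/numerical way—i.e.\ comparing spectral radii of $g^*$ on $\NS_\bR(X^\nu)$ with those on $\NS_\bR(Y)$ and on a general fiber, when the fibration $\phi$ is only a rational map between possibly singular varieties. The technical fix will be to replace $X^\nu$ by the normalization of the closure of the graph of $\phi$ (which is $G_1$-equivariant and requires no resolution), on which the Néron--Severi pullbacks are honest morphisms, and then to control how $d_1$ changes under the (equivariant, generically finite) projection back to $X^\nu$—which again only changes dynamical degrees by a bounded, in fact trivial, amount on the relevant filtration of $\NS_\bR$.
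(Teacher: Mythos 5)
Your overall plan---equivariant Iitaka fibration, equivariant Albanese, and replacing a rational map by the normalization of its graph to avoid Hironaka---matches the paper's architecture. But the quantitative estimate you set out to prove is one off, and the ``main obstacle'' you flag is a genuine gap, not a technicality. The subadditivity you propose, $\dr(G|_{X^\nu}) \le \dr(G|_Y) + (n-\kappa)$, is too weak: even granting $\dr(G|_Y)=0$ it gives only $\dr(G)\le n-\kappa$, not $n-1-\kappa$; and in part (2) it yields $\dr(G)\le (\dim Y-1)+(n-\dim Y)=n-1$, which is no contradiction to maximal rank. The correct estimate is Lemma~\ref{lemma-G-fib-rank} (extended to rational maps in Lemma~\ref{lemma-G-rat-fib-rank}): for a $G$-equivariant dominant map $X\to Y$ with $0<m=\dim Y<n$, one has $\dr(G|_X)\le \dr(G|_Y) + (n-m-1)$. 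The extra $-1$ is the heart of the matter; it comes from pulling back a quasi-nef sequence $B_1,\dots,B_m$ on $Y$ (with characters $\chi_1,\dots,\chi_m$ of $G|_Y$ satisfying $\chi_1\cdots\chi_m=1$ because $\dim Y=m$), extending it to a length-$n$ quasi-nef sequence on $X$, and then noting that the full character tuple is subject to \emph{two} relations $\chi_1\cdots\chi_m=1$ and $\chi_1\cdots\chi_n=1$. Feeding this into the kernel/image analysis of the map $\psi$ from the proof of Theorem~\ref{theorem-Tits}(1) is exactly what produces the $-1$. You never prove a subadditivity of either form, and the form you aim at would not close the argument even if proved.

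You also get very close to, and then abandon, the observation that $\dr(G|_Y)=0$ for every Iitaka base, not only when $\kappa=1$: $G$ acts linearly on $\bP H^0(X^\nu,\omega_{X^\nu}^{[m]})$ and preserves $Y$, hence fixes the hyperplane class on $Y$, so $G|_Y$ has null entropy by Lemma~\ref{lemma-equiv-dyn-deg}. Keeping this together with the corrected Lemma~\ref{lemma-G-rat-fib-rank}, there is no induction on $\kappa$ or $(n-\kappa)$ to run: one gets $\dr(G|_{X^\nu})\le 0+(n-\kappa-1)$ immediately. For assertion (2), your ``chasing this contradiction through automorphism groups of subvarieties of abelian varieties'' is unsubstantiated. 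The paper instead applies Lemma~\ref{lemma-G-rat-fib-rank} to $X\to\im(\alb_X)$: if $0<\dim\im(\alb_X)<n$ then $\dr(G)\le (\dim\im(\alb_X)-1)+(n-\dim\im(\alb_X)-1)=n-2$, a contradiction, so $\dim\im(\alb_X)=n$; then $\im(\alb_X)=\Alb(X)$ follows from Abramovich's theorem identifying $\kappa(\im(\alb_X))$ with $\dim\bigl(\im(\alb_X)/\Stab(\im(\alb_X))\bigr)$ combined with $\kappa(\im(\alb_X))\le 0$, the latter obtained by applying Lemma~\ref{lemma-max-rank-kappa} to $\im(\alb_X)$ via the invariance of dynamical rank under the generically finite map $\alb_X$ (Lemma~\ref{lemma-inv-dyn-rank}).
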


See \S\ref{subsec-kappa-q} for the precise definitions of $\kappa(\omega_{X^\nu})$, $\kappa(X)$, $q(X)$ and $\tilde q(X)$.
Here we have two kinds of `Kodaira dimension'.
Indeed, there is no `canonical' definition of the Kodaira dimension for general singular varieties since the resolution of singularities in positive characteristic is still unknown (at least for the dimension greater than three).

The paper is organized as follows.
After a brief review of the Kodaira dimension and the (birational) irregularity in \S\ref{subsec-kappa-q}, as well as the intersection theory on singular varieties in \S\ref{subsec-cycle}, we shall introduce a non-standard notion, so-called `weak numerical equivalence', which allows us to generalize \cite{DS04} in our context.
A crucial point is that it is equivalent to the usual numerical equivalence for nef classes (see Lemma~\ref{lemma-num-weak-num}).
Then in \S\ref{subsec-dyn-deg}, we define the $k$-th dynamical degree which turns out to be a birational invariant.
In \S\ref{subsec-HIT}, we prove a higher-dimensional Hodge index theorem for $\bR$-Cartier divisors.
Note that this is the main ingredient utilized by Dinh and Sibony in \cite{DS04}.
In \S\ref{subsec-quasi-nef}, we give a review of the quasi-nef sequence which plays an essential role in Zhang~\cite{Zhang09}.
Finally, we prove Theorems~\ref{theorem-Tits} and \ref{theorem-max-rank} in Sections~\ref{sec-proof-Tits} and \ref{sec-proof-max-rank}, respectively.


\section{Preliminaries} \label{sec-prelim}


\noindent
Throughout this section, unless otherwise stated, $X$ is a projective variety of dimension $n$.
We shall follow Hartshorne \cite{GTM52} for the standard definitions and notation in algebraic geometry.
Besides, we refer to Lazarsfeld \cite{Lazarsfeld-I} for the theory of $\bR$-Cartier divisors and Fulton \cite{Fulton98} for the intersection theory on general singular varieties.

\subsection{Kodaira dimension and irregularity} \label{subsec-kappa-q}

\subsubsection{Kodaira dimension and Kodaira--Iitaka dimension} \label{subsubsec-kappa}

Recall that the Kodaira dimension of a singular variety in characteristic zero is usually defined as the Kodaira dimension of any smooth model (see e.g. \cite[Example~2.1.5]{Lazarsfeld-I}).
However, due to the absence of the resolution of singularities in positive characteristic, there is some slight difficulty to define the Kodaira dimension of those singular varieties in positive characteristic.
Here we list the following two reasonable definitions.

The {\it Kodaira dimension} $\kappa(X)$ of $X$ is defined as in \cite[Definition~5.1]{Luo87}.
Note that Luo's definition depends only on the (rational) function field $\bk(X)$ of $X$.
So the main advantage of this definition is that $\kappa(X)$ is indeed a birational invariant of $X$.
Also, this definition coincides with Abramovich's (ad hoc) definition (i.e., \cite[Definition~1, p.~46]{Abramovich94}), at least for subvarieties of (semi)abelian varieties.
We will make use of this definition in the proof of Lemma~\ref{lemma-max-rank-q}.

Another definition of Kodaira dimension is more or less in the usual sense by using the `canonical sheaf'.
Suppose that $X$ is a {\it normal} projective variety.
Then its {\it canonical sheaf} is defined by $\omega_X \coloneqq j_*\,\omega_{X_{\sm}}$,
where $X_{\sm}$ is the smooth locus of $X$ and $j \colon X_{\sm} \injmap X$ is the corresponding open immersion.
Note that $\omega_X$ is a reflexive sheaf of rank one which gives rise to a Weil divisor, the {\it canonical divisor} $K_X$, such that $\sO_X(K_X) \sim \omega_X$.
The {\it Kodaira--Iitaka dimension} $\kappa(\omega_X)$ of $X$ is then defined as the Iitaka dimension $\kappa(X, \omega_X)$ of the canonical sheaf $\omega_X$ (or equivalently, the Iitaka-$D$ dimension of the canonical divisor $K_X$).
Recall that
\begin{equation*}
\kappa(X, \omega_X) \coloneqq
\left\{
\begin{array}{ll}
-\infty & \text{if $H^0(X, \omega_X^{[m]}) = 0$ for all $m > 0$,} \\
\max\Big\{\dim\big(\phi_{\omega_X^{[m]}}(X)\big) : m > 0 \Big\} & \text{otherwise,}
\end{array}
\right.
\end{equation*}
where $\omega_X^{[m]} \coloneqq (\omega_X^{\otimes m})^{\vee\vee}$ is the double dual of $\omega_X^{\otimes m}$.
We refer to \cite[Appendices~A and B]{Patakfalvi18} for some standard properties of $\kappa(X)$ and $\kappa(\omega_X)$.
For instance, \cite[Proposition~B.1]{Patakfalvi18} asserts that $\kappa(X) \le \kappa(\omega_X)$.
The main advantage of the second definition is the existence of the Iitaka fibration as in characteristic zero (see e.g. \cite[Lemma~A.7]{Patakfalvi18}).

In an analogous way, we can also define the {\it anti-Kodaira--Iitaka dimension} $\kappa^{-1}(\omega_X)$ as the Iitaka dimension of the anti-canonical sheaf $\omega_X^{-1}$, i.e., $\kappa^{-1}(\omega_X) \coloneqq \kappa(X, \omega_X^{-1})$.

\subsubsection{Albanese variety, Albanese morphism and irregularity} \label{subsubsec-alb}

Due to Serre \cite[Th\'eor\`eme~5]{Serre58},
there exist an abelian variety $\Alb(X)$
and a morphism $\alb_X \colon X \to \Alb(X)$
such that any morphism from $X$ to an abelian variety factors,
uniquely up to translations, through this $\Alb(X)$.
Then the abelian variety $\Alb(X)$ (resp. the morphism $\alb_X$) is called the {\it Albanese variety} (resp. the {\it Albanese morphism}) of $X$.
Note, however, that this construction of the Albanese morphism is, in general, not of a birational nature.
Alternatively, one can (birationally) define the {\it birational Albanese variety} $\birAlb(X)$ and the {\it Albanese map} $\biralb_X \colon X \ratmap \birAlb(X)$ using the universal property in an obvious way (cf. \cite[Chapter~II, \S3]{Lang83}).
The {\it irregularity} $q(X)$ and the {\it birational irregularity} $\tilde q(X)$ of $X$ are then defined as follows:
$$q(X) \coloneqq \dim \Alb(X), \ \ \tilde q(X) \coloneqq \dim \birAlb(X).$$
It is easy to see that $\alb_X$ factors through $\biralb_X$ by the universal property so that there is a surjective morphism $\birAlb(X) \to \Alb(X)$ of abelian varieties.
In particular, $\tilde q(X) \ge q(X)$.
See \cite[Th\'eor\`eme~6]{Serre58} for more relations between these two `Albanese varieties'.

If we assume that $X$ is a {\it normal} projective variety, then by \cite[Theorem~9.5.4]{Kleiman05}, the neutral component $\pic^0_{X/\bk}$ of the Picard scheme $\pic_{X/\bk}$ is a projective group scheme over $\bk$ (but it may not be reduced in positive characteristic).
Nevertheless, the reduction $(\pic^0_{X/\bk})_{\red}$ of $\pic^0_{X/\bk}$ is an abelian variety which is the dual abelian variety of the Albanese variety $\Alb(X)$; see \cite[Remark~9.5.25]{Kleiman05}.
Let $T_0 \pic^0_{X/\bk}$ denote the Zariski tangent space of $\pic^0_{X/\bk}$ at $[\sO_X]$,
which is canonically isomorphic to $H^1(X, \sO_X)$ (cf. \cite[Theorem~9.5.11]{Kleiman05}).
Then we have the following numerical relations:
$$q(X) = \dim \Alb(X) = \dim (\pic^0_{X/\bk})_{\red} = \dim \pic^0_{X/\bk} \le \dim T_0 \pic^0_{X/\bk} = h^1(X, \sO_X).$$
The equality holds if and only if $\pic^0_{X/\bk}$ is reduced, i.e., $\pic^0_{X/\bk}$ itself is an abelian variety.

\subsection{Numerical classes and dual classes} \label{subsec-cycle}

An {\it algebraic cycle of dimension $k$} (or a {\it $k$-cycle}) on $X$ is an element of the free abelian group $\Z_k(X)$ generated by closed subvarieties of $X$ of dimension $k$.
Instead of studying $\Z_k(X)$ directly, we work on the quotient groups of $\Z_k(X)$ by some equivalence relations.
The first one is rational equivalence (denoted by $\sim$;
see \cite[\S1.3]{Fulton98} for its precise definition).
The group of $k$-cycles modulo rational equivalence is called the {\it Chow group $\A_k(X)$}.
Its element is called a {\it $k$-cycle class}.
In \cite[Chapter~3]{Fulton98}, Fulton constructed the following Chern classes operations
$$c_i(\sE) \cap - \colon A_k(X) \to A_{k-i}(X),$$
for any vector bundle $\sE$ on $X$.
It thus induces to define the numerical equivalence on algebraic cycles as follows.

\begin{definition}[{cf. \cite[Definition~19.1]{Fulton98}}] \label{def-num-equiv}
An algebraic cycle $Z \in \Z_k(X)$ is called {\it numerically trivial} and denoted by $Z \num 0$, if
\begin{equation} \label{eqn-num}
P(\sE_I) \cap Z = 0
\end{equation}
for all homogeneous polynomials $P(\sE_I)$ of degree $k$ in Chern classes of vector bundles $\sE_i$ on $X$.
\end{definition}

Let $\N_k(X)_\bZ$ denote the quotient group of $\Z_k(X)$ modulo numerical equivalence, i.e.,
\[\N_k(X)_\bZ \coloneqq \Z_k(X) / \! \num \! .\]
It is known that $\N_k(X)_\bZ$ is a free abelian group of finite rank (cf. \cite[Example~19.1.4]{Fulton98}).
In this paper, we will consider the following finite-dimensional real vector space
\[\N_k(X) \coloneqq \N_k(X)_\bZ \otimes_\bZ \bR.\]
A {\it numerical class} is an element of $N_k(X)$.
Denote the numerical class of a real $k$-cycle $Z$ by $[Z]$.

Since our varieties may be singular, Chow's moving lemma is invalid so that the usual non-degenerate pairing $N_k(X) \times N_{n-k}(X) \to \bR$ does not exist.
Instead, it turns out to be quite useful to consider the abstract dual groups $Z^k(X)$ and $N^k(X)$ of $Z_k(X)$ and $N_k(X)$, respectively.
Also, $N^k(X)$ is formally identified with the group of homogeneous $\bR$-polynomials of degree $k$ in Chern classes of vector bundles on $X$ modulo numerical equivalence \eqref{eqn-num}.
A {\it numerical dual class} is an element of $N^k(X)$.
In particular, $N^1(X)$ is isomorphic to the {\it N\'eron--Severi space} $\NS_\bR(X)$ of $\bR$-Cartier divisors modulo numerical equivalence (see e.g. \cite[Example~2.1]{FL17}).
Usually, $[D]$ represents the numerical class of an $\bR$-Cartier divisor $D$ in $\N^1(X)$.
But by abuse of notation, we shall write $D \in \N^1(X)$.

By sending $[P(\sE_I)]$ to $P(\sE_I) \cap [X]$, we have the following natural map
\begin{equation*} \label{eqn-phi}
\phi \colon N^{k}(X) \to N_{n-k}(X).
\end{equation*}
When $k = 1$, this is just the numerical version of the cycle map from Cartier divisors to Weil divisors,
and it is injective as we shall see in Proposition~\ref{prop-HIT-R}.
In general, $\phi$ may not be an isomorphism for singular varieties (see e.g. \cite[Example~2.8]{FL17}).

\begin{convention}
When there is no confusion, we will sometimes use $\cdot$ instead of $\cap$ to denote the intersection of Chern classes (or Cartier divisors) with cycles and omit the cap product with $[X]$.
\end{convention}

\subsubsection{Numerical pushforward and pullback} \label{subsubsec-num-push-pull}

Let $\pi \colon X \to Y$ be a proper morphism of projective varieties.
Then we have a well-defined (numerical) pushforward homomorphism $\pi_* \colon N_k(X) \to N_k(Y)$
which is naturally induced from the proper pushforward $\pi_* \colon Z_k(X) \to Z_k(Y)$ of cycles by the following projection formula (cf. \cite[Theorem~3.2(c)]{Fulton98}):
\begin{equation} \label{eqn-proj-formula}
\pi_*(P(\pi^*\sE_I)\cap Z) = P(\sE_I) \cap \pi_*Z.
\end{equation}
It thus induces the (numerical) proper pullback $\pi^* \colon N^k(Y) \to N^k(X)$ which is dually defined by $(\pi_*)^\vee$.
When $\pi$ is surjective, it is known that $\pi_*$ is surjective (see e.g. \cite[Remark~2.13]{FL17}).
Moreover, \cite[Corollary~3.22]{FL17} asserts the surjectivity of $\pi_* \colon \Eff_k(X) \to \Eff_k(Y)$ for all $k$ (see \S\ref{subsubsec-pos-cone} for the meaning of the notation $\Eff_k$).
Hence dually, we also have the injectivity of $\pi^*$.
We refer to \cite[Chapter~19]{Fulton98} and \cite[\S 2.1]{FL17} for more details about the intersection theory on singular varieties.

\subsubsection{Positive cones} \label{subsubsec-pos-cone}

A $k$-cycle $Z \in Z_k(X)$ is {\it effective}, if all of its defining coefficients are non-negative.
The corresponding numerical class $[Z] \in N_k(X)$ is called an {\it effective numerical class}.
We denote by $\Eff_k(X)$ the closure of the convex cone generated by all effective numerical classes in $N_k(X)$.
It is called the {\it pseudo-effective cone} of $N_k(X)$.
The cone dual to $\Eff_k(X)$ in $N^k(X)$ is called the {\it nef cone} $\Nef^k(X)$,
which is a salient closed convex cone of {\it full dimension} (i.e., it generates $N^k(X)$ as a vector space).
An element of $\Nef^k(X)$ is called a {\it nef class}.
In particular, $\Nef^1(X)$ is the usual nef cone $\Nef(X)$ consisting of all nef $\bR$-Cartier divisor classes.
A numerical dual class $\gamma \in N^k(X)$ is {\it pseudo-effective} if $\phi(\gamma)$ is pseudo-effective in $N_{n-k}(X)$, i.e., $\gamma \cap [X] \in \Eff_{n-k}(X)$.
Denote by $\Eff^k(X)$ the closed cone of all pseudo-effective dual classes in $N^k(X)$.
Conventionally, the {\it ample cone $\Amp^k(X)$} (resp. the {\it big cone $\Bigcone_k(X)$}) is the interior of the nef cone $\Nef^k(X)$ (resp. the pseudo-effective cone $\Eff_k(X)$).

Besides, there is another salient closed convex cone in $N^k(X)$ of full dimension, the {\it pliant cone} $\PL^k(X)$;
see \cite[Definition~3.2]{FL17} for its precise definition.
When $k = 1$, it coincides with the nef cone $\Nef^1(X)$.
When $k \ge 2$, there exist nef classes which are not even pseudo-effective.
However, this pliant cone behaves much better than the nef cone.
We have the following relation (cf. \cite[Lemma~3.7]{FL17}):
$$\PL^k(X) \subset \Eff^k(X) \cap \Nef^k(X).$$
It should be mentioned that if $H_1, \ldots, H_k \in N^1(X)$ are ample classes, then $H_1 \cdots H_k \in N^k(X)$ is in the ample cone $\Amp^k(X)$ (actually, it is in the interior of the pliant cone $\PL^k(X)$; see \cite[Lemma~3.14]{FL17}), and $H_1 \cdots H_k \cap [X] \in N_{n-k}(X)$ is big (cf. \cite[Lemma~2.12]{FL17}).
We refer to Fulger--Lehmann \cite{FL17} for more properties about those positive cones.

\subsection{Weak numerical equivalences} \label{subsec-weak-num}

In this paper, we also need the following weak numerical equivalence on algebraic cycles (implicitly) introduced by Zhang \cite{Zhang09}.

\begin{definition} \label{def-weak-num}
An algebraic cycle $Z \in \Z_k(X)$ is called {\it weakly numerically trivial} and denoted by $Z \wnum 0$, if
$$P(\sL_I) \cap Z = 0$$
for all homogeneous polynomials $P(\sL_I)$ of degree $k$ in the first Chern classes of line bundles $\sL_i$ on $X$
(equivalently, it can be interpreted using the language of Cartier divisors as following:
\begin{equation*}
Z \wnum 0, \, \text{if } H_1 \cdots H_k \cdot Z = 0
\end{equation*}
for all ample divisors and hence for all Cartier divisors $H_i$ on $X$).
\end{definition}

\begin{remark} \label{remark-weak-num}
It is straightforward to see that weak numerical equivalence is indeed weaker than numerical equivalence by definitions.
On the other hand, we can also define the dual version of weak numerical equivalence on $Z^k(X)$.
More precisely, a dual cycle $\gamma \in \Z^k(X)$ is {\it weakly numerically trivial},
if $\gamma \cap [X]$ is weakly numerically trivial in the sense of Definition~\ref{def-weak-num}.
That is, $H_1 \cdots H_{n-k} \cdot (\gamma \cap [X]) = 0$ for all ample divisors $H_i$ on $X$.
Equivalently, by the commutativity of the cap product (cf. \cite[Theorem~3.2(b)]{Fulton98}),
$\gamma \cdot (H_1 \cdots H_{n-k} \cap [X]) = 0$ for all ample $\bR$-divisors $H_i$ on $X$.
Thus, on $Z^k(X)$, weak numerical equivalence is also weaker than numerical equivalence.
In particular, the product $D_1 \cdots D_k \in N^k(X)$ of $\bR$-Cartier divisors $D_1, \dots, D_k$ is weakly numerically trivial, if $D_1 \cdots D_k \cdot H_1 \cdots H_{n-k} = 0$
for all ample $\bR$-divisors $H_i$ on $X$.

Let $\W^k(X)$ denote the quotient group of $\Z^k(X)$ modulo weak numerical equivalence, i.e.,
\[\W^k(X) \coloneqq \Z^k(X) / \! \wnum \! .\]
Then $\W^k(X) = \N^k(X) / \! \wnum$ is a finite-dimensional real vector space since so is $N^k(X)$.
\end{remark}

Surprisingly, it turns out that for nef classes weak numerical equivalence coincides with numerical equivalence.
To see this, we need the following Bertini-type result
which asserts the existence of general hypersurface sections containing a given closed subvariety.
See \cite[Theorem~1]{KA79} or \cite[Theorem~8.1]{DR14} for a similar treatment.
For the convenience of the reader, here we give its proof.

\begin{lemma} \label{lemma-Bertini}
Let $Z$ be a closed subvariety of $X$ of codimension $\ge 1$.
Then for any ample divisor $H$ on $X$, there exist an integer $d\gg 0$ and an ample divisor $H_d$ in $|dH|$ such that
$H_d$ contains $Z$.
If we assume further that $X$ is normal and $\codim(Z, X)\ge 2$, then the above $H_d$ is a projective subvariety of $X$.
\end{lemma}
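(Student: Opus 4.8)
The plan is to prove the statement in two stages: first produce an ample divisor $H_d \in |dH|$ containing $Z$, then upgrade this to an irreducibility statement under the extra hypotheses. For the first stage, I would use a standard twisting-and-Serre-vanishing argument. Let $\cI_Z \subset \sO_X$ be the ideal sheaf of $Z$. Since $H$ is ample, for $d \gg 0$ the sheaf $\cI_Z(dH) = \cI_Z \otimes \sO_X(dH)$ is globally generated and $H^1(X, \cI_Z(dH)) = 0$, so the restriction map $H^0(X, \sO_X(dH)) \to H^0(Z, \sO_Z(dH))$ is surjective and there are ``many'' sections of $\sO_X(dH)$ vanishing along $Z$; indeed $H^0(X, \cI_Z(dH)) \ne 0$. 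Choosing $0 \ne s \in H^0(X, \cI_Z(dH))$ and letting $H_d \coloneqq \{s = 0\} \in |dH|$, we get an effective divisor in $|dH|$ containing $Z$, and it is ample since it is linearly equivalent to $dH$ with $d > 0$. (One should note that $H_d$ is automatically a proper nonempty divisor: it is nonzero because $s \ne 0$, and it is not all of $X$ because $Z$ has codimension $\ge 1$, so $s$ does not vanish on a dense open set — more precisely $\sO_X(dH)$ has no nonzero section vanishing on a nonempty open subset of the irreducible variety $X$.)

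For the second stage, assume $X$ is normal and $\codim(Z, X) \ge 2$. The goal is to choose $H_d$ to moreover be an irreducible (hence a projective subvariety, being also reduced as a general member). Here I would invoke a Bertini-type irreducibility theorem for the linear system $|\cI_Z(dH)| \subseteq |dH|$. The key point is that, with $d \gg 0$, the system $|\cI_Z(dH)|$ separates points of $X \setminus Z$ and is base-point free away from $Z$ (again by global generation of $\cI_Z(dH)$), so the associated rational map $X \dashrightarrow \bP^N$ is a morphism on $X \setminus Z$ that is birational onto its image — in fact, for $d$ large it is a closed embedding on $X \setminus Z$, since $\cI_Z(dH)$ separates points and tangent vectors there. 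Then a general member of $|\cI_Z(dH)|$ is, away from the base locus $Z$, a general hyperplane section of this quasi-projective image, hence irreducible by the classical Bertini irreducibility theorem (the image has dimension $n \ge 2$). Since $Z$ has codimension $\ge 2$, adding $Z$ back does not disconnect or add a component of the right dimension: a general $H_d$ is irreducible on $X \setminus Z$ and $\overline{H_d \setminus Z} = H_d$ because every component of $H_d$ has dimension $n - 1 > \dim Z$. Normality of $X$ together with $H_d$ being a general (hence reduced, and by Serre's criterion normal in codimension one) effective ample divisor containing $Z$ then gives that $H_d$ is a variety, i.e., a reduced irreducible closed subscheme of $X$.

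The main obstacle is the second stage: making the Bertini irreducibility argument work cleanly for the \emph{incomplete} linear system $|\cI_Z(dH)|$ rather than the full $|dH|$, and in \emph{arbitrary characteristic}, where the naive Bertini irreducibility statement can fail for non-base-point-free systems or inseparable maps. The way around this is precisely the observation above that for $d \gg 0$ the map defined by $|\cI_Z(dH)|$ is a closed immersion on the quasi-projective variety $X \setminus Z$ (not merely generically finite), so one is reduced to Bertini for hyperplane sections of a genuine quasi-projective variety of dimension $\ge 2$, which holds over any field; this is the content of the cited results \cite[Theorem~1]{KA79} and \cite[Theorem~8.1]{DR14}, so in the writeup I would either quote one of these directly or reproduce the short argument just sketched. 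The remaining points — ampleness of $H_d$, and the passage from ``irreducible away from $Z$'' to ``irreducible'' using $\codim(Z,X) \ge 2$ and normality — are routine.
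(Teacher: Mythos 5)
Your overall strategy coincides with the paper's: vanish along $Z$ via Serre vanishing and global generation of $\cI_Z(dH)$, then in the normal, $\codim \ge 2$ case use a Bertini irreducibility theorem applied to the incomplete linear system $\mathfrak{d}_d = |V_d|$ where $V_d = H^0(X, \cI_Z(dH))$, and finish by noting no component of a divisor can be contained in the codimension-$\ge 2$ set $Z \cup \Sing X$.

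However, there are two gaps that the paper's proof is carefully engineered to fill. The first and more serious is your assertion that for $d \gg 0$ the morphism $\iota_d\colon X\setminus Z \to \bP V_d$ is a closed immersion because \emph{$\cI_Z(dH)$ separates points and tangent vectors there}. This is true but not self-evident: $\iota_d$ is obtained from the embedding $X \hookrightarrow \bP H^0(X,\sO_X(dH))$ by a linear projection away from $Z$, and linear projections do not in general remain immersions on the complement of the center, so some uniform-in-$(x,y)$ separation statement has to be established. The paper sidesteps the pointwise separation argument entirely by a cleaner trick: it composes the product map $X\setminus Z \to \bP V_d \times \bP W$ (with $W = H^0(X,\sO_X(H))$ and $H$ very ample) with the Segre embedding, and then uses surjectivity of the multiplication map $V_d\otimes W \to V_{d+1}$ for $d\gg 0$ to conclude that $\iota_{d+1}$ is a locally closed immersion, which suffices (unramifiedness is what Jouanolou's Bertini needs). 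You would need to either reproduce something like this or give a genuine proof of the separation claim.

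The second, lesser gap is reducedness. You invoke ``general, hence reduced'' and appeal to Serre's criterion in passing, but Bertini only hands you smoothness (hence $(R_0)$) away from $Z\cup\Sing X$; reducedness additionally requires $(S_1)$, which the paper deduces from normality of $X$ via the chain ``$X$ is $(S_2)$, so a hypersurface section is $(S_1)$.'' This step should be made explicit, since $(R_0)$ alone does not give that $H_d$ is a variety.
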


\begin{proof}
After replacing $H$ by some large multiple, we may assume that $H$ is very ample and hence the complete linear system $|H|$ defines a closed immersion $\Phi_{|H|} \colon X \injmap \bP W$, where $W \coloneqq H^0(X, \sO_X(H))$.
Let $\sI_Z$ denote the ideal sheaf of $Z$ in $X$ which is a coherent sheaf.
Then we have an exact sequence of $\sO_X$-modules
$0 \to \sI_Z \to \sO_X \to \sO_Z \to 0$.
After tensoring with $\sO_X(dH)$, we have
$$0 \to \sI_Z \otimes \sO_X(dH) \to \sO_X(dH) \to \sO_Z(dH) \to 0.$$
By Serre's vanishing theorem \cite[Chapter~III, Proposition~5.3]{GTM52}, $H^i(X, \sI_Z \otimes \sO_X(dH)) = 0$ for $d\gg 0$ and $i>0$.
Hence we have the following exact sequence of finite-dimentional $\bk$-vector spaces
$$0 \to H^0(X, \sI_Z \otimes \sO_X(dH)) \to H^0(X, \sO_X(dH)) \to H^0(Z, \sO_Z(dH)) \to 0.$$
Let $V_d$ denote $H^0(X, \sI_Z \otimes \sO_X(dH))$.
Then it follows from the asymptotic Riemann--Roch formula that $\dim_{\bk} V_d \sim d^{\,\dim X}$ for $d\gg 0$ (cf. \cite[Example~1.2.19]{Lazarsfeld-I}).
So the linear system $\mathfrak{d}_d \subset |dH|$ on $X$ corresponding to the subspace $V_d$ is non-empty.
Also, note that $\sI_Z \otimes \sO_X(dH)$ is generated by global sections for $d\gg 0$ by \cite[Chapter~II, Theorem~5.17]{GTM52} (see also \cite[Theorem~1.2.6]{Lazarsfeld-I}).
Then the base locus of $\mathfrak{d}_d$ is exactly $Z$ for $d\gg 0$.
Thus $\mathfrak{d}_d$ defines a nontrivial morphism
$\iota_d \colon X\setminus Z \longrightarrow \bP V_d.$

We claim that $\iota_d$ is a locally closed immersion\footnote{A morphism is called a {\it locally closed immersion} (or an {\it immersion} for short), if it can be factored as a closed immersion followed by an open immersion.
To prove the claim, two useful facts will be implicitly used.
One is that a composition of immersions of schemes is an immersion (cf. \cite[\href{http://stacks.math.columbia.edu/tag/02V0}{Tag~02V0}]{stacks-project}).
Another one is as follows.
Let $X_1 \to X_2 \to X_3$ be morphisms of schemes.
If $X_1 \to X_3$ is an immersion, then $X_1 \to X_2$ is an immersion (see e.g. \cite[\href{http://stacks.math.columbia.edu/tag/07RK}{Tag~07RK}]{stacks-project}).} for $d\gg 0$.
In fact, since $X\setminus Z \injmap \bP W$ is locally closed, so is $X\setminus Z \to \bP V_d \times \bP W$.
Let $\bP V_d \times \bP W \injmap \bP(V_d \otimes W)$ be the Segre embedding.
Then the composite morphism $X\setminus Z \to \bP(V_d \otimes W)$ is locally closed.
Note that the natural map $V_d \otimes W \to V_{d+1}$ is surjective for $d\gg 0$ (see e.g. \cite[Theorem~1.8.5]{Lazarsfeld-I}).
So $X\setminus Z \to \bP(V_d \otimes W)$ factors through the closed immersion $\bP V_{d+1} \injmap \bP(V_d \otimes W)$.
Hence $\iota_{d+1}$ is locally closed and the claim follows.
Let $\Sing X$ denote the singular locus of $X$.
Then by applying Bertini's theorem \cite[Corollaire~6.11]{Jouanolou83} to the composite morphism $X\setminus (Z \cup \Sing X) \injmap \bP V_d$ which is still locally closed and hence unramified,
a general member $H_d \in \mathfrak{d}_d$ is smooth and irreducible outside $Z \cup \Sing X$.

By the assumption that $X$ is normal, $\codim(\Sing X, X)\ge 2$.
So if $\codim(Z, X)\ge 2$, then the above $H_d$ itself is irreducible and has the property $(R_0)$.
To see $H_d$ is reduced, it suffices to show that it also satisfies the Serre's condition $(S_1)$.
This follows from the $(S_2)$-ness of $X$ by Serre's criterion for normality and a general fact that a (locally) hypersurface section of an $(S_d)$-scheme is $(S_{d-1})$.
Therefore, our $H_d$ is integral and hence a projective subvariety of $X$.
We have proved Lemma~\ref{lemma-Bertini}.
\end{proof}

\begin{rmk}
The above $H_d$ may not be chosen so that it is still normal as one expects. Actually, in our proof, neither the condition $(R_1)$ nor Serre's condition $(S_2)$ is necessarily satisfied.
A sufficient condition to ensure the normality of $H_d$ could be that $\codim(Z, X) \ge 3$ and $X$ has properties $(R_2)$ and $(S_3)$.
\end{rmk}

\begin{lemma} \label{lemma-num-weak-num}
Let $\gamma \in \Nef^k(X)$ be a nef class in $N^k(X)$ such that $\gamma \not\num 0$.
Then $\gamma \cdot H_1 \cdots H_{n-k} > 0$ for all ample $\bR$-divisors $H_i$ on $X$.
In particular, $\gamma \not\wnum 0$.
\end{lemma}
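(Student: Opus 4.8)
The plan is to derive everything from three facts set up in \S\ref{subsec-cycle}--\S\ref{subsec-weak-num}: the nef cone $\Nef^k(X)$ is \emph{by definition} the dual cone of the pseudo-effective cone $\Eff_k(X) \subseteq N_k(X)$; a product of $n-k$ ample $\bR$-divisor classes capped with $[X]$ is big, hence an interior point of $\Eff_k(X)$ (this is \cite[Lemma~2.12]{FL17}, recalled in \S\ref{subsubsec-pos-cone}); and numerical triviality in $N^k(X)$ is exactly the vanishing of the pairing against all of $N_k(X)$, so a class pairing to zero with every class in $N_k(X)$ is numerically trivial.

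Concretely, I would set $\beta \coloneqq H_1 \cdots H_{n-k} \cap [X] \in N_k(X)$, which lies in the interior of $\Eff_k(X)$ by the second fact. Since $\gamma \in \Nef^k(X)$, the pairing $\gamma \cdot \alpha$ is $\ge 0$ for every $\alpha \in \Eff_k(X)$; in particular $\gamma \cdot \beta \ge 0$, and it only remains to rule out equality.

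The heart of the argument is a standard separation step. If $\gamma \cdot \beta = 0$, then, $\beta$ being an interior point of $\Eff_k(X)$, for each $\delta \in N_k(X)$ there is a real number $t > 0$ with $\beta + t\delta$ and $\beta - t\delta$ both in $\Eff_k(X)$; applying $\gamma \cdot (-) \ge 0$ to these two classes and using $\gamma \cdot \beta = 0$ gives $\gamma \cdot \delta = 0$. As $\delta$ ranges over all of $N_k(X)$, this forces $\gamma \num 0$, contradicting the hypothesis. Hence $\gamma \cdot H_1 \cdots H_{n-k} = \gamma \cdot \beta > 0$ for all ample $\bR$-divisors $H_i$ on $X$. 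The final assertion $\gamma \not\wnum 0$ is then immediate from the dual description of weak numerical triviality in Remark~\ref{remark-weak-num}, which says $\gamma \wnum 0$ holds precisely when $\gamma \cdot H_1 \cdots H_{n-k} = 0$ for all ample $\bR$-divisors $H_i$, a condition we have just ruled out.

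I do not expect a genuine obstacle here; everything reduces to convex-geometry bookkeeping against the cones in $N_k(X)$ and $N^k(X)$. The only points deserving care are that ``big'' really does mean ``in the interior of $\Eff_k(X)$'', that $\Nef^k(X)$ is exactly the dual cone of $\Eff_k(X)$ (and not of some larger cone), and that numerical triviality in $N^k(X)$ is tested by the pairing with $N_k(X)$, all of which are granted by the definitions of \S\ref{subsec-cycle} and by \cite{FL17}. One could alternatively avoid invoking bigness of $\beta$ by cutting $X$ with general members of $|dH_i|$ via Lemma~\ref{lemma-Bertini} and inducting on $n$, but the cone-duality argument above is shorter and cleaner.
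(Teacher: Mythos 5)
Your argument is correct, but it takes a genuinely different route from the paper's. The paper inducts on $\dim X$: after normalizing, it picks an irreducible $k$-cycle $Z$ with $\gamma \cdot Z > 0$, uses the Bertini-type Lemma~\ref{lemma-Bertini} to find a hypersurface section $D \in |dH_1|$ that is a projective subvariety containing $Z$, restricts $\gamma$ to $D$, and applies the induction hypothesis (the base case $\dim X = 2$ being the Hodge index theorem for surfaces). You instead observe that the lemma is precisely the dual of \cite[Lemma~2.12]{FL17} — the bigness of $H_1 \cdots H_{n-k} \cap [X]$ in $N_k(X)$ — and deduce it by a standard separation argument in the cone pairing $\Nef^k(X) = \Eff_k(X)^\vee$, noting that $\gamma \not\num 0$ for $\gamma \in N^k(X)$ is exactly nontriviality of the pairing against $N_k(X)$. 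Both are valid. Your proof is shorter, but it outsources all the geometry to \cite[Lemma~2.12]{FL17}, a moving-lemma-type result; the paper's proof is more self-contained and reuses the Bertini-type Lemma~\ref{lemma-Bertini}, which is introduced anyway for Lemma~\ref{lemma-HIT-Q} and Proposition~\ref{prop-HIT-R}. There is no circularity concern with your invocation of \cite[Lemma~2.12]{FL17}: the paper itself later cites it for the same bigness fact, in the proof of Lemma~\ref{lemma-equiv-dyn-deg}.
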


\begin{proof}
It suffices to consider the case that each $H_i$ is an ample Cartier divisor.
We shall prove this lemma by the induction on $\dim X$.
When $\dim X = 2$, it is true by the Hodge index theorem for surfaces.
Suppose that the lemma has been proved for any projective variety of dimension less than $n$.
Replacing $X$ by its normalization, we may assume that $X$ is normal.
By the definition of numerical equivalence \eqref{eqn-num}, there exists an integral $k$-cycle $Z$ on $X$ such that $\gamma \cdot Z \ne 0$ and hence $\gamma \cdot Z > 0$ as $\gamma\in \Nef^k(X)$.
If $k = n-1$, by Lemma~\ref{lemma-Bertini}, there exist an integer $d\gg 0$ and an ample divisor $D \in |dH_1|$ containing $Z$.
It follows that $\gamma \cdot dH_1 = \gamma \cdot D = \gamma \cdot ([Z] + \alpha') \ge \gamma \cdot [Z] > 0$, where $\alpha'$ is an effective class.
If $k \le n-2$, again by Lemma~\ref{lemma-Bertini}, we can choose $D \in |dH_1|$ for $d\gg 0$ as a projective subvariety of $X$ containing $Z$.
Let $i \colon D \injmap X$ be the closed immersion.
Then $i^*\gamma \in \Nef^k(D)$ and $i^*\gamma \cdot Z = \gamma \cdot i_*Z = \gamma \cdot Z > 0$ so that $i^*\gamma \not\num 0$.
Let $H'_s \coloneqq i^*H_s$ denote the restriction of the ample divisor $H_s$ to $D$ with $2\le s\le n-k$.
Then by the hypothesis induction, we have $i^*\gamma \cdot H'_2 \cdots H'_{n-k} > 0$.
On the other hand,
\begin{align*}
&\ \ i^*\gamma \cdot H'_2 \cdots H'_{n-k} = i^*\gamma \cdot i^*H_2 \cdots i^*H_{n-k} \cap [D] \\
&= \gamma \cdot H_2 \cdots H_{n-k} \cap i_*[D] = \gamma \cdot H_2 \cdots H_{n-k} \cap (D\cap [X]) \\
&= \gamma \cdot dH_1 \cdot H_2 \cdots H_{n-k} \cap [X] = d\gamma \cdot H_1 \cdot H_2 \cdots H_{n-k},
\end{align*}
where $i_*[D] = i_*i^*[X] = D \cap [X]$ (cf. \cite[Proposition~2.6]{Fulton98}). Thus the lemma follows.
\end{proof}

\subsection{Dynamical degrees} \label{subsec-dyn-deg}

We first recall the notion of dynamical degrees in complex dynamical systems. Given a compact K\"ahler manifold $M$ of dimension $n$, for any integer $0\le p\le n$, the $p$-th dynamical degree $d_p(f)$ of a holomorphic automorphism $f$ of $M$ is the spectral radius of the linear transformation $f^* \colon H^{p,p}(M, \bC) \to H^{p,p}(M, \bC)$.
It is known that the topological entropy $h_{\rm top}(f)$ of $f$ is then equal to $\max_{0\le p\le n} \log d_p(f)$ by the Gromov--Yomdin theorem (cf.~\cite[Th\'eor\`eme~2.1]{DS04}).
See also \cite[\S3]{Dinh12} for a survey on dynamical degrees.
Further, when $M$ is projective, it turns out that the first dynamical degree $d_1(f)$ is equal to the spectral radius of the natural pullback action $f^*|_{\NS_\bR(M)}$.

Back to our arbitrary characteristic setting, it was Esnault and Srinivas who first introduced two natural algebraic definitions of entropy in \cite{ES13} inspired by the above equality $d_1(f) = \rho(f^*|_{\NS_\bR(M)})$.
More precisely, they defined the {\it entropy} $h(g)$ (resp. the {\it algebraic entropy} $h_{\alg}(g)$) of an automorphism $g$ of $X$ to be the natural logarithm of the spectral radius of $g$ acting on the $\ell$-adic cohomology $H_{\et}^{\bullet}(X, \bQ_\ell)$ (resp. on the numerical Chow ring $\CH_{\rm num}^{\bullet}(X)$).
The equivalence of these two notions is still unknown except for smooth projective surfaces (cf.~\cite[\S6.3]{ES13}).
In this paper, we define the {\it $k$-th dynamical degree} by the natural pullback action $g^*$ on $N^k(X)$ for any integer $0\le k\le n$.
Namely,
\begin{equation*}
d_k(g) \coloneqq \rho\Big(g^*|_{\N^k(X)} \Big) = \max\Big\{|\lambda| : \lambda\textrm{ is an eigenvalue of } g^*|_{\N^k(X)} \Big\}.
\end{equation*}
Then the algebraic entropy $h_{\alg}(g) = \log \max\{d_k(g) : 0\le k\le n\}$.
It follows from Corollary~\ref{cor-log-concave} that $h_{\alg}(g) > 0$ if and only if $d_k(g) > 1$ for some (or equivalently, for all) $1\le k\le n-1$.

We show that $d_k(g)$ has an intersection-theoretic characterization as follows (see \cite[\S3]{Dinh12} for an analogue in complex dynamics).

\begin{lemma} \label{lemma-equiv-dyn-deg}
For any $0 \le k \le n$ and any ample $\bR$-divisor $H$ on $X$, we have
\begin{equation*}
d_k(g) = \lambda_k(g) \coloneqq \lim_{m\to \infty} \Big((g^m)^*H^k \cdot H^{n-k}\Big)^{1/m}.
\end{equation*}
In particular, the limit exists and does not depend on the choice of $H$.
\end{lemma}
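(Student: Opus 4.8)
The plan is to show that the sequence $a_m \coloneqq (g^m)^*H^k \cdot H^{n-k}$ grows like $d_k(g)^m$ up to sub-exponential factors, and then separately to verify independence of $H$.

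First I would establish the upper bound $\limsup_m a_m^{1/m} \le d_k(g)$. Fix a norm $\|\cdot\|$ on the finite-dimensional vector space $N^k(X)$. Since intersection with the fixed class $H^{n-k} \cap [X] \in N_{n-k}(X)$ defines a bounded linear functional on $N^k(X)$, there is a constant $C>0$ with $a_m = \big((g^m)^*H^k\big)\cdot H^{n-k} \le C\,\|(g^m)^*H^k\| \le C\,\|(g^*|_{N^k(X)})^m\|\,\|H^k\|$. By Gelfand's formula, $\|(g^*|_{N^k(X)})^m\|^{1/m} \to \rho(g^*|_{N^k(X)}) = d_k(g)$, which gives $\limsup_m a_m^{1/m} \le d_k(g)$.

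The reverse inequality is the heart of the matter, and I expect it to be the main obstacle. The idea is that $(g^m)^*H^k$ is a \emph{nef} class (pullback of a product of ample classes under an automorphism), and Lemma~\ref{lemma-num-weak-num} guarantees that a nonzero nef class in $N^k(X)$ pairs strictly positively against $H^{n-k}$ with $H$ ample; moreover this pairing controls the class from below. Concretely, one shows there is a constant $c>0$ such that $\gamma \cdot H^{n-k} \ge c\,\|\gamma\|$ for every $\gamma$ in the (closed, salient) nef cone $\Nef^k(X)$ — this follows because $\gamma \mapsto \gamma\cdot H^{n-k}$ is a strictly positive continuous functional on the nef cone by Lemma~\ref{lemma-num-weak-num} applied with $\gamma$ replaced by its restriction to a compact cross-section, using that $\Nef^k(X)$ is a salient closed convex cone of full dimension. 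Hence $a_m = (g^m)^*H^k \cdot H^{n-k} \ge c\,\|(g^m)^*H^k\|$. Now $(g^*|_{N^k(X)})^m$ preserves the nef cone, and a standard argument (e.g. applying the functional to the element realizing the operator norm on the nef cone, or passing to a suitable eigenvector/generalized eigenvector of $(g^*)^{-1}$ paired against nef classes) shows that $\|(g^m)^*H^k\|^{1/m} \to d_k(g)$: indeed the spectral radius of an automorphism of a salient cone equals the spectral radius of its restriction to the ambient space, and a fixed ample power $H^k$ lies in the interior of the nef cone, so its orbit norms track the top growth rate. Therefore $\liminf_m a_m^{1/m} \ge d_k(g)$.

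Combining the two bounds, the limit exists and equals $d_k(g)$. Finally, independence of the choice of ample $\bR$-divisor $H$ is automatic: the common value is $d_k(g) = \rho(g^*|_{N^k(X)})$, which makes no reference to $H$. Alternatively, for two ample classes $H, H'$ one may choose constants with $a H \le H' \le b H$ in the nef-cone ordering and sandwich the two sequences, recovering the same limit. I would present the argument with the cone-positivity input (the constant $c$ above) isolated as the key lemma, since that is where Lemma~\ref{lemma-num-weak-num} is genuinely used and where singularity of $X$ could otherwise cause trouble.
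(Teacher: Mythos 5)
Your proposal follows essentially the same route as the paper: spectral radius formula (Gelfand) for the upper bound, and cone positivity together with a Perron--Frobenius-type eigenvector for the lower bound. The paper's proof is also built around Lemma~\ref{lemma-num-weak-num} and the Birkhoff cone theorem (Theorem~\ref{theorem-Birkhoff-cone}), so the ingredients match. A few remarks on where your version drifts from, or is thinner than, the paper's.

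First, you work with the nef cone $\Nef^k(X)$ where the paper uses the pliant cone $\PL^k(X)$. This is a legitimate variation: both cones are salient, closed, convex, full-dimensional, and $g^*$-invariant, $H^k$ lies in the interior of both, and the key positivity $\upsilon_g \cdot H^{n-k} > 0$ follows from Lemma~\ref{lemma-num-weak-num} in either case. The paper likely chose $\PL^k$ because \cite[Lemma~3.14]{FL17} gives $H^k \in \operatorname{int}\PL^k(X)$ directly; either way the argument goes through.

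Second, and more substantively, the crux of your lower bound---the claim that $\|(g^m)^*H^k\|^{1/m} \to d_k(g)$ because ``a fixed ample power lies in the interior of the cone, so its orbit norms track the top growth rate''---is asserted as a ``standard argument'' rather than proved. This is precisely the step the paper makes concrete: apply Theorem~\ref{theorem-Birkhoff-cone} to $(\PL^k(X), N^k(X), g^*)$ to produce $\upsilon_g \in \PL^k(X)$ with $g^*\upsilon_g \num d_k(g)\,\upsilon_g$; since $H^k$ is interior, $c_0 H^k - \upsilon_g$ lies in the cone for $c_0 \gg 0$, hence so does $(g^m)^*(c_0 H^k - \upsilon_g)$, which pairs nonnegatively with $H^{n-k}$, giving $(g^m)^*H^k \cdot H^{n-k} \ge c_0^{-1} d_k(g)^m (\upsilon_g \cdot H^{n-k})$. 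Your ``passing to a suitable eigenvector/generalized eigenvector of $(g^*)^{-1}$'' and ``applying the functional to the element realizing the operator norm on the nef cone'' are not quite the right formulations (the operator norm need not be realized on the cone, and it is an eigenvector of $g^*$ itself, not its inverse, that one wants); the precise statement you need is the Birkhoff eigenvector comparison above.

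Third, a stylistic point: your intermediate inequality $a_m \ge c\,\|(g^m)^*H^k\|$ is an unnecessary detour. Once you have the Birkhoff eigenvector and the domination $c_0 H^k \ge \upsilon_g$ in the cone, pairing directly with $H^{n-k}$ (using $\upsilon_g \cdot H^{n-k} > 0$ from Lemma~\ref{lemma-num-weak-num}) gives the lower bound on $a_m$ without passing through the norm estimate. Also worth noting: the paper's choice of norm, $\|\gamma\| = \sum_i |\gamma \cdot \alpha_i|$ with $H^{n-k}\cap[X] \num \sum_i \alpha_i$ for pseudo-effective $\alpha_i$ spanning $N_k(X)$, is designed so that $\|\gamma\| = \gamma \cdot H^{n-k}$ exactly for nef $\gamma$; this makes the upper bound an equality of norms rather than an inequality up to a constant as in your version.

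So: same approach, no genuine gap in the sense of a wrong idea, but the lower-bound step in your sketch needs the explicit Perron--Frobenius eigenvector construction to be complete, which is exactly what the paper supplies.
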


\begin{proof}
We first define a norm on $N^k(X)$.
Indeed, by \cite[Lemma~2.12]{FL17}, we see that $H^{n-k} \cap [X]$ is big, i.e., it belongs to the interior of the pseudo-effective cone $\Eff_k(X)$.
We can choose pseudo-effective classes $\{\alpha_i\}$ that span $N_k(X)$ and such that $H^{n-k} \cap [X] \num \sum_i \alpha_i$ (see \cite[Proof of Corollary~3.16]{FL17}).
Then for any $\gamma \in N^k(X)$, the assignment $\gamma \mapsto \sum_i \abs{\gamma \cdot \alpha_i}$ is a norm on $N^k(X)$.
In particular, if $\gamma$ is a nef dual class, then $\norm{\gamma} = \sum_i \gamma \cdot \alpha_i = \gamma \cdot H^{n-k} \cap [X]$.
We endow $g^*|_{N^k(X)}$ the induced matrix norm.
Then by the spectral radius formula,
$$d_k(g) = \lim_{m\to \infty} \norm{(g^m)^*|_{N^k(X)}}^{1/m} \ge \limsup_{m\to \infty} \norm{(g^m)^*H^k}^{1/m} = \limsup_{m\to \infty} \Big((g^m)^*H^k \cdot H^{n-k}\Big)^{1/m}.$$

On the other hand, note that the pliant cone $\PL^k(X)$ is a salient closed convex cone in $N^k(X)$ of full dimension and preserved by $g^*|_{\N^k(X)}$ (cf. \cite[\S3]{FL17}).
So by applying the generalized Perron--Frobenious theorem to the triplet $(\PL^k(X), N^k(X), g^*|_{\N^k(X)})$,
there exists an eigenvector $\upsilon_g \in \PL^k(X)$ such that $g^*\upsilon_g \num d_k(g)\upsilon_g$ in $N^k(X)$ (see Theorem~\ref{theorem-Birkhoff-cone}).
We also notice that $H^k$ is in the interior of $\PL^k(X)$ (cf. \cite[Lemma~3.14]{FL17}).
Then so is $c_0H^k - \upsilon_g$ for some $c_0 \gg 0$.
This yields that $(g^m)^*(c_0H^k - \upsilon_g) \in \PL^k(X) \subset \Eff^k(X)$ for any $m$ and hence $(g^m)^*(c_0H^k - \upsilon_g)\cdot H^{n-k}\ge 0$.
We thus have
\begin{align*}
&\ \ \liminf_{m\to \infty} \Big((g^m)^*H^k \cdot H^{n-k}\Big)^{1/m} = \liminf_{m\to \infty} \Big((g^m)^*(c_0H^k) \cdot H^{n-k}\Big)^{1/m} \\
&\ge \liminf_{m\to \infty} \Big((g^m)^*\upsilon_g \cdot H^{n-k}\Big)^{1/m} = \lim_{m\to \infty} \Big(d_k^m(g)\upsilon_g \cdot H^{n-k}\Big)^{1/m} = d_k(g).
\end{align*}
It follows that $\lambda_k(g)$ is a well-defined limit which is equal to $d_k(g)$, and hence it is independent of the choice of $H$.
Thus we prove the lemma.
\end{proof}

Furthermore, we may replace the ample $\bR$-divisor $H$ in Lemma~\ref{lemma-equiv-dyn-deg} by a nef and big divisor.

\begin{lemma} \label{lemma-alt-def-deg}
For any $0 \le k \le n$ and any nef and big $\bR$-Cartier divisor $D$ on $X$, we have
$$d_k(g) = \lim_{m\to \infty} \Big((g^m)^*D^k \cdot D^{n-k}\Big)^{1/m}.$$
\end{lemma}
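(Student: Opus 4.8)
The plan is to reduce the statement to Lemma~\ref{lemma-equiv-dyn-deg} by sandwiching the nef and big divisor $D$ between two multiples of an ample $\bR$-Cartier divisor $H$, after modifying $D$ to make the comparison work in the positive cone $\Nef^k(X)$ (or, better, in the pliant cone $\PL^k(X)$). Fix an ample $\bR$-Cartier divisor $H$. Since $D$ is nef, we have $d_k(g) \le \lim_{m\to\infty}\big((g^m)^* D^k \cdot D^{n-k}\big)^{1/m}$ from the upper bound established exactly as in the proof of Lemma~\ref{lemma-equiv-dyn-deg}: the class $D^{n-k}\cap[X]$ lies in $\Eff_k(X)$ because $D$ is nef and big (it is even big by \cite[Lemma~2.12]{FL17} or its nef--big analogue), so we can build a norm on $N^k(X)$ out of pseudo-effective classes $\{\alpha_i\}$ with $D^{n-k}\cap[X]\num\sum_i\alpha_i$, and then the spectral radius formula gives $d_k(g)=\lim_m\norm{(g^m)^*|_{N^k(X)}}^{1/m}\ge\limsup_m\big((g^m)^*D^k\cdot D^{n-k}\big)^{1/m}$. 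So only the lower bound $d_k(g)\ge\lim_m\big((g^m)^*D^k\cdot D^{n-k}\big)^{1/m}$ needs work.

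For the lower bound I would run the Perron--Frobenius argument from Lemma~\ref{lemma-equiv-dyn-deg} again. Apply the generalized Perron--Frobenius theorem (Theorem~\ref{theorem-Birkhoff-cone}) to $(\PL^k(X), N^k(X), g^*|_{N^k(X)})$ to get an eigenvector $\upsilon_g\in\PL^k(X)$ with $g^*\upsilon_g\num d_k(g)\upsilon_g$. Because $D$ is nef and big, $D^k$ lies in the interior of the nef cone $\Nef^k(X)$; the point I need is that $D^k - \epsilon\,\upsilon_g$ stays in $\Eff^k(X)$ for $m$-fold pullbacks, equivalently that $c_0 D^k - \upsilon_g \in \Eff^k(X)$ for some $c_0\gg 0$ and that $\Eff^k(X)$ is $g^*$-invariant and closed. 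Bigness of $D$ lets me write $D \num A + E$ with $A$ ample $\bR$-Cartier and $E$ effective $\bR$-Cartier, so $D^k = (A+E)^k$ dominates a positive multiple of an ample product class modulo pseudo-effective classes; combining with $A^k$ being in the interior of $\PL^k(X)$ (so that $c_0 A^k - \upsilon_g$ is pliant, hence in $\Eff^k(X)$) gives $c_0 D^k - \upsilon_g \in \Eff^k(X)$ for $c_0\gg 0$. Then $(g^m)^*(c_0 D^k - \upsilon_g)\cdot D^{n-k}\ge 0$ since $(g^m)^*$ preserves $\Eff^k(X)$ and $D^{n-k}\cap[X]$ is pseudo-effective, whence
$$
\liminf_{m\to\infty}\Big((g^m)^*D^k\cdot D^{n-k}\Big)^{1/m}
\ge \liminf_{m\to\infty}\Big((g^m)^*\upsilon_g\cdot D^{n-k}\Big)^{1/m}
= \lim_{m\to\infty}\Big(d_k^m(g)\,\upsilon_g\cdot D^{n-k}\Big)^{1/m} = d_k(g),
$$
using that $\upsilon_g\cdot D^{n-k}\cap[X]>0$ because $\upsilon_g\in\PL^k(X)\subset\Nef^k(X)$ is nonzero and pairs strictly positively against the big class $D^{n-k}\cap[X]$ (here one invokes Lemma~\ref{lemma-num-weak-num} together with $D$ nef and big; approximate $D$ from below by ample $\bR$-divisors and pass to the limit, or note $D^{n-k}\cap[X]$ is big hence an interior point of $\Eff_k(X)$, so a nonzero nef dual class cannot annihilate it).

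The main obstacle I expect is precisely this strict positivity $\upsilon_g \cdot D^{n-k} > 0$: with $D$ merely nef and big rather than ample, the clean pairing identity $\norm{\gamma}=\gamma\cdot D^{n-k}\cap[X]$ used for ample $H$ is not immediate, and one must show a nonzero pliant (or nef) class pairs strictly positively with $D^{n-k}\cap[X]$. The cleanest route is to write $D = \lim_{j} A_j$ with $A_j$ ample $\bR$-divisors (possible since $D$ is nef, so $D + \tfrac1j H$ is ample), apply Lemma~\ref{lemma-num-weak-num} to each $A_j^{n-k}$, and take $j\to\infty$; bigness guarantees the limit $\upsilon_g\cdot D^{n-k}\cap[X]=\lim_j \upsilon_g\cdot A_j^{n-k}\cap[X]$ is still strictly positive rather than merely $\ge 0$, because $D^{n-k}\cap[X]$ being big means it dominates a positive multiple of some ample product class against which $\upsilon_g$ pairs positively by Lemma~\ref{lemma-num-weak-num}. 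Once that lemma is in hand the rest is a verbatim rerun of the proof of Lemma~\ref{lemma-equiv-dyn-deg} with $H$ replaced by $D$, so I would keep the write-up short and simply indicate the two modifications (bigness of $D^{n-k}\cap[X]$ for the norm, and Lemma~\ref{lemma-num-weak-num} for strict positivity).
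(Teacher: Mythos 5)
Your overall strategy (rerun Lemma~\ref{lemma-equiv-dyn-deg} with $D$ in place of an ample $H$, invoking Kodaira's lemma for the Perron--Frobenius step) is workable, but it is a genuinely harder route than the one the paper takes, and your write-up leaves the hard points as acknowledged gaps rather than resolving them. The paper's proof never touches the norm construction or Perron--Frobenius again: for the inequality $d_k(g)\ge\limsup$, it chooses $c_0\gg0$ with $c_0A-D$ ample, so that the elementary intersection inequality $(g^m)^*(c_0A)^k\cdot(c_0A)^{n-k}\ge(g^m)^*D^k\cdot D^{n-k}$ combined with Lemma~\ref{lemma-equiv-dyn-deg} applied to $c_0A$ gives the result; for $\liminf\ge d_k(g)$, it applies Kodaira's lemma $D\num H+E$ with $H$ ample and $E$ effective and uses nefness of $D$ and $(g^m)^*D$ to get $(g^m)^*D^k\cdot D^{n-k}\ge(g^m)^*H^k\cdot D^{n-k}\ge(g^m)^*H^k\cdot H^{n-k}$, then applies Lemma~\ref{lemma-equiv-dyn-deg} to $H$. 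Both bounds are thus reduced purely to the ample case by sandwiching, with no need for bigness of the dual cycle class $D^{n-k}\cap[X]$ or a Perron--Frobenius eigenvector.

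By contrast, your proposal needs two additional nontrivial facts that you assert but do not establish. First, that $D^{n-k}\cap[X]$ is a \emph{big} class in $N_k(X)$ (interior of $\Eff_k(X)$): this does not follow from \cite[Lemma~2.12]{FL17}, which is stated for ample classes, and a ``nef--big analogue'' is not in the paper; it can be proved by the iterative comparison $\gamma\cdot D^{n-k}\ge\gamma\cdot A^{n-k}>0$ for nonzero nef $\gamma$ (writing $D\num A+E$ and peeling off one factor of $E$ at a time, using that nef classes preserve pseudo-effectivity), but this is exactly the kind of work the paper avoids. Second, the strict positivity $\upsilon_g\cdot D^{n-k}\cap[X]>0$, which you flag yourself as ``the main obstacle'': your second proposed fix (using bigness of $D^{n-k}\cap[X]$) is the right one, but it again hinges on the unproved bigness claim, while your first proposed fix (approximate $D$ by ample $A_j$ and ``bigness guarantees the limit is strictly positive'') is not a proof. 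Two smaller issues: the inequality directions in your first paragraph are reversed (you write ``$d_k(g)\le\lim$'' and then derive ``$d_k(g)\ge\limsup$'', and call the remaining inequality ``the lower bound $d_k(g)\ge\lim$'' when in fact you have just shown that and what remains is $d_k(g)\le\liminf$); and the parenthetical claim that $D^k$ lies in the interior of $\Nef^k(X)$ for $D$ merely nef and big is not justified (and fortunately not used, since what your argument actually requires is $c_0D^k-\upsilon_g\in\Eff^k(X)$, which you obtain differently).
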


\begin{proof}
We first fix an ample divisor $A$ on $X$.
Then $c_0A - D$ is still ample for $c_0\gg 0$.
This yields that $(g^m)^*(c_0A)^k \cdot (c_0A)^{n-k} \ge (g^m)^*D^k \cdot D^{n-k}$ for any $m$.
Hence by Lemma~\ref{lemma-equiv-dyn-deg},
$$d_k(g) = \lim_{m\to \infty} \Big((g^m)^*(c_0A)^k \cdot (c_0A)^{n-k}\Big)^{1/m} \ge \limsup_{m\to \infty} \Big((g^m)^*D^k \cdot D^{n-k}\Big)^{1/m}.$$
For the other direction, let $D \num H + E$ be a numerical decomposition of the big $\bR$-Cartier divisor $D$, where $H$ is an ample $\bR$-divisor and $E$ is an effective $\bR$-Cartier divisor on $X$.
The existence of this decomposition is due to Kodaira's lemma (cf. \cite[Proposition~2.2.6]{Lazarsfeld-I}), which is still valid in positive characteristic since its proof depends only on the asymptotic Riemann--Roch formula \cite[Example~1.2.19]{Lazarsfeld-I} and Serre's vanishing theorem \cite[Chapter~III, Proposition~5.3]{GTM52}.
Then by the nefness of $D$, we have
$$(g^m)^*D^k \cdot D^{n-k} = (g^m)^*(H + E)^k \cdot D^{n-k} \ge (g^m)^*H^k \cdot D^{n-k} \ge (g^m)^*H^k \cdot H^{n-k}.$$
This yields that $$\displaystyle \liminf_{m\to \infty} \Big((g^m)^*D^k \cdot D^{n-k}\Big)^{1/m} \ge \liminf_{m\to \infty} \Big((g^m)^*H^k \cdot H^{n-k}\Big)^{1/m} = d_k(g)$$ by Lemma~\ref{lemma-equiv-dyn-deg}.
Hence the lemma follows.
\end{proof}

The lemma below shows that all dynamical degrees of an automorphism are equivalent to the same ones on its normalization or an equivariant resolution (if it exists) and hence are birational invariants.

Let $g$ be an automorphism of $X$.
A morphism $\pi: X \to Y$ is called {\it $g$-equivariant} if $\pi \circ g|_X = g|_Y \circ \pi$, i.e., the $g$-action on $X$ descends to a biregular (possibly non-faithful) action on $Y$ (denoted by $g|_Y$).

\begin{lemma} \label{lemma-inv-dyn-deg}
Let $\pi\colon X\to Y$ be a $g$-equivariant surjective morphism of projective varieties.
Then we have $d_k(g|_X) \ge d_k(g|_Y)$.
Suppose further that $\pi$ is generically finite. Then $d_k(g|_X) = d_k(g|_Y)$.
In particular, $g|_X$ is of positive entropy (resp. null entropy) if and only if so is $g|_Y$.
\end{lemma}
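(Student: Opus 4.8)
The strategy is to compare intersection numbers on $X$ and $Y$ via the projection formula together with the intersection-theoretic characterization of dynamical degrees proved in Lemma~\ref{lemma-equiv-dyn-deg} (or Lemma~\ref{lemma-alt-def-deg}). First I would fix an ample divisor $H_Y$ on $Y$ and set $D \coloneqq \pi^* H_Y$ on $X$. Since $\pi$ is surjective, $D$ is a nef $\bR$-Cartier divisor on $X$ (a pullback of an ample class is nef). For the inequality $d_k(g|_X) \ge d_k(g|_Y)$ I would compare $(g|_X^m)^* D^k \cdot D^{n-k}$ on $X$ with $(g|_Y^m)^* H_Y^k \cdot H_Y^{n-k}$ on $Y$: by $g$-equivariance $(g|_X^m)^* D = (g|_X^m)^* \pi^* H_Y = \pi^*(g|_Y^m)^* H_Y$, and then by the projection formula \eqref{eqn-proj-formula}
$$
(g|_X^m)^* D^k \cdot D^{n-k} = \pi^*\big((g|_Y^m)^* H_Y^k \cdot H_Y^{n-k}\big) = \big(\deg \pi \text{ or } 0\big)\cdot (g|_Y^m)^* H_Y^k \cdot H_Y^{n-k},
$$
where $\pi_* \pi^*(\cdot) = (\deg\pi)(\cdot)$ when $\dim X = \dim Y$ and $\pi_*[X] = 0$ (hence the product vanishes) when $\dim X > \dim Y$; in the latter case I would instead pick an ample $H_X$ on $X$, dominate $H_Y$-pullbacks by multiples of $H_X$, and argue that the growth rate of $(g|_X^m)^* (\pi^* H_Y)^k \cdot H_X^{n-k}$ is bounded below by $d_k(g|_Y)$ using that $\pi_*$ is a $g$-equivariant surjection on numerical groups (so an eigenvector realizing $d_k(g|_Y)$ in $\PL^k(Y)$ pulls back to a nonzero class in $\Eff^k(X)$ whose $(g|_X^*)$-orbit grows at least like $d_k(g|_Y)^m$). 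Since $D$ is nef but possibly not big, I would be careful to use Lemma~\ref{lemma-alt-def-deg} only after checking bigness, or circumvent it by the eigenvector argument above.

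\textbf{The generically finite case.} When $\pi$ is generically finite (so $\dim X = \dim Y$ and $D = \pi^* H_Y$ is nef and big), the reverse inequality $d_k(g|_X) \le d_k(g|_Y)$ follows cleanly: by the computation above, $(g|_X^m)^* D^k \cdot D^{n-k} = (\deg\pi)\cdot (g|_Y^m)^* H_Y^k \cdot H_Y^{n-k}$, so taking $m$-th roots and letting $m\to\infty$, Lemma~\ref{lemma-alt-def-deg} applied on $X$ (with the nef and big divisor $D$) gives $d_k(g|_X) = \lim_m \big((\deg\pi)\cdot (g|_Y^m)^* H_Y^k \cdot H_Y^{n-k}\big)^{1/m} = d_k(g|_Y)$ by Lemma~\ref{lemma-equiv-dyn-deg} on $Y$, since the constant $(\deg\pi)^{1/m} \to 1$. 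Combined with the already-established $d_k(g|_X)\ge d_k(g|_Y)$, this gives equality.

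\textbf{Conclusion and main obstacle.} The final ``in particular'' assertion about positive/null entropy is then immediate: $g|_X$ is of positive entropy iff $d_1(g|_X) > 1$; when $\pi$ is generically finite, $d_1(g|_X) = d_1(g|_Y)$ so the two conditions are literally the same (note we only need the $k=1$ case, where $N^1$ is $\NS_\bR$, so no subtlety about $\phi$ being non-injective arises). I expect the main obstacle to be the case $\dim X > \dim Y$ in establishing $d_k(g|_X) \ge d_k(g|_Y)$: there the naive projection-formula identity degenerates to $0 = 0$, and one must instead run the Perron--Frobenius / eigenvector argument, making sure that (i) $\pi^*\colon N^k(Y)\to N^k(X)$ is injective and $g$-equivariant — which holds by \S\ref{subsubsec-num-push-pull}, using surjectivity of $\pi_*\colon\Eff_k(X)\to\Eff_k(Y)$ from \cite[Corollary~3.22]{FL17} — and (ii) the pullback of the pliant eigenvector $\upsilon_{g|_Y}$ lands in a $g|_X^*$-invariant cone on $X$ on which one can detect the growth rate $d_k(g|_Y)^m$ by pairing against $H_X^{n-k}$. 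This is a routine but slightly delicate positivity bookkeeping that mirrors the proof of Lemma~\ref{lemma-equiv-dyn-deg}.
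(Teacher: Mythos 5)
Your proposal takes essentially the same route as the paper: for the inequality, pull back the Perron--Frobenius eigenvector $L_Y\in\PL^k(Y)$ via the injective, $g$-equivariant $\pi^*\colon N^k(Y)\to N^k(X)$; for the generically finite case, use Lemma~\ref{lemma-alt-def-deg} with the nef and big divisor $\pi^*H_Y$ and the projection formula (the $(\deg\pi)^{1/m}\to 1$ bookkeeping you mention is right and is implicitly swallowed in the paper's displayed computation). One small streamlining the paper uses that you can adopt: for $d_k(g|_X)\ge d_k(g|_Y)$ there is no need to pair $\pi^*L_Y$ against $H_X^{n-k}$ or to invoke the intersection-theoretic characterization at all — since $\pi^*L_Y$ is a nonzero eigenvector of $g^*|_{N^k(X)}$ with eigenvalue $d_k(g|_Y)$, the bound follows immediately from the definition of $d_k$ as a spectral radius, independently of whether $\dim X>\dim Y$. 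This avoids the "growth rate detected by pairing" step and the detour through the degenerate projection-formula case entirely.
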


\begin{proof}
By the generalized Perron--Frobenious theorem (see Theorem~\ref{theorem-Birkhoff-cone}),
there exists an eigenvector $L_Y \in \PL^k(Y)$ of $g^*|_{\N^k(Y)}$ such that
$g^* L_Y \num d_k(g|_Y) L_Y$ in $\N^k(Y)$.
We observe that $\pi^* \colon \N^k(Y) \to \N^k(X)$ is injective and hence $L_X \coloneqq \pi^*L_Y \not\num 0$ in $\N^k(X)$; see \cite[Remark~2.13]{FL17} for the surjectivity of $\pi_*$.
Then by the projection formula, we have
$$g^*L_X = g^*\pi^*L_Y = \pi^*g^*L_Y \num d_k(g|_Y) \pi^*L_Y = d_k(g|_Y) L_X \text{ in } \N^k(X).$$
It follows readily that $d_k(g|_X) \ge d_k(g|_Y)$.

Thanks to Lemma~\ref{lemma-alt-def-deg}, the second part of the lemma follows from the fact that
the pullback of any nef and big divisor under a generically finite morphism is still nef and big.
More precisely, we choose a nef and big divisor (e.g., an ample divisor) $D_Y$ on $Y$.
Then so is $D_X\coloneqq \pi^*D_Y$ on $X$.
Further, we have
\begin{align*}
d_k(g|_X) &= \lim_{m\to \infty} \Big((g^m)^*D_X^k \cdot D_X^{n-k}\Big)^{1/m} = \lim_{m\to \infty} \Big((g^m)^*(\pi^*D_Y)^k \cdot (\pi^*D_Y)^{n-k}\Big)^{1/m} \\
&= \lim_{m\to \infty} \Big(\pi^*\big((g^m)^*D_Y^k \cdot D_Y^{n-k} \big)\Big)^{1/m} = \lim_{m\to \infty} \Big((g^m)^*D_Y^k \cdot D_Y^{n-k}\Big)^{1/m} = d_k(g|_Y).
\end{align*}
We thus prove the lemma.
\end{proof}

\subsection{Higher-dimensional Hodge index theorem} \label{subsec-HIT}

Let $X$ be a projective variety of dimension $n \ge 2$,
and $H_1, \ldots, H_{n-1}$ ample $\bR$-divisors on $X$.
We define a symmetric form $q_X$ on $\N^1(X)$ by
$$q_X(D_1, D_2) \coloneqq -D_1\cdot D_2 \cdot H_1 \cdots H_{n-2}.$$
Let
$$\Sigma(H_1 \cdots H_{n-1}) \coloneqq \{D \in \N^1(X) : D \cdot H_1 \cdots H_{n-1} = 0 \},$$
which is a hyperplane in $\N^1(X)$. Sometimes, it is also denoted as $\Sigma$ for short.

The following higher-dimensional Hodge index theorem is the main result of this subsection.\footnote{We remark that in the proof of Theorem~\ref{theorem-Tits} or Lemma~\ref{lemma-2.3I}, we only need the positive semi-definiteness of the quadratic form $q_X$.}
With the help of the Bertini-type result (i.e., Lemma~\ref{lemma-Bertini}), we provide a characteristic-free proof.
This thus generalizes \cite[Lemma~3.2]{Zhang16-TAMS} to arbitrary characteristic.

\begin{proposition} \label{prop-HIT-R}
Let $X$ be a projective variety of dimension $n \ge 2$.
Let $H_1, \ldots, H_{n-1}$ be ample $\bR$-divisors.
Then the quadratic form $q_X$ is positive definite on $\Sigma(H_1 \cdots H_{n-1})$.
In particular, let $D$ be an $\bR$-Cartier divisor such that $D \cdot H_1 \cdots H_{n-1} = D^2 \cdot H_1 \cdots H_{n-2} = 0$. Then $D \num 0$ (numerically).
\end{proposition}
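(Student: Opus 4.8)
The plan is to establish the positive definiteness by induction on $n=\dim X$; the last assertion is then immediate, as it says exactly that there is no $D\in\Sigma$ with $D\not\num 0$ and $q_X(D,D)=0$. Before starting I would reduce to the case that $X$ is normal: for the normalization $\nu\colon X^\nu\to X$, which is finite and birational, the map $\nu^*\colon\N^1(X)\to\N^1(X^\nu)$ is injective, sends ample classes to ample classes, and preserves all intersection numbers (projection formula, using $\nu_*[X^\nu]=[X]$), so the statement for $(X^\nu,\nu^*H_1,\dots,\nu^*H_{n-1})$ and $\nu^*D$ implies it for $X$. The base case $n=2$ is the Hodge index theorem for normal projective surfaces: there $q_X(D_1,D_2)=-D_1\cdot D_2$ and $\Sigma$ is the $q_X$-orthogonal complement of the ample class $H_1$, on which $-D^2$ is positive definite. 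Throughout the induction I will apply the (full) inductive hypothesis freely to any projective variety of smaller dimension — in particular to hypersurface sections, which it reduces internally to their normalizations — and rely on Lemma~\ref{lemma-Bertini} to produce such sections.

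For the inductive step $n\ge 3$ I would first dispose of the rational case. Assume $H_1,\dots,H_{n-1}$ are ample $\bQ$-divisors; rescaling changes neither $\Sigma$ nor whether $q_X$ is positive definite on it, so we may take them integral. Let $D\in\Sigma$ with $D\not\num 0$, and pick an irreducible curve $C\subset X$ with $D\cdot C\neq 0$. Since $n\ge 3$ we have $\codim(C,X)\ge 2$, so by Lemma~\ref{lemma-Bertini} there is, for some $d\gg 0$, an integral projective subvariety $Y\in|dH_{n-2}|$ containing $C$. Put $D'\coloneqq D|_Y$ and $H_i'\coloneqq H_i|_Y$. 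Then $D'\not\num 0$ in $\N^1(Y)$ since $D'\cdot C=D\cdot C\neq 0$; and, using $Y\sim dH_{n-2}$ together with the projection formula, $D'\cdot H_1'\cdots H_{n-3}'\cdot H_{n-1}'=d\,(D\cdot H_1\cdots H_{n-1})=0$ and $-(D')^2\cdot H_1'\cdots H_{n-3}'=d\,q_X(D,D)$. Applying the inductive hypothesis to $Y$ (dimension $n-1$) with the $n-2$ ample divisors $H_1',\dots,H_{n-3}',H_{n-1}'$ yields $q_Y(D',D')>0$, hence $q_X(D,D)>0$. This proves the proposition when all $H_i$ are rational.

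To reach arbitrary ample $\bR$-divisors I would proceed in two steps. For positive semi-definiteness: given $D\in\Sigma$, choose ample $\bQ$-divisors $H_i^{(m)}\to H_i$ and set $D^{(m)}\coloneqq D-\eps_m A_0$ for a fixed ample class $A_0$, where $\eps_m\coloneqq (D\cdot H_1^{(m)}\cdots H_{n-1}^{(m)})/(A_0\cdot H_1^{(m)}\cdots H_{n-1}^{(m)})\to 0$; then $D^{(m)}$ lies in the corresponding $\Sigma^{(m)}$ and $D^{(m)}\to D$, so the rational case gives $q_X^{(m)}(D^{(m)},D^{(m)})\ge 0$, and letting $m\to\infty$ gives $q_X(D,D)\ge 0$. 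Finally, to upgrade to positive definiteness over $\bR$, suppose $D\in\Sigma$ with $q_X(D,D)=0$ but $D\not\num 0$. Since $q_X|_\Sigma$ is positive semi-definite, $D$ lies in its radical; as $q_X(D,H_{n-1})=0$ and $\N^1(X)=\Sigma\oplus\bR H_{n-1}$ (because $H_{n-1}\notin\Sigma$), this forces $D\cdot E\cdot H_1\cdots H_{n-2}=0$ for every $\bR$-Cartier divisor $E$. Now fix an irreducible curve $C$ with $D\cdot C\neq 0$; for each ample integral divisor $A$, Lemma~\ref{lemma-Bertini} gives an integral $Y_A\in|d_A A|$ containing $C$. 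Running the same slicing argument on $Y_A$ — where $D|_{Y_A}$ lies in the relevant $\Sigma$ on $Y_A$ precisely because $D\cdot A\cdot H_1\cdots H_{n-2}=0$ — and invoking positive definiteness on $Y_A$ (dimension $n-1$, known by induction) produces $D^2\cdot A\cdot H_1\cdots H_{n-3}<0$ for every such $A$. But $A\mapsto D^2\cdot A\cdot H_1\cdots H_{n-3}$ is a linear functional on $\N^1(X)$ negative on all ample integral classes, hence $\le 0$ on the whole cone $\Nef(X)$, while it vanishes at the interior point $H_{n-2}$ since $D^2\cdot H_1\cdots H_{n-2}=-q_X(D,D)=0$; a linear functional nonpositive on a full-dimensional cone and vanishing at an interior point is identically zero — a contradiction. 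Hence $D\num 0$, completing the induction.

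The main obstacle I anticipate is exactly the passage from $\bQ$- to $\bR$-divisors: the slicing that drives the induction needs the hypersurface section to be numerically proportional to $H_{n-2}$ so that the relation $D\cdot H_1\cdots H_{n-1}=0$ is inherited by the section, and such a section is available only when $H_{n-2}$ is rational. Approximation then yields only semi-definiteness, and recovering strictness over $\bR$ requires the separate radical/interior-point argument above, which is where the most care is needed. A secondary point is that Lemma~\ref{lemma-Bertini} must substitute for resolution of singularities (which one would use over $\bC$): it supplies an integral hypersurface section through a prescribed curve, to which the inductive hypothesis — stated, after the normalization reduction, for arbitrary lower-dimensional projective varieties — applies directly.
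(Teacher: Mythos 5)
Your proposal is correct and follows essentially the same route as the paper: first prove the $\bQ$-case by induction on $n$ using Lemma~\ref{lemma-Bertini} to slice along a hypersurface through a curve witnessing $D\not\num 0$, then pass to the limit for semi-definiteness over $\bR$, and finally upgrade to strict positivity via the Cauchy--Schwarz/radical argument combined with another Bertini slicing. The only (minor) departures are cosmetic: you make the limit step explicit via the perturbation $D^{(m)}=D-\eps_m A_0$, and you close the strictness step with a linear functional vanishing at the interior point $H_{n-2}$ of $\Nef(X)$, where the paper instead writes (a multiple of) $H_{n-2}$ as a positive combination $\sum a_i A_i$ of ample prime divisors through $C$ and sums the resulting strict inequalities.
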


We first prove Proposition~\ref{prop-HIT-R} in the case that $H_1, \ldots, H_{n-1}$ are ample $\bQ$-divisors
(although it has been well-known in literature already; see e.g. \cite[Expos\'e~XIII, Corollaire~7.4]{SGA6}).

\begin{lemma} \label{lemma-HIT-Q}
Let $X$ be a projective variety of dimension $n \ge 2$.
Let $H_1, \ldots, H_{n-1}$ be ample $\bQ$-divisors.
Then the quadratic form $q_X$ is positive definite on $\Sigma(H_1 \cdots H_{n-1})$.
\end{lemma}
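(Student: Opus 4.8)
The plan is to argue by induction on $n=\dim X$, cutting $X$ down by general ample hypersurface sections and using the Bertini-type result of Lemma~\ref{lemma-Bertini} to keep control of the singular geometry. First I would make two harmless reductions. Replacing each $H_i$ by a common multiple $mH_i$ only scales $q_X$ by the positive constant $m^{\,n-2}$ and leaves $\Sigma(H_1\cdots H_{n-1})$ unchanged as a subset of $\N^1(X)$, so one may assume the $H_i$ are ample Cartier divisors. Next, passing to the normalization $\nu\colon X^\nu\to X$: the $\nu^*H_i$ remain ample, $\nu^*$ identifies $\Sigma(H_1\cdots H_{n-1})$ with $\Sigma(\nu^*H_1\cdots\nu^*H_{n-1})$ and $q_X$ with $q_{X^\nu}$ (by the projection formula, as $\nu$ is birational), and $\nu^*D\num 0$ on $X^\nu$ forces $D\num 0$ on $X$ (every integral curve on $X$ lifts through the finite surjection $\nu$); so one may assume $X$ is normal. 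The base case $n=2$ is then the classical Hodge index theorem for a projective surface, i.e.\ $D^2<0$ for every nonzero $D$ with $D\cdot H_1=0$, which holds in arbitrary characteristic --- see \cite[Chapter~V, Theorem~1.9]{GTM52} in the smooth case, the general case following by resolution of surface singularities (available in every characteristic) or by Mumford's intersection theory on normal surfaces.

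For the inductive step assume $n\ge 3$ and that the lemma holds for all projective varieties of dimension $<n$. Fix $D\in\Sigma(H_1\cdots H_{n-1})$, and for $d\gg 0$ choose, by a Bertini argument as in the proof of Lemma~\ref{lemma-Bertini}, an integral member $S\in|dH_1|$; it is a projective variety of dimension $n-1$, and writing $\iota\colon S\injmap X$ for the inclusion, the restrictions $\iota^*H_2,\dots,\iota^*H_{n-1}$ are ample on $S$. Since $\iota_*[S]=dH_1\cap[X]$ in $\A_{n-1}(X)$, the projection formula yields $\iota^*D\cdot\iota^*H_2\cdots\iota^*H_{n-1}=d\,(D\cdot H_1\cdots H_{n-1})=0$ and $q_S(\iota^*D)=d\,q_X(D)$, where $q_S$ is the form on $\N^1(S)$ built from the ample divisors $\iota^*H_2,\dots,\iota^*H_{n-2}$. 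By the inductive hypothesis $q_S$ is positive semi-definite on the hyperplane $\Sigma(\iota^*H_2\cdots\iota^*H_{n-1})$, which contains $\iota^*D$; hence $q_X(D)\ge 0$. This already gives the positive semi-definiteness of $q_X$ on $\Sigma(H_1\cdots H_{n-1})$, which is in fact all that is used elsewhere in the paper (cf.\ the footnote to Proposition~\ref{prop-HIT-R}).

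The hard part will be upgrading this to positive definiteness: if $D\in\Sigma(H_1\cdots H_{n-1})$ satisfies $q_X(D)=0$, one must deduce $D\num 0$, and here the containment form of Lemma~\ref{lemma-Bertini} is essential. Let $C\subset X$ be an arbitrary integral curve; since $X$ is normal and $\codim(C,X)=n-1\ge 2$, Lemma~\ref{lemma-Bertini} produces $d\gg 0$ and an \emph{integral} $S\in|dH_1|$ with $C\subset S$. Applying the computation above to this particular $S$, we get $\iota^*D\in\Sigma(\iota^*H_2\cdots\iota^*H_{n-1})$ on $S$ and $q_S(\iota^*D)=d\,q_X(D)=0$, so by the inductive hypothesis (positive definiteness on $S$) we obtain $\iota^*D\num 0$ on $S$, whence $D\cdot C=\iota^*D\cdot C=0$ by the projection formula. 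As $C$ was arbitrary, $D\num 0$ on $X$, which closes the induction. Apart from this step, the only care required lies in the intersection-theoretic bookkeeping over singular bases (the projection formula, $\iota_*[S]=dH_1\cap[X]$, and compatibility under normalization) and in the singular base case; alternatively, after clearing denominators one may simply invoke \cite[Expos\'e~XIII, Corollaire~7.4]{SGA6}.
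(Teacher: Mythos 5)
Your proposal is correct and follows essentially the same route as the paper's proof: induction on dimension, reduction to the normalization, and the Bertini-type result (Lemma~\ref{lemma-Bertini}) to cut $X$ by an integral ample hypersurface section containing a given curve $C$, then apply the inductive hypothesis and the projection formula to deduce $D\cdot C=0$. The only (cosmetic) differences are that you separate positive semi-definiteness from definiteness into two passes while the paper does both in one pass (by assuming $D^2\cdot H_1\cdots H_{n-2}\ge 0$ and deriving $D\cdot C=0$ directly), and you cut with $H_1$ where the paper cuts with $H_{n-2}$.
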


\begin{proof}
We shall prove the lemma by the induction on $n$.
When $n = 2$, it is the usual Hodge index theorem for surfaces.
Suppose that $n \ge 3$ and the lemma holds in dimension $n - 1$.
It suffices to show that so is the case in dimension $n$.
Namely, for any $\bR$-Cartier divisor $D$ on $X$ such that $D \cdot H_1 \cdots H_{n-1} = 0$, then either $D^2 \cdot H_1 \cdots H_{n-2} < 0$, or $D \num 0$.
Suppose that $D^2 \cdot H_1 \cdots H_{n-2} \ge 0$.
Let $C$ be any irreducible curve on $X$.
We shall prove that $D \cdot C = 0$.
Replacing $X$ by its normalization $\nu\colon X^\nu \to X$ which is a finite morphism, we may assume that $X$ is normal by the projection formula.
Then by Lemma~\ref{lemma-Bertini}, there exists a hypersurface section $H'$ in $|dH_{n-2}|$ with $d\gg 0$ such that $H'$ is a projective variety containing $C$ (since the curve $C$ has codimension at least $2$ in $X$).
Now we have
\begin{align*}
0 &= D \cdot H_1 \cdots H_{n-3} \cdot dH_{n-2} \cdot H_{n-1} \\
&= D \cdot H_1 \cdots H_{n-3} \cdot H' \cdot H_{n-1} \\
&= D|_{H'} \cdot H_1|_{H'} \cdots H_{n-3}|_{H'} \cdot H_{n-1}|_{H'}.
\end{align*}
However, by the induction hypothesis, $q_{H'}$ is positive definite on $\Sigma(H_1|_{H'} \cdots H_{n-3}|_{H'} \cdot H_{n-1}|_{H'})$.
This yields that either
$$D^2 \cdot H_1 \cdots H_{n-3} \cdot dH_{n-2} = D^2 \cdot H_1 \cdots H_{n-3} \cdot H' = (D|_{H'})^2 \cdot H_1|_{H'} \cdots H_{n-3}|_{H'} < 0,$$
or $D|_{H'} \num 0$ (numerical equivalence for $H'$).
The first case does not happen by our previous assumption.
While the last (numerical) equality implies that $D \cdot C = D|_{H'} \cdot C = 0$, since $H'$ contains the curve $C$.
We thus complete the proof of Lemma~\ref{lemma-HIT-Q}.
\end{proof}

\begin{proof}[Proof of Proposition \ref{prop-HIT-R}]
By passing to the limit in Lemma~\ref{lemma-HIT-Q}, the quadratic form $q_X$ is positive semi-definite on $\Sigma$, i.e.,
for any $\bR$-Cartier divisor $D$ on $X$ such that $D \cdot H_1 \cdots H_{n-1} = 0$, then either $D^2 \cdot H_1 \cdots H_{n-2} \le 0$, or $D \num 0$.
Therefore, we only need to show (by the induction on $\dim X$) that if the case $D^2 \cdot H_1 \cdots H_{n-2} = 0$ happens, then $D \num 0$.

When $X$ is a surface, this is true by the usual Hodge index theorem.
Suppose that this assertion holds in dimension $n-1$.
Replacing $X$ by its normalization $\nu\colon X^\nu \to X$, we may assume that $X$ is normal by the projection formula.
Suppose to the contrary that $D \not\equiv 0$.
Then there exists an irreducible curve $C$ on $X$ such that $D \cdot C \neq 0$.
On the other hand, since $q_X$ is positive semi-definite on $\Sigma$, by the Cauchy--Schwarz inequality, for any $\bR$-Cartier divisor $D'$ whose class is in $\Sigma$, we have
$$\abs{q_X(D, D')}^2 \le q_X(D, D) \cdot q_X(D', D') = 0.$$
Hence $q_X(D, D') = 0$.
We also have $q_X(D, H_{n-1}) = 0$ since $D \in \Sigma$.
Note that the class of the ample $\bR$-divisor $H_{n-1}$ is not in $\Sigma$.
Thus $q_X(D, D') = 0$ for any $\bR$-Cartier divisor $D'$ since $H_{n-1}$ and $\Sigma$ span the whole $\N^1(X)$.

Write $H_{n-2} = \sum a_i A_i$ with $a_i \in \bR_{>0}$ and ample prime divisors $A_i$.
Then for each $A_i$, by Lemma~\ref{lemma-Bertini} we may assume that it is a hypersurface section containing $C$ (after replacing $H_{n-2}$ by some multiple).
By the previous discussion, we have $q_X(D, A_i) = 0$ for any $i$.
Hence
$$D|_{A_i} \cdot H_1|_{A_i} \cdots H_{n-2}|_{A_i} = D \cdot A_i \cdot H_1 \cdots H_{n-2} = 0.$$
Thus it follows from the induction hypothesis that for each $i$ either
$$D^2 \cdot H_1 \cdots H_{n-3} \cdot A_i = (D|_{A_i})^2 \cdot H_1|_{A_i} \cdots H_{n-3}|_{A_i} < 0,$$
or $D|_{A_i} \num 0$.
But the latter case cannot happen since $D|_{A_i} \cdot C = D \cdot C \neq 0$.
Therefore,
$$D^2 \cdot H_1 \cdots H_{n-3} \cdot H_{n-2} = D^2 \cdot H_1 \cdots H_{n-3} \cdot \sum a_i A_i < 0.$$
This contradicts with $D^2 \cdot H_1 \cdots H_{n-2} = 0$.
We have completed the proof of Proposition~\ref{prop-HIT-R}.
\end{proof}

As a corollary, we obtain the log-concavity of dynamical degrees.

\begin{corollary} \label{cor-log-concave}
The function $k \mapsto \log d_k(g)$ is concave in $k$. Namely,
$$d_k^2(g) \ge d_{k-1}(g) d_{k+1}(g) \, \text{ for any } \, 1\le k\le n-1.$$
In particular, $d_k(g) \le d_1^k(g)$ for any $0\le k\le n$, and there are two integers $0\le r\le s \le n$ such that
$$1 = d_0(g) < \cdots < d_r(g) = \cdots = d_s(g) > \cdots > d_n(g) = 1.$$
\end{corollary}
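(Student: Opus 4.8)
I will reduce the log-concavity inequality $d_k^2(g)\ge d_{k-1}(g)\,d_{k+1}(g)$ to a Khovanskii--Teissier type inequality for a pair of ample divisors, which is an immediate consequence of the positive semi-definiteness of $q_X$ on $\Sigma$ furnished by Proposition~\ref{prop-HIT-R}, and then pass to the limit via the intersection-theoretic formula of Lemma~\ref{lemma-equiv-dyn-deg}.

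First I fix an ample $\bR$-divisor $H$ on $X$ and, for each $m\ge 1$, set $D\coloneqq (g^m)^*H$. As $g$ is an automorphism, $D$ is again ample and $(g^m)^*H^j=D^j$ for all $j$. I then apply Proposition~\ref{prop-HIT-R} to the $(n-1)$-tuple of ample divisors $H_1=\cdots=H_{k-1}=D$, $H_k=\cdots=H_{n-1}=H$, so that the form $q_X(E_1,E_2)=-E_1\cdot E_2\cdot D^{k-1}\cdot H^{n-k-1}$ is positive semi-definite on $\Sigma=\{E\in\N^1(X): E\cdot D^{k-1}\cdot H^{n-k}=0\}$. Decomposing $D=\lambda H+E_0$ with $E_0\in\Sigma$, one finds $\lambda=(D^k\cdot H^{n-k})/(D^{k-1}\cdot H^{n-k+1})>0$, and expanding the inequality $q_X(E_0,E_0)\ge 0$ and simplifying via $\lambda\,D^{k-1}\cdot H^{n-k+1}=D^k\cdot H^{n-k}$ yields
\[
\big(D^k\cdot H^{n-k}\big)^2\;\ge\;\big(D^{k-1}\cdot H^{n-k+1}\big)\big(D^{k+1}\cdot H^{n-k-1}\big).
\]
Rewriting each factor with $D^j=(g^m)^*H^j$, the three terms are precisely $(g^m)^*H^k\cdot H^{n-k}$, $(g^m)^*H^{k-1}\cdot H^{n-(k-1)}$ and $(g^m)^*H^{k+1}\cdot H^{n-(k+1)}$. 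Taking $m$-th roots, letting $m\to\infty$ and invoking Lemma~\ref{lemma-equiv-dyn-deg} at the exponents $k-1,k,k+1$ gives $d_k^2(g)\ge d_{k-1}(g)\,d_{k+1}(g)$ for every $1\le k\le n-1$.

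For the remaining assertions I would first record, again by Lemma~\ref{lemma-equiv-dyn-deg}, that $d_0(g)=\lim_m(H^n)^{1/m}=1$ and $d_n(g)=\lim_m\big(((g^m)^*H)^n\big)^{1/m}=1$, the latter since an automorphism preserves the top self-intersection number. Writing $\delta_k\coloneqq\log d_k(g)-\log d_{k-1}(g)$, the inequality just proved reads $\delta_1\ge\delta_2\ge\cdots\ge\delta_n$. Telescoping from $\log d_0(g)=0$ gives $\log d_k(g)=\sum_{j=1}^k\delta_j\le k\delta_1=k\log d_1(g)$, i.e. $d_k(g)\le d_1^k(g)$ for all $0\le k\le n$. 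Since $(\delta_j)_{1\le j\le n}$ is non-increasing it is strictly positive on an initial segment $\{1,\dots,r\}$, zero on $\{r+1,\dots,s\}$ and strictly negative on $\{s+1,\dots,n\}$ for some $0\le r\le s\le n$; combined with $d_0(g)=d_n(g)=1$ this is exactly the chain $1=d_0(g)<\cdots<d_r(g)=\cdots=d_s(g)>\cdots>d_n(g)=1$.

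The one step that requires genuine care is the first: checking that Proposition~\ref{prop-HIT-R} applies when some of the $H_i$ coincide (here the repeated divisor $D=(g^m)^*H$), and that the algebraic rearrangement of $q_X(E_0,E_0)\ge 0$ indeed produces the displayed Khovanskii--Teissier inequality in the homogeneous bidegree $(k,n-k)$, including the boundary cases $k=1$ and $k=n-1$. Once that is in hand, the limit passage, the end normalizations and the elementary combinatorics of a concave sequence are routine.
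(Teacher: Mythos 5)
Your proposal is correct and takes essentially the same approach as the paper: both apply Proposition~\ref{prop-HIT-R} with the choice $H_1=\cdots=H_{k-1}=(g^m)^*H$, $H_k=\cdots=H_{n-1}=H$, extract the Khovanskii--Teissier inequality $\big((g^m)^*H^k\cdot H^{n-k}\big)^2\ge\big((g^m)^*H^{k-1}\cdot H^{n-k+1}\big)\big((g^m)^*H^{k+1}\cdot H^{n-k-1}\big)$ (which the paper phrases as the reverse Cauchy--Schwarz bound $q_X((g^m)^*H,H)^2\ge q_X(H,H)\,q_X((g^m)^*H,(g^m)^*H)$), and then pass to the limit via Lemma~\ref{lemma-equiv-dyn-deg}. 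You merely carry out explicitly, via the decomposition $D=\lambda H+E_0$ with $E_0\in\Sigma$, the algebra that the paper leaves implicit, and the endpoint normalizations $d_0(g)=d_n(g)=1$ and the elementary concave-sequence combinatorics match.
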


\begin{proof}
It follows readily from Proposition~\ref{prop-HIT-R} by choosing $H_1 = \cdots = H_{k-1} = (g^m)^*H$ and $H_k = \cdots = H_{n-1} = H$, where $H$ is an ample divisor.
More precisely, we have
$$q_X\Big((g^m)^*H, H\Big)^2 \ge q_X(H, H) \cdot q_X\Big((g^m)^*H, (g^m)^*H\Big).$$
See also \cite[Corollary~2.2 and Proposition~3.6]{Dinh12} for related results.
\end{proof}

\subsection{Quasi-nef sequences} \label{subsec-quasi-nef}

The notion of quasi-nef sequences was initially introduced by Zhang \cite{Zhang09}.

\begin{definition} \label{def-quasi-nef-seq}
Let $X$ be a projective variety of dimension $n\ge 2$.
Given a positive integer $s\le n$.
A {\it quasi-nef sequence of length $s$} consists of non-zero divisor classes $D_1, \ldots, D_s \in \N^1(X)$ satisfying:
\begin{enumerate}[(i)]
\item $D_1$ is nef, and
\item for every $2\le r \le s$, there are nef classes $D_{r, k}\in \Nef(X)$ such that
$$0 \not \num D_1 \cdots D_r \num \lim_{k \to \infty} D_1 \cdots D_{r-1} \cdot D_{r, k} \ \text{ in } \N^{r}(X).$$
\end{enumerate}
\end{definition}

It should be noted that $D_1 \cdots D_r$ is actually contained in the nef cone $\Nef^r(X)$ for each $r$;
see \S\ref{subsubsec-pos-cone} the definition of nef cone $\Nef^r(X)$.
The following two lemmas give us key properties of quasi-nef sequences which play an essential role in proving our main theorems.

\begin{lemma} \label{lemma-2.3I}
Let $D_1, \dots, D_s$ be a quasi-nef sequence of length $s\le n$, and $H_{s+1}, \dots, H_n$ ample $\bR$-divisors.
Then the following assertions hold.
\begin{enumerate}[{\em (1)}]
\item $D_1 \cdots D_s \cdot H_{s+1} \cdots H_n > 0$.
In particular, $D_1 \cdots D_s \not \wnum 0$.
\item Suppose that $s\le n-1$.
Then the symmetric form
$$q_X(M_1, M_2) \coloneqq -M_1\cdot M_2 \cdot D_1 \cdots D_{s-1} \cdot H_{s+1} \cdots H_{n-1}$$
is positive semi-definite on the hyperplane $\Sigma \coloneqq \Sigma(D_1 \cdots D_{s-1} \cdot H_{s+1} \cdots H_{n})$.
\end{enumerate}
\end{lemma}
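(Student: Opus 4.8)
The plan is to prove both assertions simultaneously by induction on the length $s$, using Proposition~\ref{prop-HIT-R} and the defining limit property of a quasi-nef sequence at each stage. For the base case $s = 1$: assertion (1) is exactly Lemma~\ref{lemma-num-weak-num} applied to the nef class $D_1 \in \Nef^1(X)$ (note $D_1 \not\num 0$ by definition), and assertion (2) is the positive semi-definiteness part of Proposition~\ref{prop-HIT-R} applied to the ample divisors $H_2, \dots, H_{n-1}$ (the footnote remark notes only semi-definiteness is needed here). For the inductive step, I would assume the lemma holds for quasi-nef sequences of length $\le s-1$ and prove it for a quasi-nef sequence $D_1, \dots, D_s$ of length $s$.

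For assertion (1) in the inductive step: by definition of the quasi-nef sequence, $D_1 \cdots D_s \num \lim_{k\to\infty} D_1 \cdots D_{s-1} \cdot D_{s,k}$ with each $D_{s,k} \in \Nef(X)$, and $D_1 \cdots D_s \not\num 0$. The sequence $D_1, \dots, D_{s-1}$ is quasi-nef of length $s-1$, so by the inductive hypothesis (2), the form $q_X(M_1, M_2) = -M_1 \cdot M_2 \cdot D_1 \cdots D_{s-2} \cdot H_{s+1} \cdots H_{n-1}$ — or rather the appropriately indexed variant — is positive semi-definite on the corresponding hyperplane; more directly, I would use inductive hypothesis (1) together with a limiting argument: since $D_1 \cdots D_{s-1} \cdot D_{s,k} \cdot H_{s+1} \cdots H_n \ge 0$ for each $k$ (as all factors are nef), the limit $D_1 \cdots D_s \cdot H_{s+1} \cdots H_n \ge 0$ is nonnegative. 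To upgrade this to strict positivity, I would argue that $D_1 \cdots D_s$ is a nonzero nef class in $\Nef^s(X)$ (the remark after Definition~\ref{def-quasi-nef-seq} notes this), so $D_1 \cdots D_s \not\num 0$, and then apply Lemma~\ref{lemma-num-weak-num} directly to the nef class $\gamma \coloneqq D_1 \cdots D_s \in \Nef^s(X)$ with the ample divisors $H_{s+1}, \dots, H_n$; this gives $D_1 \cdots D_s \cdot H_{s+1} \cdots H_n > 0$, hence $D_1 \cdots D_s \not\wnum 0$.

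For assertion (2) in the inductive step (assuming $s \le n-1$): I need $q_X(M,M) = -M^2 \cdot D_1 \cdots D_{s-1} \cdot H_{s+1} \cdots H_{n-1} \ge 0$ for all $M$ in the hyperplane $\Sigma$. The natural approach is to pass to a general hypersurface section. Using the limit $D_1 \cdots D_{s-1} \num \lim_k D_1 \cdots D_{s-2} \cdot D_{s-1,k}$ together with Lemma~\ref{lemma-Bertini}, I would reduce — after normalizing $X$ and cutting down by ample divisors $H_{s+1}, \dots, H_{n-1}$ via general members containing relevant curves — to a statement on a lower-dimensional variety where Proposition~\ref{prop-HIT-R} applies, or alternatively approximate $D_1 \cdots D_{s-1}$ by nef products and invoke semi-definiteness of the Hodge-index form at each finite stage $k$, then pass to the limit (semi-definiteness is a closed condition). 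The main obstacle is this last step: making the reduction to Proposition~\ref{prop-HIT-R} clean, because the "coefficient" $D_1 \cdots D_{s-1} \cdot H_{s+1} \cdots H_{n-1}$ is only a nef class, not obviously of the form realized by restricting to a smooth or even normal subvariety. I expect the resolution to mirror Zhang's original argument: write each $D_{s-1,k}$ (or $H_i$) using Lemma~\ref{lemma-Bertini} to produce projective subvarieties, apply the inductive Hodge-index statement on those subvarieties where the restricted form is semi-definite, and then take the limit in $k$ using continuity of the intersection pairing, exploiting that $q_X(M,M) \ge 0$ survives the limit even though strict definiteness may be lost.
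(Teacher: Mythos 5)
Your proposal is essentially correct and, once stripped of the extra scaffolding, matches the paper's proof. For (1), the paper's argument is exactly your final version: $D_1 \cdots D_s$ is a non-trivial nef class in $\Nef^s(X)$ (as noted immediately after Definition~\ref{def-quasi-nef-seq}), so Lemma~\ref{lemma-num-weak-num} gives strict positivity directly, with no induction on $s$. For (2), the route you label the ``alternative'' --- approximate $D_1 \cdots D_{s-1} \in \N^{s-1}(X)$ by products of nef, hence (after an $\eps$-perturbation) ample, classes using the defining limit in Definition~\ref{def-quasi-nef-seq}, apply Proposition~\ref{prop-HIT-R} at each finite stage, and pass to the limit using that positive semi-definiteness is a closed condition --- is exactly what the paper does; the hypersurface-section reduction and the ``main obstacle'' you attach to it are not part of the paper's argument and can simply be dropped. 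The one detail worth making explicit (which the paper does) is why $\Sigma$ is a genuine hyperplane: $D_1, \dots, D_{s-1}$ is itself a quasi-nef sequence of length $s-1$, so assertion (1) applied to it gives $D_1 \cdots D_{s-1} \cdot H_{s+1} \cdots H_n > 0$; this nonvanishing is also what lets you perturb $M \in \Sigma$ into the approximating hyperplanes when taking the limit.
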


\begin{proof}
(1) It follows readily from Lemma~\ref{lemma-num-weak-num} by the nefness of $D_1 \cdots D_s$.

(2) The case $s = 1$ follows directly from Proposition~\ref{prop-HIT-R}.
By passing to the limit, we obtain the positive semi-definiteness of the symmetric form $q_X$ defined by $n-2$ classes $D_1, \dots, D_{s-1}, H_{s+1}, \dots, H_{n-1}$ on $\Sigma$.
Note that $D_1, \dots, D_{s-1}$ is a quasi-nef sequence of length $s-1$ and hence $D_1 \cdots D_{s-1} \cdot H_{s+1} \cdots H_{n} \not \num 0$ by the previous assertion (1).
Thus $\Sigma$ is indeed a hyperplane in $\N^1(X)$.
\end{proof}

\begin{lemma}[{cf. \cite[Corollaire~3.5]{DS04}}] \label{lemma-2.3II}
Let $D_1, \dots, D_{s-1}, D_s$ and $D_1, \dots, D_{s-1}, D'_s$ be two quasi-nef sequences of length $s\le n-1$. Then the following assertions hold.
\begin{enumerate}[{\em (1)}]
\item Suppose that $D_1 \cdots D_s \cdot D'_s \wnum 0$. Then there exists a unique non-zero real number $b$ such that $D_1 \cdots D_{s-1} \cdot (D_s + b D'_s) \wnum 0$.
\item Suppose that $g\colon X \to X$ is an automorphism such that
$$g^*(D_1 \cdots D_{s-1} \cdot D_s) \wnum \lambda D_1 \cdots D_s \text{ and } g^*(D_1 \cdots D_{s-1} \cdot D'_s) \wnum \lambda' D_1 \cdots D_{s-1} \cdot D'_s$$
with positive real numbers $\lambda \ne \lambda'$.
Then $D_1 \cdots D_{s-1} \cdot D_s \cdot D'_s \not\wnum 0$.
\end{enumerate}
\end{lemma}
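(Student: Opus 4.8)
The plan is to prove (1) first and then deduce (2) formally. Write $\alpha:=D_1\cdots D_s$ and $\alpha':=D_1\cdots D_{s-1}\cdot D'_s$; both lie in $\Nef^s(X)$, and since $D_1,\dots,D_{s-1},D_s$ and $D_1,\dots,D_{s-1},D'_s$ are quasi-nef sequences of length $s$, we have $\alpha\not\num 0$, $\alpha'\not\num 0$ and, by Lemma~\ref{lemma-2.3I}(1), $\alpha\not\wnum 0$, $\alpha'\not\wnum 0$. If $D_s$ and $D'_s$ are proportional, say $D_s=tD'_s$ with $t\neq 0$, then $D_1\cdots D_{s-1}(D_s+bD'_s)\wnum 0$ reads $(t+b)\,\alpha'\wnum 0$, which (using $\alpha'\not\wnum 0$) holds exactly for the nonzero value $b=-t$; so assume from now on that $D_s,D'_s$ are linearly independent, and set $\Pi:=\langle D_s,D'_s\rangle\subset\N^1(X)$.

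Fix a product $\omega$ of $n-s-1$ ample $\bR$-divisors and let $B_\omega(M,N):=M\cdot N\cdot D_1\cdots D_{s-1}\cdot\omega$ be the corresponding symmetric form on $\N^1(X)$. By Lemma~\ref{lemma-2.3I}(2) (applied to the quasi-nef sequence $D_1,\dots,D_{s-1},D_s$), $-B_\omega$ is positive semi-definite on a hyperplane, so $B_\omega$ has at most one positive eigenvalue; since moreover $B_\omega(A,A)=A^2\cdot\omega\cdot D_1\cdots D_{s-1}>0$ for $A$ ample by Lemma~\ref{lemma-num-weak-num}, $B_\omega$ has exactly one positive eigenvalue. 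The crux is then to check three facts: (a) $B_\omega(D_s,D'_s)=D_1\cdots D_s\cdot D'_s\cdot\omega=0$ by the hypothesis of (1), since a product of $n-s-1$ ample divisors is exactly what tests $\wnum$-triviality of a class in $\N^{s+1}(X)$; (b) $B_\omega(D_s,D_s)\ge 0$: writing $\alpha=\lim_k D_1\cdots D_{s-1}\cdot N_k$ with $N_k$ nef divisors (from the defining property of the quasi-nef sequence; when $s=1$, $\alpha=D_1$ is itself nef), one gets $B_\omega(D_s,D_s)=\alpha\cdot D_s\cdot\omega=\lim_k\alpha\cdot(N_k\cdot\omega)\ge 0$, because $\alpha\in\Nef^s(X)$ and $N_k\cdot\omega\cap[X]\in\Eff_s(X)$ (a nef divisor times a product of ample divisors is a limit of big classes); (c) likewise $B_\omega(D'_s,D'_s)\ge 0$. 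From (a)--(c), $B_\omega(xD_s+yD'_s,xD_s+yD'_s)=x^2B_\omega(D_s,D_s)+y^2B_\omega(D'_s,D'_s)\ge 0$, i.e. $B_\omega$ is positive semi-definite on the plane $\Pi$; since the positive part of $B_\omega$ is one-dimensional, a short linear-algebra argument ($\Pi$ meets the negative-semidefinite hyperplane complementary to the positive eigenline, and any isotropic vector there lies in the radical) shows that $\Pi$ meets the radical $R_\omega:=\{v\in\N^1(X):B_\omega(v,\cdot)\equiv 0\}$ in a line.

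Now $B_\omega(D_s,M)=\alpha\cdot\omega\cdot M$ and $B_\omega(D'_s,M)=\alpha'\cdot\omega\cdot M$ are both strictly positive for $M$ ample (again by Lemma~\ref{lemma-num-weak-num}), so neither $D_s$ nor $D'_s$ lies in $R_\omega$; hence the line $\Pi\cap R_\omega$ is spanned by some $xD_s+yD'_s$ with $x,y\neq 0$, which forces the functionals $M\mapsto\alpha\cdot\omega\cdot M$ and $M\mapsto\alpha'\cdot\omega\cdot M$ on $\N^1(X)$ to be proportional: $\alpha'\cdot\omega\cdot M=c(\omega)\,\alpha\cdot\omega\cdot M$ for all $M$, with $c(\omega)>0$. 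Equivalently, for every product $\Omega$ of $n-s$ ample divisors (for which $\alpha\cdot\Omega>0$) the ratio $(\alpha'\cdot\Omega)/(\alpha\cdot\Omega)$ is unchanged when any single factor of $\Omega$ is replaced by another ample divisor; as the ample cone is connected, this ratio is a single constant $c>0$, i.e. $\alpha'\wnum c\,\alpha$. Therefore $b:=-1/c$ is a nonzero real number with $D_1\cdots D_{s-1}(D_s+bD'_s)=\alpha+b\,\alpha'\wnum 0$; and it is unique, because any two such $b$ differ by a scalar annihilating $D_1\cdots D_{s-1}\cdot D'_s$ under $\wnum$, which is impossible since that class is $\not\wnum 0$ (Lemma~\ref{lemma-2.3I}(1)). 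This proves (1).

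For (2), suppose to the contrary that $D_1\cdots D_{s-1}\cdot D_s\cdot D'_s\wnum 0$. Then the hypothesis of (1) holds, so there is a nonzero $b$ with $\alpha\wnum -b\,\alpha'$. Applying $g^*$, which preserves $\wnum$ because the pullback of an ample divisor by an automorphism is ample, and using $g^*\alpha\wnum\lambda\,\alpha$ and $g^*\alpha'\wnum\lambda'\,\alpha'$, we obtain $\lambda\,\alpha\wnum -b\lambda'\,\alpha'$; multiplying $\alpha\wnum -b\,\alpha'$ by $\lambda$ and comparing gives $(\lambda-\lambda')\,\alpha'\wnum 0$, forcing $\lambda=\lambda'$ since $\alpha'\not\wnum 0$ and $b\neq 0$ — a contradiction. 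The main obstacle is the middle paragraph: arranging that $B_\omega$ is positive semi-definite on the plane $\Pi$. There, the hypothesis of (1) removes the off-diagonal term, the quasi-nef structure (approximation by nef divisors, together with the positivity of nef classes against big ones) forces the diagonal entries to be non-negative, and the Hodge-index bound of Lemma~\ref{lemma-2.3I}(2) leaves room for only one positive eigenvalue of $B_\omega$. The subsequent passage from a single $\omega$ to genuine weak numerical proportionality $\alpha'\wnum c\,\alpha$ is then a soft connectedness argument.
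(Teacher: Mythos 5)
Your argument is correct and is essentially the paper's proof (itself an adaptation of Dinh--Sibony's Corollaire 3.5), up to cosmetic changes: the sign flip from $q_X$ to $B_\omega=-q_X$, the explicit treatment of the collinear case, spelling out the quasi-nef approximation to justify $B_\omega(D_s,D_s)\ge 0$, and phrasing the output as a weak-numerical proportionality $\alpha'\wnum c\,\alpha$ rather than exhibiting a single primitive class in $F\cap\Sigma$. The only place you are too terse is the parenthetical ``any isotropic vector there lies in the radical,'' which is \emph{false} for a general negative-semidefinite hyperplane (e.g.\ for $\mathrm{diag}(1,-1,-1)$ and the hyperplane $x_1=x_2$); what saves the argument is that the hyperplane $\Sigma$ furnished by Lemma~\ref{lemma-2.3I}(2) is exactly the $B_\omega$-orthogonal complement $H_n^{\perp_{B_\omega}}$ of an ample class $H_n$, so an isotropic $v\in\Pi\cap\Sigma$ is $B_\omega$-orthogonal to all of $\Sigma$ (Cauchy--Schwarz) and also to $H_n$ (since $v\in\Sigma=H_n^{\perp}$), hence to all of $\N^1(X)$ --- you should make this explicit rather than appeal to a generic ``linear-algebra argument.''
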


\begin{proof}
(1) We follow the proof of \cite[Corollaire~3.5]{DS04}.
We may assume that $D_s$ and $D'_s$ are not collinear in $\N^1(X)$.
Denote $F$ the plane spanned by $D_s$ and $D'_s$. Then $F$ is an $\bR$-vector subspace of $\N^1(X)$ of dimension $2$.
Let $H_{s+1}, \dots, H_{n-1}$ be ample $\bR$-divisors.
We claim that there exists a non-zero class $\widetilde D \in F$, unique up to a multiple scale, such that
\begin{equation} \label{eqn-DS3.5}
\widetilde D \cdot D_1 \cdots D_{s-1} \cdot H_{s+1} \cdots H_{n-1} \wnum 0 \ \text{ in } \W^{n-1}(X).
\end{equation}

We first fix an ample $\bR$-divisor $H_n$.
Then the symmetric form $q_X$ defined in Lemma~\ref{lemma-2.3I}(2) is positive semi-definite on the hyperplane $\Sigma$.
Note that by Lemma~\ref{lemma-2.3I}(1), we have $q_X(D_s, D_s) \le 0$ and $q_X(D'_s, D'_s) \le 0$.
We also have $q_X(D_s, D'_s) = 0$ by the assumption.
Hence the form $q_X$ is negative semi-definite on $F$.
We then deduce that $q_X$ vanishes on $F \cap \Sigma$.
The intersection $F \cap \Sigma$ is not zero by the dimension reasoning.
Hence there exists a non-zero class $\widetilde D \in F \cap \Sigma$ such that $q_X(\widetilde D, \widetilde D) = 0$.

Since $q_X$ is positive semi-definite on $\Sigma$, by the Cauchy--Schwarz inequality, for any $D \in \Sigma$, we have
$$\abs{q_X(\widetilde D, D)}^2\le q_X(\widetilde D, \widetilde D) \cdot q_X(D, D)=0.$$
Hence $q_X(\widetilde D, D)=0$.
On the other hand, $q_X(\widetilde D, H_n) = 0$ since $\widetilde D \in \Sigma$.
Note that the ample $\bR$-divisor $H_n$ is not $D_1 \cdots D_{s-1} \cdot H_{s+1} \cdots H_n$-primitive by Lemma~\ref{lemma-2.3I}(1), i.e., $H_n\notin \Sigma$. So $q_X(\widetilde D, D)=0$ for any $D\in \N^1(X)$ since $H_n$ and $\Sigma$ span the whole $\N^1(X)$.
Therefore, the equation \eqref{eqn-DS3.5} follows.

We show the uniqueness of $\widetilde D$.
If $\widetilde D$ is not unique up to a multiple scale, then the last equality is true for any $\widetilde D\in F$ since $\dim F = 2$.
In particular, it is true for $\widetilde D = D_s$.
This contradicts Lemma~\ref{lemma-2.3I}(1).
In particular, if we write $\widetilde D = a D_s + b D'_s$, then $ab \ne 0$.

It has been shown that the set of all classes $\widetilde D\in F$ satisfying the equation \eqref{eqn-DS3.5}
is equal to the straight line $F\cap\Sigma$ with $\Sigma$ defined by $D_1, \dots, D_{s-1}, H_{s+1}, \dots, H_n$.
We then deduce that $F\cap\Sigma$ does not depend on $H_n$ because the equation \eqref{eqn-DS3.5} does not depend on $H_n$. By symmetry, this intersection does not depend either on $H_{s+1}, \dots, H_{n-1}$ so that the equation \eqref{eqn-DS3.5} holds for any ample $\bR$-divisors $H_{s+1}, \dots, H_{n-1}$.
So we have proved the assertion (1).

(2) Suppose that $D_1 \cdots D_{s-1} \cdot D_s \cdot D'_s \wnum 0$.
Then by the assertion (1), there exists a unique non-zero real number $b$ such that $D_1 \cdots D_{s-1} \cdot (D_s + b D'_s) \wnum 0$.
Let $g^*$ act on the last equality.
We have $D_1 \cdots D_{s-1} \cdot (\lambda D_s + b \lambda' D'_s) \wnum 0$.
In other words, $D_1 \cdots D_{s-1} \cdot (D_s + \frac{b \lambda'}{\lambda} D'_s) \wnum 0$ with $\frac{b \lambda'}{\lambda} \ne b$.
This contradicts with the uniqueness of the non-zero real number $b$.
\end{proof}

To conclude this section, we quote the following Birkhoff's generalization of the Perron--Frobenius theorem since we frequently use it, as well as a theorem of Lie--Kolchin type for a cone.

\begin{theorem}[{cf. \cite{Birkhoff67}}] \label{theorem-Birkhoff-cone}
Let $V$ be a finite-dimensional $\bR$-vector space and $C \subset V$ a salient closed convex cone such that $C$ spans $V$ as a vector space.
Let $g$ be an $\bR$-linear endomorphism of $V$ such that $g(C) \subseteq C$.
Then there is an eigenvector $\upsilon_g \in C$ such that $g(\upsilon_g) = \rho(g) \upsilon_g$, where $\rho(g)$ is the spectral radius of $g$.
\end{theorem}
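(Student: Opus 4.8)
The plan is to deduce this from Brouwer's fixed point theorem by a ``strictly positive perturbation'' argument, i.e., the finite-dimensional form of the Krein--Rutman theorem. First I would record the elementary facts about $C$: since $C$ is closed, convex, salient, and spans $V$, it has nonempty interior, and its dual cone $C^{\vee} \coloneqq \{\ell \in V^{\vee} : \ell|_{C} \ge 0\}$ is again closed, convex, salient and spanning, so $\mathrm{int}(C^{\vee}) \ne \emptyset$. Any $\ell \in \mathrm{int}(C^{\vee})$ is strictly positive on $C \setminus \{0\}$ and satisfies $\ell(x) \ge \delta \norm{x}$ on $C$ for some $\delta > 0$; fix such an $\ell$ together with a point $u_{0} \in \mathrm{int}(C)$. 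Then $B \coloneqq \{x \in C : \ell(x) = 1\}$ is a nonempty compact convex subset of $V$.

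Next, for each $\epsilon > 0$ I would introduce the perturbed operator $g_{\epsilon} \coloneqq g + \epsilon\, \ell(\,\cdot\,)\, u_{0}$. If $x \in C \setminus \{0\}$ then $g(x) \in C$ and $\epsilon \ell(x) u_{0} \in \mathrm{int}(C)$, so $g_{\epsilon}(x) \in \mathrm{int}(C)$; in particular $\ell(g_{\epsilon}(x)) > 0$, so $x \mapsto g_{\epsilon}(x)/\ell(g_{\epsilon}(x))$ is a well-defined continuous self-map of $B$. Brouwer's theorem produces a fixed point $x_{\epsilon} \in B$, giving $g_{\epsilon}(x_{\epsilon}) = t_{\epsilon}\, x_{\epsilon}$ with $t_{\epsilon} \coloneqq \ell(g_{\epsilon}(x_{\epsilon})) > 0$; since $x_{\epsilon} = t_{\epsilon}^{-1} g_{\epsilon}(x_{\epsilon})$, moreover $x_{\epsilon} \in \mathrm{int}(C)$.

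The key point is that $t_{\epsilon} = \rho(g_{\epsilon})$. Let $\mu \in \bC$ be an eigenvalue of $g_{\epsilon}$ with $|\mu| = \rho(g_{\epsilon}) \ge t_{\epsilon} > 0$ and a corresponding eigenvector $w = a + \sqrt{-1}\, b \ne 0$ in $V \otimes_{\bR} \bC$. The set $\{\mathrm{Re}(e^{\sqrt{-1}\theta} w) : \theta \in \bR\} \subset V$ is compact and $x_{\epsilon} \in \mathrm{int}(C)$, so there is $c > 0$ with $c\, x_{\epsilon} - \mathrm{Re}(e^{\sqrt{-1}\theta} w) \in C$ for all $\theta$. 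Applying the ($C$-preserving) endomorphism $g_{\epsilon}^{m}$ gives $c\, t_{\epsilon}^{m}\, x_{\epsilon} - \mathrm{Re}(e^{\sqrt{-1}\theta} \mu^{m} w) \in C$ for all $m$ and $\theta$. Choose $\ell_{0} \in C^{\vee} \setminus \{0\}$ with $\ell_{0}(a) \ne 0$ or $\ell_{0}(b) \ne 0$, which is possible because $C^{\vee}$ spans $V^{\vee}$ and $w \ne 0$; pairing the displayed class with $\ell_{0}$ and then taking the supremum over $\theta$ yields $c\, t_{\epsilon}^{m}\, \ell_{0}(x_{\epsilon}) \ge |\mu|^{m}\, |\ell_{0}(a) + \sqrt{-1}\, \ell_{0}(b)| = \rho(g_{\epsilon})^{m}\, \kappa$, where $\kappa > 0$ and $c\, \ell_{0}(x_{\epsilon}) > 0$ are constants independent of $m$. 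Taking $m$-th roots and letting $m \to \infty$ gives $t_{\epsilon} \ge \rho(g_{\epsilon})$, hence $t_{\epsilon} = \rho(g_{\epsilon})$. Finally I would let $\epsilon \to 0$: as $g_{\epsilon} \to g$ in norm and the spectral radius is continuous on $\End(V)$, we have $t_{\epsilon} = \rho(g_{\epsilon}) \to \rho(g)$; choosing a subsequence along which $x_{\epsilon} \to \upsilon_{g} \in B$ (so $\upsilon_{g} \in C \setminus \{0\}$), we obtain $g(\upsilon_{g}) = \lim_{\epsilon \to 0} g_{\epsilon}(x_{\epsilon}) = \lim_{\epsilon \to 0} t_{\epsilon}\, x_{\epsilon} = \rho(g)\, \upsilon_{g}$. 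Note that $\rho(g) = 0$ is not an exceptional case: it simply yields a nonzero $\upsilon_{g} \in C$ with $g(\upsilon_{g}) = 0$.

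The main obstacle is the middle step, namely matching the eigenvalue $t_{\epsilon}$ produced by the fixed-point construction with the spectral radius $\rho(g_{\epsilon})$; this is exactly where the positivity of $g$ relative to the cone (and of the dual action on $C^{\vee}$) is used, and it is the heart of the Perron--Frobenius phenomenon. An alternative route avoids Brouwer altogether: for $s > \rho(g)$ the resolvent $(sI - g)^{-1} = \sum_{m \ge 0} s^{-m-1} g^{m}$ maps $C$ into $C$, and applying Pringsheim's theorem to the power series $s \mapsto \langle \ell_{0}, (sI - g)^{-1} u_{0} \rangle$ (which has non-negative Taylor coefficients in $s^{-1}$) forces $s = \rho(g)$ to be a singularity; one then extracts $\upsilon_{g}$ as a limit of normalized vectors $(sI - g)^{-1} u_{0} / \norm{(sI - g)^{-1} u_{0}}$ as $s \downarrow \rho(g)$. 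The same difficulty of pinning the eigenvalue to $\rho(g)$ reappears there, now in the guise of showing that the radius of convergence equals $1/\rho(g)$ for a suitable choice of $\ell_{0}$ and $u_{0}$.
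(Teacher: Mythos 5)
Your proof is correct, but note that the paper does not actually prove Theorem~\ref{theorem-Birkhoff-cone}: it simply quotes it from Birkhoff's 1967 note \cite{Birkhoff67}, so there is no in-paper argument to compare against. Your write-up is a clean, self-contained version of the standard finite-dimensional Krein--Rutman argument (strictly positive perturbation $g_\epsilon = g + \epsilon\,\ell(\cdot)\,u_0$, Brouwer on the compact convex slice $B = \{x \in C : \ell(x)=1\}$, then pass to the limit), and it handles the only genuinely delicate point --- identifying the Brouwer eigenvalue $t_\epsilon$ with $\rho(g_\epsilon)$ --- correctly: the inclusion $c\,t_\epsilon^m x_\epsilon - \mathrm{Re}(e^{\sqrt{-1}\theta}\mu^m w) \in C$, paired against a functional $\ell_0 \in C^\vee$ with $\ell_0(w)\neq 0$ and optimized over $\theta$, gives $t_\epsilon \ge |\mu|$, and the reverse inequality is automatic since $t_\epsilon$ is an eigenvalue. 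The degenerate case $\rho(g)=0$ is also handled properly (the limit vector $\upsilon_g$ lies in $B$, hence is nonzero), and the alternative Pringsheim/resolvent route you sketch at the end is the other classical way to obtain the result. In short: a correct and complete proof of a statement the paper leaves as a citation.
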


\begin{theorem}[{cf. \cite[Theorem~1.1]{KOZ09}}] \label{theorem-KOZ-cone}
Let $V$ be a finite-dimensional $\bR$-vector space and $\{0\} \ne C \subset V$ a salient closed convex cone.
Suppose that a solvable subgroup $G \le \GL(V)$ is Z-connected (i.e., its Zariski closure in $\GL(V_\bC)$ is connected with respect to the Zariski topology), and $G(C) \subseteq C$.
Then $G$ has a common eigenvector in the cone $C$.
\end{theorem}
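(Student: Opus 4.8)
The plan is to follow Keum--Oguiso--Zhang and argue by induction on $\dim V$, using Birkhoff's theorem (Theorem~\ref{theorem-Birkhoff-cone}) as the one device that keeps the eigenvectors we build inside the cone. First I would make two reductions. The linear span $C-C$ of the cone is a $G$-invariant subspace (since $G$ is linear and $G(C)\subseteq C$), on which $C$ is still a salient closed convex cone, now spanning the whole space; moreover $G|_{C-C}$ stays solvable and Z-connected, because the image of the connected algebraic group $\bar G\le\GL(V_\bC)$ under the restriction morphism is again connected. So I may assume that $C$ spans $V$; the case $\dim V\le 1$ being trivial, I take $\dim V\ge 2$.

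I would then first settle the case where $G$ is abelian, by an auxiliary induction on $\dim V$. If every element of $G$ acts by a scalar, any non-zero vector of $C$ is a common eigenvector. Otherwise I pick a non-scalar $g_1\in G$; Theorem~\ref{theorem-Birkhoff-cone} applied to $g_1$ gives $v\in C$ with $g_1 v=\rho(g_1)\,v$, and $W_1:=\ker(g_1-\rho(g_1)\,\id)$ is a proper non-zero subspace. As $G$ is abelian it preserves $W_1$, hence the non-zero salient closed convex cone $C\cap W_1$, and applying the inductive hypothesis to $(W_1,G|_{W_1},C\cap W_1)$ (Z-connectedness being inherited as before) yields a common eigenvector of $G$ lying in $C$.

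For the general step I would put $N:=[G,G]$. Here the Z-connectedness is used decisively: since $\bar G$ is connected solvable, its commutator subgroup $[\bar G,\bar G]$ is unipotent, so every element of $N\subseteq[\bar G,\bar G]$ is a unipotent operator on $V$, and by Kolchin's theorem $N$ is a nilpotent group. The heart of the argument is to show that $N$ has a non-zero fixed vector in $C$; I would prove, by a further induction on dimension, the more general claim that any nilpotent group $U$ of unipotent operators preserving a salient closed convex cone (which I may assume spans the ambient space) has a common fixed vector in that cone. If $U$ acts non-trivially, I choose $z\neq\id$ in its centre; then $z$ is unipotent with spectral radius $1$, so Theorem~\ref{theorem-Birkhoff-cone} supplies a vector of the cone fixed by $z$, whence the cone meets the proper, non-zero, $U$-invariant subspace $\ker(z-\id)$ in a non-zero salient closed convex subcone, and I conclude by induction --- replacing that subspace by the span of this subcone so as to keep the spanning hypothesis needed for Theorem~\ref{theorem-Birkhoff-cone}, and noting that a common fixed vector found there is automatically fixed by all of $U$. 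Applying this to $N$, the set $\Omega:=\{v\in C:nv=v\text{ for all }n\in N\}$ is a non-zero, $G$-invariant (as $N\trianglelefteq G$) salient closed convex cone. Finally, $N$ acts trivially on the $G$-invariant subspace $V_0:=\operatorname{span}_\bR\Omega$, so the $G$-action on $V_0$ factors through the abelian quotient $G/N$ and is still Z-connected: if $V_0\subsetneq V$ I finish by the inductive hypothesis applied to $(V_0,G|_{V_0},\Omega)$, and if $V_0=V$ then $N=\{\id\}$, so $G$ is abelian and I am back in the case already done.

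I expect the main obstacle to be the interplay between the real cone and the complex structure theory, rather than any single hard estimate: Lie--Kolchin, the unipotence of $[\bar G,\bar G]$ (which is exactly what the Z-connected hypothesis is for), and the existence of central unipotent elements are all statements about the Zariski closure $\bar G$ over $\bC$, whereas $C$ lives in the real space $V$, and Theorem~\ref{theorem-Birkhoff-cone} is the only bridge between the two --- it converts ``$g$ preserves the salient closed convex cone $C$'' together with ``$\rho(g)=1$'' into ``$g$ fixes a vector of $C$''. The remaining effort is the bookkeeping to check that each reduction --- passing to $C-C$, to an eigen-subspace, or to $\operatorname{span}_\bR\Omega$ --- preserves simultaneously the three properties ``salient closed convex cone'', ``solvable'', and ``Z-connected''.
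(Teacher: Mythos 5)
The paper does not prove Theorem~\ref{theorem-KOZ-cone}; it is quoted from Keum--Oguiso--Zhang \cite[Theorem~1.1]{KOZ09}, so there is no in-paper proof to compare against. Your reconstruction is correct and follows the standard KOZ strategy: Theorem~\ref{theorem-Birkhoff-cone} (Birkhoff) is the only bridge from spectral data to the real cone; Z-connectedness of $\overline{G}\le\GL(V_\bC)$ forces $[\overline{G},\overline{G}]$ to be unipotent, so Kolchin's theorem makes $[G,G]$ a nilpotent group of unipotent operators, whose common fixed-point set in $C$ then carries an abelian $G/[G,G]$-action handled by the separate abelian induction. You also correctly supply the reduction to $C$ spanning $V$ (replacing $V$ by $C-C$), which is needed because the statement here is slightly more general than KOZ's original, whose cone is assumed to have nonempty interior, and because Theorem~\ref{theorem-Birkhoff-cone} requires the spanning hypothesis; the same remark applies at each step where you pass to $\operatorname{span}_\bR$ of a subcone before invoking the inductive hypothesis. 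All the bookkeeping you flag (salient/closed/convex, solvability, and Z-connectedness being preserved under passage to $C-C$, to $\ker(g-\rho(g)\operatorname{id})$, to $\ker(z-\operatorname{id})$, and to $\operatorname{span}_\bR\Omega$) goes through as you describe, since in each case the relevant subspace is cut out by Zariski-closed conditions that $G$, and hence $\overline{G}$, satisfies, and the image of a connected algebraic group under restriction is connected.
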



\section{Proof of Theorem~\ref{theorem-Tits}} \label{sec-proof-Tits}


\begin{proof}[Proof of Theorem~\ref{theorem-Tits}(\ref{theorem-Tits-ass1})]
The proof will be split into the following four steps.

\begin{step} \label{proof-Tits-S1}
We first consider the induced action of $G$ on the complexification $\NS_\bC(X)$ of the N\'eron--Severi group $\NS(X)$ by the natural pullback.
Denote the induced group by $G|_{\NS_\bC(X)}$ which is a subgroup of $\GL(\NS_\bC(X))$.
By our assumption on $G$ and the classical Tits alternative theorem \cite[Theorem~1]{Tits72}, $G|_{\NS_\bC(X)}$ has a solvable subgroup of finite index.
Let $G_1$ be the inverse image, via the natural group homomorphism $G \to \GL(\NS_\bC(X))$, of the neutral component of the Zariski closure of that solvable subgroup in $\GL(\NS_\bC(X))$.
Then $G_1$ is a finite-index subgroup of $G$ such that $G_1|_{\NS_\bC(X)}$ is solvable and Z-connected in $\GL(\NS_\bC(X))$; see also \cite[Remark~3.10]{DHZ15}.

In what follows, for simplicity of notation, we will replace $G$ by the above $G_1$ and assume that $G|_{\NS_\bC(X)}$ is solvable and Z-connected.
\end{step}

\begin{step} \label{proof-Tits-S2}
Let us consider the induced action of $G$ on the finite-dimensional $\bR$-vector space $\N^1(X) \isom \NS_\bR(X)$.
Note that the natural map $G|_{\N^1(X)} \to G|_{\NS_\bC(X)}$ is an isomorphism and hence $G|_{\N^1(X)}$ is solvable and Z-connected.
Since $G|_{\N^1(X)}$ preserves the nef cone $\Nef(X)$ of $X$,
by applying Theorem~\ref{theorem-KOZ-cone} to the triplet $(\Nef(X), \N^1(X), G|_{\N^1(X)})$,
there is a common eigenvector $D_1\in \Nef(X)$ of $G|_{\N^1(X)}$.
It thus defines a group character $\chi_1\colon G \to \bR_{>0}$ satisfying $g^*D_1 \num \chi_1(g) D_1$ in $\N^1(X)$ for any $g\in G$.

We are going to produce a quasi-nef sequence beginning from $D_1$ as follows.
Consider the induced pullback action of $G$ on the finite-dimensional vector subspace
$D_1 \cdot \N^1(X) \subseteq \N^2(X)$
which is the image of $\N^1(X)$ under the linear map $\sigma_1$ induced from the multiplication by $D_1$.
It is easy to see that the $\sigma_1$-image of $\Nef(X)$ in $\N^2(X)$ is still a salient convex cone,
which is non-zero by Lemma~\ref{lemma-2.3I}(1).
Thus its closure\footnote{It is necessary to take the closure since the linear image of a closed convex cone may {\it not} be closed any more.}
$\overline{\sigma_1(\Nef(X))}$ in $\N^2(X)$ is a salient closed convex cone preserved by $G|_{\sigma_1(\N^1(X))}$.
Also, as the image of $G|_{\N^1(X)}$, the induced group $G|_{\sigma_1(\N^1(X))}$ is solvable and Z-connected.
Therefore, applying Theorem~\ref{theorem-KOZ-cone} to the triplet $(\overline{\sigma_1(\Nef(X))}, \sigma_1(\N^1(X)), G|_{\sigma_1(\N^1(X))})$,
there is a common eigenvector $\sigma_1(D_2) \in \overline{\sigma_1(\Nef(X))}$ of $G|_{\sigma_1(\N^1(X))}$.
We then define a group character $\chi_2\colon G \to \bR_{>0}$ such that $g^*(D_1 \cdot D_2) \num \chi_1(g) \chi_2(g) D_1 \cdot D_2$ in $\N^2(X)$ for any $g\in G$.

Repeating this procedure, we will obtain a quasi-nef sequence $D_1, \dots, D_n$ of length $n$ such that
for any $1\le r\le n-1$, the induced group $G|_{\sigma_r(\N^1(X))}$ is solvable, Z-connected,
and preserves the salient closed convex cone
$$\overline{\sigma_r(\Nef(X))} \subset \sigma_r(\N^1(X)) \subseteq \N^{r+1}(X).$$
Moreover, there are group characters $\chi_i\colon G \to \bR_{>0}$ with $1\le i\le n$ such that for any $1\le t \le n$,
$$g^*(D_1 \cdots D_t) \num \chi_1(g) \cdots \chi_t(g) D_1 \cdots D_t \ \text{ in } \N^t(X).$$
Also, note that $\chi_1(g) \cdots \chi_n(g) = 1$ for any automorphism $g$ in $G$.
We then define a group homomorphism of $G$ as follows:
\begin{equation} \label{eqn-DS-morphism}
\psi\colon G \to (\bR^{n-1}, +), \quad g\mapsto (\log\chi_1(g), \dots, \log\chi_{n-1}(g)).
\end{equation}
\end{step}

\begin{step} \label{claim-Ker=N}
We claim that $\Ker \psi = N(G)$; in particular, $N(G) \unlhd G$.
On the one hand, for any $g \in N(G)$, it follows from Corollary~\ref{cor-log-concave} that all dynamical degrees equal $1$ and hence $\chi_1(g) \cdots \chi_t(g) \le 1$ for any $1\le t\le n$.
Observe that the pullback action of $g^*$ on $N^t(X)$ is defined over $\bZ$ so that all minimal polynomials of dynamical degrees are also defined over $\bZ$.
We thus have $\chi_1(g) \cdots \chi_t(g) = 1$ for any $t$.
This yields that $\chi_t(g) = 1$ for all $t$, i.e., $g\in \Ker\psi$.

On the other hand, suppose that there is an automorphism $g_{\mathrm o} \in \Ker \psi$ but $g_{\mathrm o} \notin N(G)$.
Namely, $\chi_i(g_{\mathrm o}) = 1$ for all $1\le i\le n$, but the first dynamical degree $d_1(g_{\mathrm o}) > 1$.
Applying the generalized Perron--Frobenious theorem to the triplet $(\Nef(X), \N^1(X), g_{\mathrm o}^*|_{\N^1(X)})$ (see Theorem~\ref{theorem-Birkhoff-cone}),
there is a nef eigenvector $L_{g_{\mathrm o}}$ of $g_{\mathrm o}^*|_{\N^1(X)}$ such that $g_{\mathrm o}^*L_{g_{\mathrm o}} \num d_1(g_{\mathrm o}) L_{g_{\mathrm o}}$ in $\N^1(X)$.
Since $d_1(g_{\mathrm o}) > 1$, we have $D_1 \cdots D_{n-1} \cdot L_{g_{\mathrm o}} = 0$ by the following displayed equalities
$$D_1 \cdots D_{n-1} \cdot L_{g_{\mathrm o}} = g_{\mathrm o}^*(D_1 \cdots D_{n-1} \cdot L_{g_{\mathrm o}}) = d_1(g_{\mathrm o}) D_1 \cdots D_{n-1} \cdot L_{g_{\mathrm o}}.$$
Note that for each $s$, $D_1, \dots, D_{s}$ is a quasi-nef sequence of length $s$ by the construction of $D_i$ in Step~\ref{proof-Tits-S2}. \\
Then $D_1, \dots, D_{n-2}, L_{g_{\mathrm o}}$ cannot be a quasi-nef sequence of length $n-1$
(which implies that $D_1 \cdots D_{n-2} \cdot L_{g_{\mathrm o}} \num 0$).
Since otherwise, by applying Lemma~\ref{lemma-2.3II}(2) to the two quasi-nef sequences $D_1, \dots, D_{n-1}$ and $D_1, \dots, D_{n-2}, L_{g_{\mathrm o}}$,
we would have $D_1 \cdots D_{n-2} \cdot D_{n-1} \cdot L_{g_{\mathrm o}} \ne 0$.
This is a contradiction.
Repeating this argument several times, we eventually show that $D_1 \cdot L_{g_{\mathrm o}} \num 0$.
Then by Lemma~\ref{lemma-2.3II}(1), $D_1 + b L_{g_{\mathrm o}} \wnum 0$ for a unique real number $b$.
Let $g_{\mathrm o}^*$ act on the last equality.
We have $D_1 + bd_1(g_{\mathrm o}) L_{g_{\mathrm o}} \wnum 0$ with $bd_1(g_{\mathrm o}) \ne b$.
This contradicts with the uniqueness of the real number $b$, and hence the claim follows.
\end{step}

\begin{step} \label{claim-discrete-image}
We claim that $\im \psi$ is discrete in the additive group $\bR^{n-1}$ and hence $\im \psi \isom \bZ^{\oplus r}$ with $r\le n-1$.
It suffices to show that the origin $\mathrm{o}$ is an isolated point in $\im \psi$.
That is, there exists a constant $0 < \eps \ll 1$ such that for any $g\in G$, $\psi(g) \not\in B_\eps(\mathrm{o}) \setminus \{\mathrm{o}\}$, where $\mathrm o$ is the usual origin of the Euclidean space $\bR^{n-1}$.

Choose $\eps = \frac{\log A}{n-1}$, where $A > 1$ is a constant depending only on the Picard number $\rho(X)\coloneqq \rank \NS(X)$ of $X$ such that for any automorphism $g \in \Aut(X)$,
$$\text{if } d_1(g) > 1 \text{ then } d_1(g) \ge A.$$
The existence of such a constant $A$ is due to the fact that the minimal polynomial of $g^*|_{\N^1(X)}$ is a monic polynomial of degree $\rho(X)$ with integral coefficients (see also the proof of \cite[Corollary~3.7]{Dinh12}).
Suppose that there is an automorphism $g_{\mathrm o}\in G$ such that $\psi(g_{\mathrm o}) \in B_\eps(\mathrm{o}) \setminus \{\mathrm{o}\}$,
i.e., $0 < \norm{\psi(g_{\mathrm o})} < \eps$,
where $\norm{\cdot}$ is the standard Euclidean norm of $\bR^{n-1}$.
Then $e^{-\eps} < \chi_i(g_{\mathrm o}) < e^\eps$ for all $i$.
Also, by Step~\ref{claim-Ker=N}, $g_{\mathrm o}$ is of positive entropy since $g_{\mathrm o} \notin \Ker \psi$.
Thus by Theorem~\ref{theorem-Birkhoff-cone} again,
there is a nef eigenvector $L_{g_{\mathrm o}}$ of $g_{\mathrm o}^*|_{\N^1(X)}$ such that $g_{\mathrm o}^*L_{g_{\mathrm o}} \num d_1(g_{\mathrm o}) L_{g_{\mathrm o}}$ in $\N^1(X)$ with $d_1(g_{\mathrm o}) > 1$.
If $D_1 \cdots D_{n-1} \cdot L_{g_{\mathrm o}} \ne 0$, then $d_1(g_{\mathrm o}) = (\chi_1(g_{\mathrm o}) \cdots \chi_{n-1}(g_{\mathrm o}))^{-1} < e^{(n-1)\eps}$ because of the following equalities
$$D_1 \cdots D_{n-1} \cdot L_{g_{\mathrm o}} = g_{\mathrm o}^*(D_1 \cdots D_{n-1} \cdot L_{g_{\mathrm o}}) = \chi_1(g_{\mathrm o}) \cdots \chi_{n-1}(g_{\mathrm o}) \cdot d_1(g_{\mathrm o}) D_1 \cdots D_{n-1} \cdot L_{g_{\mathrm o}}.$$
If $D_1 \cdots D_{n-1} \cdot L_{g_{\mathrm o}} = 0$, as the proof in Step~\ref{claim-Ker=N} (essentially using Lemma~\ref{lemma-2.3II}),
we would have $d_1(g_{\mathrm o}) = \chi_i(g_{\mathrm o}) < e^\eps$ for some $1\le i\le n-1$.
Therefore, in either case, we prove that
$$d_1(g_{\mathrm o}) < e^{(n-1)\eps} = A.$$
This is a contradiction of the choice of the constant $A$ and hence the claim follows.
\end{step}

We thus complete the proof of Theorem~\ref{theorem-Tits}(\ref{theorem-Tits-ass1}).
\end{proof}

\begin{proof}[Proof of Theorem~\ref{theorem-Tits}(\ref{theorem-Tits-ass2})]
Note that $G|_{\NS_\bC(X)}$ does not contain any non-abelian free subgroup because neither does $G$.
So according to the classical Tits alternative theorem \cite[Theorem~1]{Tits72}, $G|_{\NS_\bC(X)}$ is virtually solvable.
Let $K$ denote the kernel of the natural group homomorphism $G\to \GL(\NS_\bC(X))$.
Then by \cite[Lemma~5.5]{Dinh12}, it suffices to prove that $K$ is virtually solvable.
Denote $K^0 \coloneqq K\cap \Aut^0(X)$.
By a Fujiki--Lieberman type theorem (see \cite[Remark~2.6]{MZ18} for a proof in arbitrary characteristic using a Hilbert scheme argument), we have
$$\left[K : K^0\right] = \left[K\cdot \Aut^0(X) : \Aut^0(X)\right] \le \left[\Aut_{[L]}(X) : \Aut^0(X)\right] < \infty,$$
where $\Aut_{[L]}(X)$ denotes the subgroup of $\Aut(X)$ fixing an ample class $[L] \in \NS(X)$.
Also note that $K^0 = G^0$ is finitely generated.
Hence, after replacing $K$ by $K^0$, we may assume that $K$ is contained in $\Aut^0(X)$ and finitely generated.
Let $\bK$ be the Zariski closure of $K$ in the smooth algebraic group $(\aut_X^0)_{\red}$.
Then there exists a normal subgroup scheme $\bH$ of $\bK$ such that the quotient $\bK/\bH$ is affine and $\bH$ is commutative (cf.~\cite[Theorem~1]{Brion17}).
Applying the Tits alternative theorem to the finitely generated linear group $K/K\cap \bH(\bk) \le (\bK/\bH)(\bk)$ (cf.~\cite[Corollary~1]{Tits72}), it follows that either $K/K\cap \bH(\bk)$ (and hence $K$) contains a non-abelian free subgroup, or $K/K\cap \bH(\bk)$ is virtually solvable.
The first case cannot happen since we assume that $G$ does not contain any non-abelian free subgroup.
Hence by \cite[Lemma~5.5]{Dinh12} again, $K$ is virtually solvable which concludes the proof of the assertion (\ref{theorem-Tits-ass2}).
\end{proof}

The following example shows that the upper bound $\dr(G) \le n-1$ is optimal.
We refer to \cite{Hu-ecdg} for a systematic investigation of dynamical degrees on general abelian varieties.

\begin{example}
\label{eg:maximal-rank}
Let $E$ be an elliptic curve such that its endomorphism algebra $\End^0(E) \coloneqq \End(E) \otimes_\bZ \bQ = \bQ$
(in positive characteristic, this is equivalent to saying that $E$ cannot be defined over a finite field; see \cite[\S22, Deuring's Theorem]{Mumford}).
Let $X = E^n$.
Then by Poincar\'e's complete reducibility theorem (cf. \cite[\S19, Theorem~1]{Mumford}),
we have $\End^0(X) = \Mat_n(\End^0(E))$,
where $\Mat_n(R)$ denotes the ring of $n\times n$ matrices over $R$.
In particular, there is still a faithful $\SL_n(\bZ)$-action on $X$ as in \cite[Example~4.5]{DS04} or \cite[Example~1.4]{Dinh12}.
Hence it suffices to show that if $g = A \in \SL_n(\bZ)$ has the spectral radius $\rho(A) > 1$, then $g$ is of positive entropy in the sense that $\rho(g^*|_{\NS_\bR(X)}) > 1$.

We first notice that the N\'eron--Severi space $\NS_\bQ(X)$ can be identified with the subgroup of $\End^0(X)$ consisting of symmetric elements with respect to the Rosati involution (cf. \cite[\S21]{Mumford}).
More precisely, we send a line bundle $\sL$ on $X$ to $\phi_{\sL_0}^{-1}\circ \phi_\sL$, where $\sL_0$ is a fixed ample line bundle on $X$ and in general, the induced homomorphism $\phi_\sL \colon X \to \widehat X \coloneqq \Pic^0(X)$ is defined by $x \mapsto T_x^* \sL \otimes \sL^{-1}$.
Hence the pullback action $g^*$ on $\NS_\bQ(X)$ can be reinterpreted as follows:
$$\psi \colon \NS_\bQ(X) \lra \NS_\bQ(X), \quad \phi_{\sL_0}^{-1} \circ \phi_\sL \mapsto \phi_{\sL_0}^{-1} \circ \phi_{g^*\!\sL} .$$
Note that $\phi_{\sL_0}^{-1} \circ \phi_{g^*\!\sL} = \phi_{\sL_0}^{-1} \circ \widehat g \circ \phi_\sL \circ g = g^{\dagger} \circ \phi_{\sL_0}^{-1} \circ \phi_\sL \circ g$,
where $\widehat g$ is the induced dual automorphism of $\widehat X$ and $g^{\dagger} \coloneqq \phi_{\sL_0}^{-1} \circ \widehat g \circ \phi_{\sL_0}$ is the Rosati involution of $g$.
Thus, we can naturally extend $\psi$ to the whole endomorphism algebra $\End^0(X)$ in the following way:
$$\Psi \colon \End^0(X) \lra \End^0(X) \quad \text{via} \quad \phi \mapsto g^{\dagger} \circ \phi \circ g.$$
It is easy to verify that $\Psi$ is represented by the Kronecker product $A \otimes A$, whose spectral radius is $\rho(A)^2$.
Furthermore, in our case, the N\'eron--Severi space $\NS_\bR(X)$ is isomorphic with the subspace $\Sym_n(\bR)$ of real symmetric matrices (cf. \cite[\S21, Theorem~6]{Mumford}).
In particular, if we denote all eigenvalues of $A$ by $\lambda_i$, $1\le i\le n$, then all eigenvalues of $\Psi$ are all possible products $\lambda_i \lambda_j$ for $1\le i, j\le n$ and all eigenvalues of $\psi$ are just all $\lambda_i \lambda_j$ with $1\le i \le j\le n$.
It follows that $\rho(g^*|_{\NS_\bR(X)}) = \rho(A)^2 > 1$.
\end{example}


\section{Proof of Theorem~\ref{theorem-max-rank}} \label{sec-proof-max-rank}


\noindent
Our proof of Theorem~\ref{theorem-max-rank} will heavily rely on the following lemma (and its generalization Lemma~\ref{lemma-G-rat-fib-rank}) which improves \cite[Lemma~2.10]{Zhang09}.
It turns out that our lemmas are crucial to show the first assertion of Theorem~\ref{theorem-max-rank} or Lemma~\ref{lemma-max-rank-kappa}.
In fact, as we have mentioned in the introduction that, Dinh or Zhang's argument essentially depends on the Deligne--Nakamura--Ueno theorem (see e.g. \cite[Theorem~4.3]{Dinh12}), which is not known in positive characteristic, as far as we know.

Let $G$ be a subgroup of $\Aut(X)$. A morphism $\pi\colon X\to Y$ of projective varieties, or more generally, a rational map $\pi\colon X \ratmap Y$ is called {\it $G$-equivariant} if $G$ descends to a biregular (possibly non-faithful) action on $Y$.
The induced $G$-action on $Y$ would be denoted by $G|_Y$.

\begin{lemma} \label{lemma-G-fib-rank}
Let $\pi\colon X\to Y$ be a $G$-equivariant surjective morphism of projective varieties with $n = \dim X > \dim Y = m > 0$.
Suppose that $G|_{\N^1(X)}$ is solvable and Z-connected.
Then we have $\dr(G|_X) \le \dr(G|_Y) + n - m - 1$.
In particular, $\dr(G|_X) \le n - 2$ as $\dr(G|_Y) \le m - 1$.
\end{lemma}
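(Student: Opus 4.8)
The plan is to combine a dynamical rank computation on the base $Y$ with a rank estimate on the generic fibre, using the quasi-nef sequence machinery from \S\ref{subsec-quasi-nef}. First I would pass to a finite-index subgroup $G_1 \le G$ as in Theorem~\ref{theorem-Tits}(\ref{theorem-Tits-ass1}) so that $G_1|_{\N^1(X)}$ is solvable and Z-connected; since $\dr$ is independent of the choice of such a subgroup, it suffices to bound $\dr(G_1|_X)$. Pulling back a common nef eigenvector on $Y$ (obtained from Theorem~\ref{theorem-KOZ-cone} applied to $(\Nef(Y),\N^1(Y),G_1|_{\N^1(Y)})$) along $\pi$, and then running the iterated-cone construction of Step~\ref{proof-Tits-S2} from \emph{that} class upward, I would produce a quasi-nef sequence $D_1,\dots,D_n$ on $X$ whose first $m$ members come from ample (or nef) classes on $Y$ via $\pi^*$. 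The key point is that the characters $\chi_1,\dots,\chi_m$ attached to these first $m$ steps are exactly (the logarithms of) the characters governing $\dr(G_1|_Y)$: because $\pi^* \colon \N^\bullet(Y)\to\N^\bullet(X)$ is injective and compatible with $g^*$ via the projection formula (as in the proof of Lemma~\ref{lemma-inv-dyn-deg}), the subgroup of $G_1$ acting with all of $\chi_1,\dots,\chi_m$ trivial maps to null-entropy automorphisms of $Y$ after one observes $d_1(g|_Y)=1$ forces the $Y$-characters to vanish; conversely null-entropy on $Y$ makes these $\chi_i=1$.

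The heart of the argument is then a counting inequality on the homomorphism $\psi\colon G_1 \to (\bR^{n-1},+)$ from \eqref{eqn-DS-morphism}. I would factor $\psi$ through its first $m$ coordinates $\psi_Y\colon G_1\to\bR^m$ (which, by the paragraph above, has image of rank exactly $\dr(G_1|_Y)$ — one must check the first $m$ characters really do compute the dynamical rank of the $Y$-action, using that $\pi^* D_i$ for $i\le m$ spans a $G_1$-stable subspace isomorphic to the relevant $\N^\bullet(Y)$) and the remaining $n-1-m$ coordinates. Since $\dr(G_1|X)=\operatorname{rank}\im\psi$ and $\im\psi$ injects into $\im\psi_Y \times \bR^{\,n-1-m}$, we get $\dr(G_1|_X)\le \dr(G_1|_Y) + (n-1-m)$, which is the claimed bound. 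The ``in particular'' clause is immediate: $Y$ is a projective variety of dimension $m\ge 1$ (and $G_1|_Y$ again has solvable Z-connected image on $\N^1(Y)$, so Theorem~\ref{theorem-Tits}(\ref{theorem-Tits-ass1}) applies to it), giving $\dr(G_1|_Y)\le m-1$ and hence $\dr(G_1|_X)\le n-2$.

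The main obstacle I anticipate is making precise the assertion that the characters $\chi_1,\dots,\chi_m$ built from the $\pi^*$-classes genuinely recover $\dr(G_1|_Y)$ rather than merely bounding it from one side. The subtlety is that the cone construction of Step~\ref{proof-Tits-S2}, carried out on $X$, produces classes $\sigma_i(D_{i+1})$ inside $\N^{i+1}(X)$; I need to know that for $i < m$ these classes, together with the $G_1$-action, are the $\pi^*$-image of a bona fide quasi-nef sequence and character datum on $Y$. This should follow because $\pi^*$ sends $\Nef^\bullet(Y)$ into $\Nef^\bullet(X)$, is injective, commutes with $g^*$, and the multiplication-by-$\pi^*D_j$ maps are compatible with multiplication-by-$D_j$ on $Y$; but verifying that the limit-of-nef-classes condition (ii) in Definition~\ref{def-quasi-nef-seq} descends, and that one can \emph{arrange} the cone construction on $X$ to stay within $\pi^*\N^\bullet(Y)$ for the first $m$ steps (e.g. by choosing the common eigenvectors inside the $\pi^*$-image, which is itself $G_1$-stable and closed), is where the care is needed. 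Once that is in place, the character-counting argument and the reduction to the bound $\dr(G_1|_Y)\le m-1$ are routine.
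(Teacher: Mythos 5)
Your argument matches the paper's proof of Lemma~\ref{lemma-G-fib-rank}: run the quasi-nef construction on $Y$, pull it back via the injective $\pi^*$ to get the first $m$ members of a quasi-nef sequence on $X$, extend to length $n$, and count character ranks using discreteness of $\im\psi$. The paper packages the count via a commutative ladder between the exact sequences $1 \to H|_X/N(G|_X) \to G|_X/N(G|_X) \to G|_Y/N(G|_Y) \to 1$ and $0\to\bR^{n-m}\to\bR^{n-1}\to\bR^{m-1}\to 0$, extracting the extra $-1$ from $\log\chi_m$ vanishing on $H|_X$; your version absorbs that same $-1$ into the identity $\rank\im\psi_Y = \dr(G|_Y)$ via the relation $\chi_1\cdots\chi_m=1$, which is equivalent.
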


\begin{proof}
It is known that $\pi^*\colon \N^1(Y) \to \N^1(X)$ is injective and $\pi^*\N^1(Y)$ is a $g^*$-invariant subspace of $\N^1(X)$ for any $g\in G$.
Then there is a natural restriction homomorphism $G|_{\N^1(X)} \surjmap G|_{\pi^*\N^1(Y)} \isom G|_{\N^1(Y)}$.
By the assumption, $G|_{\N^1(Y)}$ is also solvable and Z-connected.
Running Steps~\ref{proof-Tits-S1} and \ref{proof-Tits-S2} in the proof of Theorem~\ref{theorem-Tits} for $Y$ and $G|_Y$,
we obtain a quasi-nef sequence $B_1, \dots, B_m$ of length $m$ in $\N^1(Y)$
and group characters $\chi_i\colon G|_Y \to \bR_{>0}$ with $1\le i\le m$ such that for any $1\le t \le m$ and $g_Y \in G|_Y$,
$$g_Y^*(B_1 \cdots B_t) \num \chi_1(g_Y) \cdots \chi_t(g_Y) B_1 \cdots B_t \ \text{ in } \N^t(Y), \text{ and } \chi_1(g_Y) \cdots \chi_m(g_Y) = 1.$$
We claim that $D_1 \coloneqq \pi^*B_1, \dots, D_m \coloneqq \pi^*B_m$ is also a quasi-nef sequence of length $m$ in $\N^1(X)$.
Indeed, it suffices to show that $D_1 \cdots D_k \not\num 0$ in $N^k(X)$ for any $1\le k\le m$.
This actually follows from the fact that $\pi^* \colon N^k(Y) \to N^k(X)$ is injective.
Replacing $\chi_i$ by its composition with the natural group homomorphism $\sigma \colon G|_X \to G|_Y$,
we may assume that each $\chi_i$ is a group character of $G|_X$.
By continuing Step~\ref{proof-Tits-S2} for $X$ and $G|_X$, we can produce $D_{m+1}, \dots, D_n\in \N^1(X)$ such that $D_1, \dots, D_n$ is a quasi-nef sequence of length $n$ in $\N^1(X)$ extending $D_1, \dots, D_m$.
At the same time, we also obtain group characters $\chi_i\colon G|_X \to \bR_{>0}$ with $m+1\le i\le n$.
Together with the previous $m$ characters,
they satisfy that for any $1\le t \le n$ and $g\in G|_X$,
$$g^*(D_1 \cdots D_t) \num \chi_1(g) \cdots \chi_t(g) D_1 \cdots D_t \ \text{ in } \N^t(X),$$
and
$$\chi_1(g) \cdots \chi_m(g) = \chi_1(g) \cdots \chi_n(g) = 1.$$
We then define group homomorphisms $\psi_X \colon G|_X \to \bR^{n-1}$ and $\psi_Y \colon G|_Y \to \bR^{m-1}$ as in \eqref{eqn-DS-morphism}.

Note that the $\sigma$-image of $N(G|_X)$ is a normal subgroup of $N(G|_Y)$ since $\pi^* \colon \N^1(Y) \to \N^1(X)$ is injective.
Let $H|_X \unlhd G|_X$ denote the inverse image of $N(G|_Y)$ under $\sigma$ which contains $N(G|_X)$.
By Step~\ref{claim-Ker=N} in the proof of Theorem~\ref{theorem-Tits}, we have the following commutative diagram:
\[\xymatrix{
1 \ar[r] & H|_X/N(G|_X) \ar[r] \ar@{^{(}->}[d]_{h}^{} & G|_X/N(G|_X) \ar@{^{(}->}[d]_{\ol\psi_X}^{} \ar[r] & G|_Y/N(G|_Y) \ar@{^{(}->}[d]_{\ol\psi_Y}^{} \ar[r] & 1 \\
0 \ar[r] & \bR^{n-m} \ar[r] & \bR^{n-1} \ar[r]^{\proj} & \bR^{m-1} \ar[r] & 0,
}\]
where $\proj$ is the obvious projection of $\bR^{n-1}$ to the first $m-1$ coordinates
and the injection $h$ is just the restriction of $\ol\psi_X$ to $H|_X/N(G|_X)$.
Recall that the first $m$ group characters $\chi_1, \ldots, \chi_m$ vanish on $N(G|_Y)$ and hence vanish on $H|_X$ as well.
In particular, the first coordinate of $\im h$ (which is given by $\log \chi_m$) vanishes.
Thus, it follows from Step~\ref{claim-discrete-image} in the proof of Theorem~\ref{theorem-Tits} that $\im h$ is a discrete subgroup of $\bR^{n-m-1}$, i.e., $\im h \isom \bZ^{\oplus \ell}$ for some $\ell \le n-m-1$.
So the prescribed dynamical rank estimate follows and we complete the proof of Lemma~\ref{lemma-G-fib-rank}.
\end{proof}

On the other hand, we shall see that if the $G$-equivariant surjective morphism $\pi \colon X \to Y$ is generically finite,
then the dynamical ranks of $G|_X$ and $G|_Y$ are the same.
As a consequence, the dynamical rank $\dr(G)$ is an invariant of a $G$-equivariant birational morphism.

\begin{lemma} \label{lemma-inv-dyn-rank}
Let $\pi\colon X\to Y$ be a $G$-equivariant generically finite surjective morphism of projective varieties.
Then after replacing $G$ by a finite-index subgroup, $G|_{N^1(X)}$ is solvable and Z-connected if and only if so is $G|_{N^1(Y)}$.
In particular, by Lemma~\ref{lemma-inv-dyn-deg}, $\dr(G|_X) = \dr(G|_Y)$.
\end{lemma}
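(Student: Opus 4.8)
The plan is to analyse the restriction homomorphism $G|_{\N^1(X)}\to G|_{\N^1(Y)}$ induced by $\pi^{*}$, and to reduce the whole statement to the claim that its kernel is virtually solvable. As $\pi$ is surjective and generically finite, $\pi^{*}\colon\N^1(Y)\injmap\N^1(X)$ is injective (see \cite[Remark~2.13]{FL17}, as in the proof of Lemma~\ref{lemma-inv-dyn-deg}), and applying functoriality of pullback to $\pi\circ g|_{X}=g|_{Y}\circ\pi$ shows that $\pi^{*}\N^1(Y)\subseteq\N^1(X)$ is $g^{*}$-invariant for every $g\in G$, with $g^{*}$ acting on it through $(g|_{Y})^{*}$ under the identification $\pi^{*}$. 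Restriction to this subspace therefore yields a surjective group homomorphism $G|_{\N^1(X)}\surjmap G|_{\N^1(Y)}$ which, on Zariski closures, is a morphism of algebraic groups; let $K$ denote its kernel, so that $1\to K\to G|_{\N^1(X)}\to G|_{\N^1(Y)}\to 1$ is exact. The implication ``$G|_{\N^1(X)}$ solvable and Z-connected $\Rightarrow$ $G|_{\N^1(Y)}$ solvable and Z-connected'' is then immediate and needs no passage to a finite-index subgroup: $G|_{\N^1(Y)}$ is a homomorphic image of $G|_{\N^1(X)}$, hence solvable, and its Zariski closure is the image of a connected algebraic group, hence connected. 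So the real content is the reverse implication, and for it, by \cite[Remark~3.10]{DHZ15}, it suffices to prove that $G|_{\N^1(X)}$ is \emph{virtually solvable} whenever $G|_{\N^1(Y)}$ is.

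The heart of the matter is that every element of $K$ has spectral radius $1$. Fix an ample divisor $H$ on $Y$; since $\pi$ is generically finite (so $\dim X=\dim Y$), $\pi^{*}H$ is nef and big on $X$, exactly the fact used in the proof of Lemma~\ref{lemma-inv-dyn-deg}. If $g\in G$ maps into $K$, then $g^{*}$ fixes the class $\pi^{*}H$, hence $(g^{m})^{*}(\pi^{*}H)\cdot(\pi^{*}H)^{n-1}=(\pi^{*}H)^{n}$ for all $m$, and Lemma~\ref{lemma-alt-def-deg} gives $d_{1}(g|_{X})=\rho\big(g^{*}|_{\N^1(X)}\big)=1$. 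Since $g^{*}$ preserves the integral structure of $\N^1(X)=\NS_\bR(X)$, its characteristic polynomial is monic with integer coefficients and constant term $\pm1$; as all of its roots have absolute value at most $1$ while their product has absolute value $1$, every root has absolute value exactly $1$, and each root, being an algebraic integer whose Galois conjugates lie among those roots, is a root of unity by Kronecker's theorem. Thus $K$ is a subgroup of $\GL_{\rho(X)}(\bZ)$ all of whose elements are quasi-unipotent. Intersecting $K$ with a torsion-free congruence subgroup of $\GL_{\rho(X)}(\bZ)$ of suitable level (e.g.\ reduction modulo $6$), on which every quasi-unipotent element is genuinely unipotent, and applying Kolchin's theorem to the resulting finite-index subgroup of unipotent matrices, we conclude that $K$ is virtually nilpotent, in particular virtually solvable.

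It then follows that $G|_{\N^1(X)}$, being an extension of the virtually solvable group $G|_{\N^1(Y)}$ by the virtually solvable group $K$, is virtually solvable by \cite[Lemma~5.5]{Dinh12}; replacing $G$ by an appropriate finite-index subgroup and invoking \cite[Remark~3.10]{DHZ15} once more makes $G|_{\N^1(X)}$ solvable and Z-connected, which settles the equivalence. For the final clause, once both dynamical ranks are defined, Lemma~\ref{lemma-inv-dyn-deg} shows that an element is of positive entropy on $X$ if and only if it is of positive entropy on $Y$; hence the surjection $G|_{X}\surjmap G|_{Y}$ carries $N(G|_{X})$ onto $N(G|_{Y})$ and $N(G|_{X})$ is precisely the preimage of $N(G|_{Y})$, so it induces an isomorphism $(G|_{X})/N(G|_{X})\isom(G|_{Y})/N(G|_{Y})$, and therefore $\dr(G|_{X})=\dr(G|_{Y})$.

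The only genuinely delicate step I foresee is the passage in the second paragraph from ``$K$ is a group of quasi-unipotent integral matrices'' to ``$K$ is virtually solvable'': one must pick the congruence level so that, at once, the congruence subgroup is torsion-free and no nontrivial root of unity can occur as an eigenvalue of a matrix congruent to the identity (this is what makes such elements unipotent and lets Kolchin's theorem apply). Everything else is formal manipulation of the exact sequence together with standard facts about solvable and connected linear algebraic groups and about the invariance of intersection numbers under automorphisms.
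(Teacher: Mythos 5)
Your proof is correct and follows the same skeleton as the paper's: reduce to showing the kernel $K$ of the restriction homomorphism $G|_{\N^1(X)}\surjmap G|_{\N^1(Y)}$ is virtually solvable, by noting that $K$ fixes the nef and big class $\pi^*H_Y$ and is therefore of null entropy by Lemma~\ref{lemma-alt-def-deg}, then apply \cite[Lemma~5.5]{Dinh12} to the exact sequence. The one place you diverge is in establishing that the null-entropy kernel is virtually solvable: the paper simply cites \cite[Theorem~2.2]{CWZ14} (which asserts such a group is virtually unipotent, the proof being a linear algebraic group argument valid in arbitrary characteristic), whereas you re-derive this from first principles via Kronecker's theorem (spectral radius $1$ plus integrality forces eigenvalues to be roots of unity, so each element is quasi-unipotent), passage to the congruence subgroup of level $6$ (torsion-free by Minkowski, and a quasi-unipotent element there is genuinely unipotent since $6$ has two distinct prime divisors), and Kolchin's theorem to conclude nilpotence. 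Your elementary route is a valid substitute for the citation, and you correctly flag the delicate point about choosing the congruence level; the remaining ingredients---the easy forward direction, the extension lemma, and the transfer of dynamical rank through Lemma~\ref{lemma-inv-dyn-deg}---match the paper's.
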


\begin{proof}
The `only if' direction is easy (see also the very beginning part of the proof of Lemma~\ref{lemma-G-fib-rank}).
Also, being Z-connected is automatically true after replacing $G$ by a finite-index subgroup.
So it suffices to show that if $G|_{N^1(Y)}$ is solvable then $G|_{N^1(X)}$ is virtually solvable.
Consider the natural restriction group homomorphism $G|_{\N^1(X)} \surjmap G|_{\pi^*\N^1(Y)} \isom G|_{\N^1(Y)}$ as in the proof of Lemma~\ref{lemma-G-fib-rank}.
Let $K|_{\N^1(X)}$ denote its kernel, where $K\le G$ is a subgroup of $G$.
Choose any ample divisor $H_Y$ on $Y$.
Then $K$ fixes the nef and big divisor $\pi^*H_Y$ (since $\pi$ is generically finite).
It follows from Lemma~\ref{lemma-alt-def-deg} that $K$ is of null entropy.
So by \cite[Theorem~2.2]{CWZ14} (in fact, its proof in \cite{CWZ14} is a linear algebraic group argument and holds in our situation as well),
$K|_{\N^1(X)}$ is virtually unipotent and hence virtually solvable.
Then it follows from \cite[Lemma~5.5]{Dinh12} that $G|_{N^1(X)}$ is virtually solvable.
Lastly, we note that replacing $G$ by a finite-index subgroup will not change the dynamical rank.
So we prove the lemma.
\end{proof}

Thanks to the lemma above, we are able to weaken the condition on $\pi$ in Lemma~\ref{lemma-G-fib-rank} to a $G$-equivariant dominant rational map.
For the sake of completeness, we give the full statement as follows.

\begin{lemma} \label{lemma-G-rat-fib-rank}
Let $\pi\colon X\ratmap Y$ be a $G$-equivariant dominant rational map of projective varieties with $n = \dim X > \dim Y = m > 0$.
Suppose that $G|_{\N^1(X)}$ is solvable and Z-connected.
Then we have $\dr(G|_X) \le \dr(G|_Y) + n - m - 1$.
In particular, $\dr(G|_X) \le n - 2$ as $\dr(G|_Y) \le m - 1$.
\end{lemma}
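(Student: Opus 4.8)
The plan is to reduce to the morphism case already settled in Lemma~\ref{lemma-G-fib-rank}, by resolving the dominant rational map $\pi$ through its graph. Let $\Gamma \subseteq X \times Y$ be the closure (with its reduced structure) of the graph of $\pi$, with projections $p \colon \Gamma \to X$ and $q \colon \Gamma \to Y$. Then $\Gamma$ is a projective variety, $p$ is a birational morphism (so $\dim \Gamma = n$ and, being birational between projective varieties, $p$ is surjective), and $q$ is a surjective morphism since $\pi$ is dominant. The key point is that the diagonal $G$-action on $X \times Y$ assembled from $G|_X$ and $G|_Y$ is biregular and preserves $\Gamma$ --- precisely because $\pi$ is $G$-equivariant, i.e. $\pi \circ g|_X = g|_Y \circ \pi$ as rational maps for every $g \in G$ --- so $G$ acts biregularly on $\Gamma$ and both $p$ and $q$ are $G$-equivariant. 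In the spirit of this paper, note that we do not (and cannot) resolve the singularities of $\Gamma$; none of the lemmas invoked below requires smoothness.

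Granting this, I would finish in two steps. First, apply Lemma~\ref{lemma-inv-dyn-rank} to the $G$-equivariant generically finite (indeed birational) surjective morphism $p \colon \Gamma \to X$: since $G|_{\N^1(X)}$ is solvable and Z-connected, after replacing $G$ by a finite-index subgroup the induced group $G|_{\N^1(\Gamma)}$ is again solvable and Z-connected, and $\dr(G|_\Gamma) = \dr(G|_X)$. Second, apply Lemma~\ref{lemma-G-fib-rank} to the $G$-equivariant surjective morphism $q \colon \Gamma \to Y$, which is permissible because $\dim \Gamma = n > m = \dim Y > 0$ and $G|_{\N^1(\Gamma)}$ is solvable and Z-connected; this gives $\dr(G|_\Gamma) \le \dr(G|_Y) + n - m - 1$. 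Chaining the two identities and the inequality yields $\dr(G|_X) = \dr(G|_\Gamma) \le \dr(G|_Y) + n - m - 1$, and since $\dr(G|_Y) \le m - 1$ we also get $\dr(G|_X) \le n - 2$.

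The genuinely delicate part is not any estimate but the equivariance bookkeeping: one must check carefully that the $G$-action descends to a biregular action on $\Gamma$ and that $q$ is $G$-equivariant in the precise sense demanded by Lemma~\ref{lemma-G-fib-rank}, keeping in mind that the $G$-actions on $X$, $Y$ and $\Gamma$ may all be non-faithful (which is harmless here, as everywhere in the paper). One must also observe that the finite-index replacement forced by Lemma~\ref{lemma-inv-dyn-rank} changes neither $\dr(G|_X)$ nor $\dr(G|_Y)$ --- this is the independence of the dynamical rank of the choice of finite-index subgroup recorded just after Theorem~\ref{theorem-Tits} --- so that the final inequality is valid for the original group $G$.
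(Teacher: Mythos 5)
Your proof is correct and follows essentially the same route as the paper: resolve $\pi$ through the (closure of the) graph $\Gamma \subset X \times Y$, invoke Lemma~\ref{lemma-inv-dyn-rank} on the birational projection $p\colon\Gamma\to X$, then Lemma~\ref{lemma-G-fib-rank} on $q\colon\Gamma\to Y$. The only cosmetic difference is that the paper further normalizes $\Gamma$, which is unnecessary since neither cited lemma requires normality; also note that the $G$-actions on $X$ and $\Gamma$ are in fact faithful here (since $G\le\Aut(X)$ and $p$ is surjective), though as you say this is immaterial.
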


\begin{proof}
Let $\Gamma$ be the normalization of the main component of the closure of the graph of $\pi$ in $X \times Y$.
Then $p\colon \Gamma \to X$ is a surjective birational morphism such that $q = \pi \circ p\colon \Gamma \to Y$ is a surjective morphism.
The $G$-actions on $X$ and $Y$ naturally lift to a biregular faithful action on $\Gamma$ so that both $p$ and $q$ are $G$-equivariant.
Then the lemma follows by applying Lemma~\ref{lemma-inv-dyn-rank} to $p$ and Lemma~\ref{lemma-G-fib-rank} to $q$.
\end{proof}

Using the same argument as in the proof of Lemma~\ref{lemma-G-rat-fib-rank}, one can show that the dynamical rank $\dr(G)$ is actually an invariant of a $G$-equivariant birational map.

\begin{lemma} \label{lemma-bir-inv-dyn-rank}
Let $\pi\colon X\ratmap Y$ be a $G$-equivariant generically finite dominant rational map of projective varieties.
Then we have $\dr(G|_X) = \dr(G|_Y)$.
\qed
\end{lemma}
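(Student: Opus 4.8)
The plan is to run the same graph-resolution trick used in the proof of Lemma~\ref{lemma-G-rat-fib-rank}, but now to invoke Lemma~\ref{lemma-inv-dyn-rank} on \emph{both} legs of the resulting correspondence; this is legitimate here precisely because the generic finiteness of $\pi$ propagates to the second leg.

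First I would let $\Gamma$ be the normalization of the main component of the closure of the graph of $\pi$ inside $X \times Y$, and denote by $p\colon \Gamma \to X$ and $q\colon \Gamma \to Y$ the two projections. Then $p$ is a surjective birational morphism, $q$ is a surjective morphism, and $q = \pi \circ p$ on the domain of definition of $\pi$; since $p$ is birational and $\pi$ is generically finite, $q$ is generically finite as well. Exactly as in Lemma~\ref{lemma-G-rat-fib-rank}, the $G$-actions on $X$ and on $Y$ --- which are intertwined by $\pi$ --- lift to a single biregular faithful $G$-action on $\Gamma$, viewed first as a $G$-stable subvariety of $X \times Y$ and then normalized, and for this action both $p$ and $q$ are automatically $G$-equivariant.

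Next, passing to a finite-index subgroup of $G$ where needed (which changes none of the dynamical ranks involved, and which also lets us assume $G|_{\N^1(X)}$ is solvable and Z-connected), I would apply Lemma~\ref{lemma-inv-dyn-rank} to the $G$-equivariant generically finite surjective morphism $p\colon \Gamma \to X$ to get $\dr(G|_\Gamma) = \dr(G|_X)$, and then apply the same lemma to the $G$-equivariant generically finite surjective morphism $q\colon \Gamma \to Y$ to get $\dr(G|_\Gamma) = \dr(G|_Y)$. Comparing the two equalities yields $\dr(G|_X) = \dr(G|_Y)$.

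The hard part, such as it is, is bookkeeping rather than substance: one has to check that $q$ is genuinely generically finite (this is exactly where the hypothesis on $\pi$ is used, together with the birationality of $p$) and that $\Gamma$ carries one biregular $G$-action making both projections equivariant (this follows from the functoriality of the graph closure and of normalization). Granting these, the statement is an immediate consequence of two applications of Lemma~\ref{lemma-inv-dyn-rank}, with no new input required.
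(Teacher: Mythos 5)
Your proposal is correct and is precisely the argument the paper indicates (but does not write out): resolve the rational map by the normalized graph $\Gamma$, observe that both $p\colon\Gamma\to X$ and $q\colon\Gamma\to Y$ are $G$-equivariant generically finite surjective morphisms, and apply Lemma~\ref{lemma-inv-dyn-rank} twice to conclude $\dr(G|_X)=\dr(G|_\Gamma)=\dr(G|_Y)$. Your remark that generic finiteness of $\pi$ together with birationality of $p$ gives generic finiteness of $q$ is exactly the point that upgrades the argument of Lemma~\ref{lemma-G-rat-fib-rank} to an equality here.
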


Now, we are ready to apply Lemma~\ref{lemma-G-rat-fib-rank} to the Iitaka fibration and obtain the dynamical rank estimate in terms of those $\kappa$'s in Theorem~\ref{theorem-max-rank}.
See \S\ref{subsubsec-kappa} for their precise definitions.

\begin{lemma} \label{lemma-max-rank-kappa}
Let $X$ be a projective variety of dimension $n\ge 2$, and $G$ a subgroup of $\Aut(X)$ such that $G|_{\N^1(X)}$ is solvable and Z-connected.
Then the following assertions hold.
\begin{enumerate}[{\em (1)}]
\item Let $\nu \colon X^\nu \to X$ be the normalization of $X$.
Then $\dr(G|_X) = \dr(G|_{X^\nu}) \le \max\{0, n-1-\kappa(\omega_{X^\nu})\}$, where $\kappa(\omega_{X^\nu})$ is the Kodaira--Iitaka dimension of $X^\nu$.
\item Let $\kappa(X)$ be the Kodaira dimension of $X$. Then $\dr(G|_X) \le \max\{0, n-1-\kappa(X)\}$.
\item Suppose that $\dr(G|_X) = n-1$. Then $\kappa(\omega_{X^\nu}) \le 0$ and $\kappa(X) \le 0$.
\end{enumerate}
\end{lemma}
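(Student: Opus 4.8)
The plan is to reduce everything to applying Lemma~\ref{lemma-G-rat-fib-rank} to a suitable $G$-equivariant fibration, namely the Iitaka fibration associated to the relevant canonical-type divisor, together with the birational-invariance statements Lemmas~\ref{lemma-inv-dyn-rank} and \ref{lemma-bir-inv-dyn-rank}. The first equality $\dr(G|_X) = \dr(G|_{X^\nu})$ in assertion (1) is immediate from Lemma~\ref{lemma-inv-dyn-rank} applied to the normalization morphism $\nu\colon X^\nu \to X$, which is finite and automatically $G$-equivariant (it is functorial), after first replacing $G$ by a finite-index subgroup so that $G|_{\N^1(X^\nu)}$ is solvable and Z-connected; this replacement does not affect the dynamical rank. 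So for (1) we may assume $X$ itself is normal and bound $\dr(G|_X)$ by $\max\{0, n-1-\kappa(\omega_X)\}$.

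For the normal case, I would argue as follows. If $\kappa(\omega_X) = -\infty$ or $0$, then $\max\{0, n-1-\kappa(\omega_X)\} \ge n-1$ and there is nothing to prove, since $\dr(G|_X) \le n-1$ always by Theorem~\ref{theorem-Tits}(\ref{theorem-Tits-ass1}). So assume $\kappa \coloneqq \kappa(\omega_X)$ satisfies $1 \le \kappa \le n-1$ (the case $\kappa = n$ gives the bound $0$, which forces a separate small argument — see below). By \cite[Lemma~A.7]{Patakfalvi18} there is an Iitaka fibration $\iota\colon X \ratmap Y$ with $\dim Y = \kappa$, which is canonical up to birational equivalence on the base; since the $G$-action on $X$ permutes the pluricanonical sections $H^0(X,\omega_X^{[m]})$ (because $g^*\omega_X \cong \omega_X$ canonically for an automorphism $g$, $\omega_X$ being intrinsic to $X$), the associated rational maps $\phi_{\omega_X^{[m]}}$ are $G$-equivariant, and hence so is the Iitaka fibration $\iota$. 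Now apply Lemma~\ref{lemma-G-rat-fib-rank} with this $\pi = \iota$, $m = \kappa$: we get
\[
\dr(G|_X) \le \dr(G|_Y) + n - \kappa - 1 \le (\kappa - 1) + n - \kappa - 1 = n - 2,
\]
but in fact the sharper bound is $\dr(G|_X) \le \dr(G|_Y) + n - \kappa - 1 \le (m-1) + n - m - 1$; to land at $n-1-\kappa$ I would instead note that $\dr(G|_Y) \le \dr(\{1\}|_Y)$ is not the right slack. The correct route: the key input is that the general fibre $F$ of the Iitaka fibration has $\kappa(\omega_F) = 0$, so one wants the bound $\dr(G|_X) \le (\dim F - 1) + \dr(G|_Y)$ with $\dim F = n - \kappa$; combined with $\dr(G|_Y) \le \dim Y = \kappa$ this gives $n - \kappa - 1 + \kappa = n-1$, which is too weak, so one genuinely needs the improved inequality $\dr(G|_X) \le \dr(G|_Y) + n - m - 1$ of Lemma~\ref{lemma-G-rat-fib-rank} together with the observation that over the Iitaka base the action on $Y$ still only contributes $\dr(G|_Y)$ and $n - m - 1 = n - \kappa - 1$ exactly. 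Thus $\dr(G|_X) \le \dr(G|_Y) + n - \kappa - 1 \le (\kappa-1) + n - \kappa - 1$; the discrepancy with the claimed $n-1-\kappa$ is resolved because in the maximal situation $\dr(G|_Y) = 0$: indeed, any positive-entropy element of $G|_Y$ would, by log-concavity and the structure of the Iitaka fibration (the canonical class pulls back, so $G|_Y$ preserves a big class up to scaling — the pushforward of $\omega_X^{[m]}$-sections trivializes the characters on the base), be of null entropy on $Y$. I would make this last point precise exactly as in Step~\ref{claim-Ker=N}: the characters $\chi_1,\dots,\chi_m$ produced from the base are forced to be trivial because $B_1 \cdots B_m$ is (a multiple of) the class of an $\bR$-divisor proportional to a pluri-canonical divisor on $Y$, which is $G|_Y$-invariant; hence $\dr(G|_Y) = 0$ and the bound $\dr(G|_X) \le n - \kappa - 1$ follows. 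The case $\kappa = n$ (so $X$ is of general type): then the Iitaka fibration is birational, $Y$ is birational to $X$, and $G|_Y$ preserves an ample class (a large pluricanonical polarization on the canonical model), so $\dr(G|_X) = \dr(G|_Y) = 0$ by Lemma~\ref{lemma-alt-def-deg} and \cite[Theorem~2.2]{CWZ14}, matching $\max\{0, n-1-n\} = 0$.

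Assertion (2) is proved identically, replacing $\omega_{X^\nu}$ by the function-field Kodaira dimension $\kappa(X)$ of \cite[Definition~5.1]{Luo87}; since $\kappa(X)$ depends only on $\bk(X)$ it is a birational invariant, the corresponding Iitaka fibration exists on a suitable birational model, it is $G$-equivariant (the $G$-action on $\bk(X)$ preserves the relevant pluricanonical-type linear systems), and $\dr(G)$ is a $G$-equivariant birational invariant by Lemma~\ref{lemma-bir-inv-dyn-rank}; apply Lemma~\ref{lemma-G-rat-fib-rank} verbatim. Assertion (3) is then just the contrapositive: if $\dr(G|_X) = n-1$ then from (1) we get $n-1 \le \max\{0, n-1-\kappa(\omega_{X^\nu})\}$, forcing $n-1-\kappa(\omega_{X^\nu}) \ge n-1$, i.e. $\kappa(\omega_{X^\nu}) \le 0$; likewise from (2), $\kappa(X) \le 0$.

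The main obstacle I anticipate is verifying cleanly that $\dr(G|_Y) = 0$ on the Iitaka base — equivalently, that no positive-entropy element of $G$ survives on $Y$. The cheap bound from Lemma~\ref{lemma-G-rat-fib-rank} alone only yields $\dr(G|_X) \le n-2$, not $n-1-\kappa$; the improvement to $n-1-\kappa$ rests entirely on the fact that the Iitaka fibration carries a $G$-invariant big class on its base (coming from pushed-forward pluricanonical forms), which kills all the base characters $\chi_1,\dots,\chi_m$ and hence all the base dynamical rank. Making this rigorous requires the existence and $G$-equivariance of the Iitaka fibration in positive characteristic — which is exactly what \cite[Lemma~A.7]{Patakfalvi18} (for $\kappa(\omega_X)$) and Luo/Abramovich (for $\kappa(X)$, as used in Lemma~\ref{lemma-max-rank-q}) provide — and then a short character-vanishing argument modeled on Step~\ref{claim-Ker=N}. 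Everything else is bookkeeping with the lemmas already established.
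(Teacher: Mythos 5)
Your approach is essentially the paper's approach for assertion (1), with the same structural skeleton: reduce to the normal case via Lemma~\ref{lemma-inv-dyn-rank}, apply Lemma~\ref{lemma-G-rat-fib-rank} to the $G$-equivariant Iitaka fibration $\phi_{\omega_X^{[m]}}\colon X\ratmap Y$, and close with the observation that $\dr(G|_Y)=0$. You do reach that last point, but your justification of it is more roundabout than it needs to be; the clean argument (which the paper gives) is that $G|_Y$ lands in $\Aut(\bP^N_\bk, Y)$, the subgroup of $\Aut(\bP^N_\bk)$ stabilizing $Y\subset\bP^N_\bk=\bP H^0(X,\omega_X^{[m]})$, and so fixes the ample hyperplane class $H_Y$; hence $G|_Y$ is of null entropy by Lemma~\ref{lemma-equiv-dyn-deg}, and $\dr(G|_Y)=0$ immediately. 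Your discussion of general fibres, log-concavity, and ``the quasi-nef sequence $B_1,\dots,B_m$ being proportional to pluricanonical divisors on $Y$'' is extraneous and not quite a precise statement; you should replace it by the one-line argument above.

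Where you genuinely diverge is assertion (2). You propose to re-run the entire Iitaka-fibration argument for Luo's function-field Kodaira dimension $\kappa(X)$, which would require constructing a $G$-equivariant Iitaka fibration on some birational model for the Luo invariant and then invoking Lemma~\ref{lemma-bir-inv-dyn-rank}. That can presumably be made to work, but it is more labour and leaves you with nontrivial details to check (in particular, that the Luo fibration can be made $G$-equivariant compatibly with a $G$-equivariant birational model). The paper sidesteps this entirely: by \cite[Proposition~B.1]{Patakfalvi18}, $\kappa(X^\nu)\le\kappa(\omega_{X^\nu})$, and since Luo's $\kappa$ is birational, $\kappa(X)=\kappa(X^\nu)$. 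Therefore $\max\{0,\,n-1-\kappa(\omega_{X^\nu})\}\le\max\{0,\,n-1-\kappa(X)\}$, and (2) follows formally from (1) with no new geometry. Assertion (3) is, as you say, the contrapositive. So the diagnosis: assertion (1) is right and matches the paper modulo a cleaner justification of $\dr(G|_Y)=0$; for (2) your route is workable in principle but strictly harder than the paper's two-line deduction from (1).
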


\begin{proof}
It suffices to show the first assertion.
In fact, by \cite[Proposition~B.1]{Patakfalvi18}, we know that $\kappa(X^\nu) \le \kappa(\omega_{X^\nu})$.
We also have $\kappa(X) = \kappa(X^\nu)$, since Luo's $\kappa(X)$ is a birational invariant of $X$.
Then it is easy to verify that $\max\{0, n-1-\kappa(\omega_{X^\nu})\} \le \max\{0, n-1-\kappa(X)\}$.
So the assertion (2) follows directly from the first one.
The third assertion is a consequence of the first two.

Note that the normalization $\nu$ is a $G$-equivariant finite surjective morphism.
So by Lemma~\ref{lemma-inv-dyn-rank}, we have the equality $\dr(G|_X) = \dr(G|_{X^\nu})$.
To prove the assertion (1), it remains to show the inequality.
For simplicity, we may assume that $X$ itself is normal and suppress the notation $X^\nu$ from now on.
First, it is known that for sufficiently large $m$, there exists a rational map
$$\phi \coloneqq \phi_{\omega_X^{[m]}}\colon X\ratmap Y \subseteq \bP_\bk^N \coloneqq \bP H^0(X, \omega_X^{[m]}),$$
where $Y$ is the image of $\phi$ satisfying $\dim Y = \kappa(\omega_X)$.
Also, the $G$-action on $X$ descends to a linear action on $\bP_\bk^N$ via pullback of sections, so does on $Y$.
Hence $G|_Y$ is a subgroup of $\Aut(\bP_\bk^N, Y) \coloneqq \{g\in \Aut(\bP_\bk^N) : g(Y) = Y\}$.
Let $H_Y$ denote the restriction of some hyperplane $H$ on $\bP_\bk^N$ to $Y$.
Then $g^*H_Y \sim H_Y$ for any $g\in G$ and hence $G|_Y$ is of null entropy (see Lemma~\ref{lemma-equiv-dyn-deg}).
In other words, $\dr(G|_Y) = 0$.
If $\kappa(\omega_X) = n$, then $\phi$ is a $G$-equivariant birational map so that $\dr(G|_X) = \dr(G|_Y) = 0$ by Lemma~\ref{lemma-bir-inv-dyn-rank}.
It remains to consider the case that $0 < \kappa(\omega_X) < n$.
Applying Lemma~\ref{lemma-G-rat-fib-rank} to $\phi\colon X \ratmap Y$, we have $\dr(G|_X)\le \dr(G|_Y) + n - \dim Y - 1 = n - \kappa(\omega_X) - 1$.
Hence the lemma follows.
\end{proof}

\begin{remark} \label{remark-max-rank-anti-kappa}
The proof of Lemma~\ref{lemma-max-rank-kappa}(1) remains valid if we replace $\kappa(\omega_X)$ by the anti-Kodaira--Iitaka dimension $\kappa^{-1}(\omega_X)$; see \S\ref{subsubsec-kappa} for its definition.
In other words, as in the assertion (1), the dynamical rank $\dr(G|_X) = \dr(G|_{X^\nu}) \le \max\{0, n-1-\kappa^{-1}(\omega_{X^\nu})\}$.
In particular, if $\dr(G|_X) = n-1$, then $\kappa^{-1}(\omega_{X^\nu}) \le 0$.
\end{remark}

Lastly, to complete the proof of Theorem~\ref{theorem-max-rank}, we only need to apply Lemma~\ref{lemma-G-rat-fib-rank} to the Albanese morphism $\alb_X$ and the Albanese map $\biralb_X$.
See \S\ref{subsubsec-alb} for their precise definitions.

\begin{lemma} \label{lemma-max-rank-q}
Let $X$ be a projective variety of dimension $n\ge 2$, and $G$ a subgroup of $\Aut(X)$ such that the dynamical rank $\dr(G) = n-1$.
Then the following assertions hold.
\begin{enumerate}[{\em (1)}]
\item The irregularity $q(X) = 0$ or $n$.
If the latter case happens, i.e., $q(X) = n$, then the Albanese morphism $\alb_X$ is generically finite surjective.
\item The birational irregularity $\tilde q(X) = 0$ or $n$.
In the latter case, i.e., $\tilde q(X) = n$, the Albanese map $\biralb_X$ is generically finite dominant.
\end{enumerate}
\end{lemma}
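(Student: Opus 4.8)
The plan is to apply Lemma~\ref{lemma-G-rat-fib-rank} to the Albanese morphism $\alb_X$ and the Albanese map $\biralb_X$, and then to pin down their images with the help of Lemma~\ref{lemma-max-rank-kappa} and the structure theory of subvarieties of abelian varieties. First I would reduce to the case where $G|_{\N^1(X)}$ is solvable and Z-connected: by Theorem~\ref{theorem-Tits}(\ref{theorem-Tits-ass1}) this holds after replacing $G$ by a finite-index subgroup, and such a replacement changes neither $q(X)$, $\tilde q(X)$, nor $\dr(G) = n-1$.

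For assertion~(1): The universal property of $A \coloneqq \Alb(X)$ shows that every $g \in \Aut(X)$ induces an affine automorphism of $A$ compatible with $\alb_X$, so $G$ acts biregularly (possibly non-faithfully) on $A$; since this action sends $\alb_X(X)$ onto $\alb_X(g(X)) = \alb_X(X)$, it preserves the closed subvariety $Y \coloneqq \alb_X(X)$, and $\alb_X \colon X \to Y$ becomes a $G$-equivariant surjective morphism. Now $\dim Y \le n$, and there are three possibilities. If $0 < \dim Y < n$, then Lemma~\ref{lemma-G-rat-fib-rank} applied to $\alb_X \colon X \to Y$ yields $\dr(G|_X) \le n-2$, contradicting $\dr(G) = n-1$. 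If $\dim Y = 0$, then $Y$ is a point, $A$ is trivial, and $q(X) = 0$. If $\dim Y = n$, then $\alb_X \colon X \to Y$ is generically finite, so by Lemmas~\ref{lemma-inv-dyn-rank} and~\ref{lemma-bir-inv-dyn-rank} (after a harmless passage to a finite-index subgroup) the group $G|_{\N^1(Y)}$ is still solvable and Z-connected and $\dr(G|_Y) = \dr(G|_X) = n-1 = \dim Y - 1$; Lemma~\ref{lemma-max-rank-kappa} then gives $\kappa(Y) \le 0$. As $Y$ is a closed subvariety of the abelian variety $A$, Ueno's structure theorem (available in our setting through the definition of $\kappa$ recalled in \S\ref{subsubsec-kappa}) forces $\kappa(Y) = 0$ and shows that $Y$ is a translate of an abelian subvariety of $A$; since $Y$ generates $A$ by the universal property of the Albanese, that abelian subvariety is all of $A$, i.e., $Y = A$. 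Therefore $q(X) = \dim A = n$ and $\alb_X$ is generically finite and surjective.

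Assertion~(2) is proved by the same argument with $A$ replaced by $\tilde A \coloneqq \birAlb(X)$, the morphism $\alb_X$ by the rational map $\biralb_X \colon X \ratmap \tilde A$, and $Y$ by the closure $\tilde Y$ of the image of $\biralb_X$; the needed $G$-equivariance comes from the universal property of $(\tilde A, \biralb_X)$ for dominant rational maps, and Lemmas~\ref{lemma-G-rat-fib-rank}, \ref{lemma-bir-inv-dyn-rank} and~\ref{lemma-max-rank-kappa} together with Ueno's theorem apply verbatim, giving $\tilde q(X) \in \{0, n\}$ and, when $\tilde q(X) = n$, that $\biralb_X$ is generically finite dominant.

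The routine ingredients are the $G$-equivariance of $\alb_X$ and $\biralb_X$ (with biregularity of the induced action on the image), which follow from the respective universal properties, and the dynamical-rank bookkeeping, which is already packaged in Lemmas~\ref{lemma-G-rat-fib-rank}--\ref{lemma-max-rank-kappa}. The step I expect to be the main obstacle is the case $\dim Y = n$: ruling out $q(X) > n$, i.e., showing that the $\dr$-maximal $n$-dimensional quotient $Y$ of $X$ sitting inside $\Alb(X)$ must coincide with $\Alb(X)$. This is exactly where one has to invoke both the bound $\dr \le n-1-\kappa$ of Lemma~\ref{lemma-max-rank-kappa} and a structure theorem for subvarieties of abelian varieties, and where one must check that the versions of those results being used are consistent with the positive-characteristic notion of Kodaira dimension adopted in the paper.
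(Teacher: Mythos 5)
Your proposal is correct and follows the same route as the paper: reduce to $G|_{\N^1(X)}$ solvable and Z-connected, use the universal property to obtain $G$-equivariance of $\alb_X$ (resp.\ $\biralb_X$) with biregular action preserving the image $Y$, rule out $0 < \dim Y < n$ via Lemma~\ref{lemma-G-rat-fib-rank}, and in the case $\dim Y = n$ combine Lemma~\ref{lemma-inv-dyn-rank} and Lemma~\ref{lemma-max-rank-kappa} with a structure theorem for subvarieties of abelian varieties to force $Y$ to be an abelian subvariety that generates, hence equals, $\Alb(X)$. The only point worth flagging is the one you yourself identify at the end: in place of the generic ``Ueno's structure theorem,'' the paper invokes the positive-characteristic statement $\kappa(Y) = \dim Y/\Stab(Y) \ge 0$ from Abramovich's work (\cite[Theorem~3]{Abramovich94}), which the paper arranged to be compatible with the chosen definition of $\kappa$ (Luo's) precisely for subvarieties of abelian varieties.
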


\begin{proof}
We show the first assertion only since the second one is exactly the same.
It follows from the universal property of the Albanese morphism that $\alb_X \colon X \to \Alb(X)$ is $G$-equivariant and the image of $\alb_X$, denoted by $Y$, generates the Albanese variety $\Alb(X)$ of $X$.
Suppose that $q(X) = \dim \Alb(X) > 0$.
Then $\dim Y > 0$.
By applying Lemma~\ref{lemma-G-rat-fib-rank} to the $G$-equivariant morphism $X \to Y$,
we have $\dim Y = \dim X = n$ because the dynamical rank $\dr(G|_X) = n-1$.
Namely, $X$ has maximal Albanese dimension and hence $\alb_X$ is a generically finite morphism.
We then claim that $\kappa(Y) = 0$.
First, by Lemma~\ref{lemma-inv-dyn-rank}, $\dr(G|_{Y}) = \dr(G|_X) = n-1$.
It thus follows from Lemma~\ref{lemma-max-rank-kappa} that $\kappa(Y) \le 0$.
On the other hand, \cite[Theorem~3]{Abramovich94} asserts that $\kappa(Y) = \dim Y/B \ge 0$, where $B \coloneqq \Stab(Y)$ is the maximal closed subgroup of $\Alb(X)$ such that $B + Y = Y$.
Then the claim follows.
Therefore, $\dim Y/B = 0$ and hence $Y = B$ is an abelian subvariety of $\Alb(X)$.
This yields that $Y = \Alb(X)$ since $Y$ generates $\Alb(X)$.
So we have eventually showed that the Albanese morphism $\alb_X$ is surjective.
We thus deduce that $q(X) = \dim \Alb(X) = \dim Y = \dim X = n$.
\end{proof}

\begin{remark}
Let $(X, G)$ be as in Lemma~\ref{lemma-max-rank-q}.
In the case that $q(X) = n$ (and hence $\tilde q(X) = n$ automatically), we also have $\kappa(\omega_{X^\nu}) = 0$,
where $X^\nu$ denotes the normalization of $X$.
Indeed, by the maximal dynamical rank assumption, we have seen in Lemma~\ref{lemma-max-rank-kappa} that $\kappa(\omega_{X^\nu}) \le 0$.
We then consider the Albanese morphism $\alb_{X^\nu} \colon X^\nu \to \Alb(X^\nu)$ of $X^\nu$.
Note that $\nu$ is a $G$-equivariant finite surjective morphism.
Hence $q(X^\nu) \ge q(X)$ and $\dr(G|_{X^\nu}) = \dr(G|_X)$ is maximal by Lemma~\ref{lemma-inv-dyn-rank}.
Then according to Lemma~\ref{lemma-max-rank-q}(1), $\alb_{X^\nu}$ is a generically finite surjective morphism.
Applying \cite[Proposition~1.4]{HPZ17} to $\alb_{X^\nu}$, it follows that $\alb_{X^\nu}$ is separable
and hence $\kappa(\omega_{X^\nu}) \ge 0$.
Thus, we show that $\kappa(\omega_{X^\nu}) = 0$.

However, we do not know whether $\kappa(X) = 0$ (in general, it may occur that $\kappa(X) < \kappa(\omega_X)$ for singular $X$; see e.g. \cite[Example~2.1.6]{Lazarsfeld-I}).
Nevertheless, if we assume further that $X$ is smooth, then the two Kodaira dimensions agree and both of them vanish.
In this case, $\alb_X = \biralb_X$ is birational by \cite[Corollary~2]{Kawamata81} in characteristic zero and by \cite[Theorem~0.2]{HPZ17} in positive characteristic.
\end{remark}

\begin{proof}[Proof of Theorem \ref{theorem-max-rank}]
It follows readily from Theorem~\ref{theorem-Tits}, Lemmas~\ref{lemma-max-rank-kappa} and \ref{lemma-max-rank-q}.
\end{proof}


\vskip 1em
\phantomsection
\addcontentsline{toc}{section}{Acknowledgments}
\noindent \textbf{Acknowledgments. }
The author would like to thank De-Qi Zhang for many inspiring discussions and comments. He also thanks Dragos Ghioca and Zinovy Reichstein for their support and helpful comments, Mihai Fulger and Brian Lehmann for answering several questions regarding to their paper \cite{FL17}.
He is grateful to Keiji Oguiso for his help and useful suggestions, the referee for many valuable comments.
Last but not least, the author is deeply indebted to Tomohide Terasoma for providing the appendix which elaborates the reasonability of the finite generation condition in Theorem~\ref{theorem-Tits}(2).






\newpage

\appendix

\pagestyle{appendix}

\section{An example of a solvable subgroup of an automorphism group} \label{appendix}

\begin{center}
{\sc tomohide terasoma}
\end{center}

\vskip 2em

\noindent
This is an example of a finitely generated solvable subgroup of the automorphism group of a projective variety whose homological trivial part is infinitely generated.


\subsection{Free groups generated by two elements}

Let $F_2=\langle a, b\rangle$ be a free group generated by two elements $a,b$.
Let 
$$
F'_2 \coloneqq [F_2,F_2]
\text{ and }
F''_2 \coloneqq [F'_2,F'_2]
$$
be the first and second derived subgroup of $F_2$, respectively.
Then the maximal abelian quotient group 
$F_2^{\ab} \coloneqq F_2/F'_2$ is isomorphic to the free abelian group
of rank $2$ generated by the image $\bar{a}, \bar{b}$ of $a, b$.
The group $F_2$ acts on the abelian group
$(F'_2)^{\ab} \coloneqq F_2'/F_2''$ by the adjoint action 
$\ad(g) \in \Aut((F'_2)^{\ab})$ 
for $g\in F_2$ defined by
$$
\ad(g)(s)=gsg^{-1}.
$$
Since
$$
[\ad(g),\ad(h)]=\ad(g)\ad(h)
\ad(g)^{-1}\ad(h)^{-1}=\ad([g,h])
$$
and
$$
[g,h]w[g,h]^{-1}=w \ {\rm mod }\ F_2''
$$
for any $w\in F_2'$,
we have
$$
\ad(g)\ad(h)=\ad(h)\ad(g)\in \Aut((F'_2)^{\ab}).
$$
Thus the above adjoint action of $F_2$ on $(F'_2)^{\ab}$ induces an adjoint action of $F_2^{\ab}$ on $(F'_2)^{\ab}$.
Therefore, $(F'_2)^{\ab}$ becomes a module over a
(commutative) Laurent polynomial ring $\bZ[\ad(\bar{a})^{\pm 1},\ad(\bar{b})^{\pm 1}]$
generated by two elements
$\ad(\bar{a}),\ad(\bar{b})$.

\begin{lemma}
\label{generation of second abelian}
The group $(F'_2)^{\ab}=F_2'/F_2''$ is generated by $[a,b]$ over $\bZ[\ad(\bar{a})^{\pm 1},\ad(\bar{b})^{\pm 1}]$.
\end{lemma}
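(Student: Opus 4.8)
The plan is to exploit the fact that $F_2'$ is precisely the normal closure of the single element $[a,b]$ in $F_2$. Adding the relation $[a,b]=1$ to $F_2=\langle a,b\rangle$ yields the standard presentation $\langle a,b\mid [a,b]\rangle$ of the free abelian group on $\bar a,\bar b$; since $F_2/\langle\langle [a,b]\rangle\rangle$ is abelian we get $F_2'\subseteq\langle\langle [a,b]\rangle\rangle$, while $[a,b]\in F_2'$ and $F_2'\trianglelefteq F_2$ give the reverse inclusion, so $\langle\langle [a,b]\rangle\rangle=F_2'$. Consequently $F_2'$ is generated \emph{as a group} by the conjugates $\{g[a,b]g^{-1}:g\in F_2\}$, and hence $(F'_2)^{\ab}=F_2'/F_2''$ is generated \emph{as an abelian group} by the images $\overline{g[a,b]g^{-1}}$ for $g\in F_2$.

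First I would record that, by the very definition of the adjoint action on $(F'_2)^{\ab}$, one has $\overline{g[a,b]g^{-1}}=\ad(g)\big(\overline{[a,b]}\big)$. Invoking the fact proved just above in the text that this action factors through $F_2^{\ab}$, I would then observe that for $g\in F_2$ with $\bar g=i\bar a+j\bar b$ the operator $\ad(g)$ equals $\ad(\bar a)^{i}\ad(\bar b)^{j}$ on $(F'_2)^{\ab}$. Therefore $(F'_2)^{\ab}$ is spanned as an abelian group by $\{\ad(\bar a)^{i}\ad(\bar b)^{j}\big(\overline{[a,b]}\big):i,j\in\bZ\}$, which is exactly the assertion that $(F'_2)^{\ab}$ is generated by $\overline{[a,b]}$ as a module over $\bZ[\ad(\bar a)^{\pm1},\ad(\bar b)^{\pm1}]$.

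There is no deep obstacle here; the lemma is essentially a packaging of standard facts, and the only points deserving care are the normal-closure identity $\langle\langle[a,b]\rangle\rangle=F_2'$ (which must be argued without circularly reusing the module structure) and the passage from group generation of $F_2'$ to abelian-group generation of $(F'_2)^{\ab}$. If one prefers an argument with explicit generators, one may instead apply Reidemeister--Schreier to $F_2'\le F_2$ with Schreier transversal $\{a^ib^j\}$: the nontrivial Schreier generators are $c_{i,j}=a^ib^jab^{-j}a^{-i-1}=\ad(a^i)\big([b^j,a]\big)$ with $j\neq 0$, and the telescoping identity $[b^j,a]\equiv\big(1+\ad(\bar b)+\cdots+\ad(\bar b)^{j-1}\big)[b,a]\pmod{F_2''}$ together with $\overline{[b,a]}=-\overline{[a,b]}$ writes each $\overline{c_{i,j}}$ inside $\bZ[\ad(\bar a)^{\pm1},\ad(\bar b)^{\pm1}]\cdot\overline{[a,b]}$, whence the conclusion since the $c_{i,j}$ generate $F_2'$. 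A third route is the Magnus embedding of $F_2/F_2''$ into upper triangular $2\times 2$ matrices over $R=\bZ[\ad(\bar a)^{\pm1},\ad(\bar b)^{\pm1}]$, under which (via Fox derivatives) $(F'_2)^{\ab}$ is identified with the syzygy module $\{(m_1,m_2)\in R^2: m_1(\ad(\bar a)-1)+m_2(\ad(\bar b)-1)=0\}$ and $\overline{[a,b]}$ with the Koszul relation $\big(1-\ad(\bar b),\ \ad(\bar a)-1\big)$; since $\big(\ad(\bar a)-1,\ \ad(\bar b)-1\big)$ is a regular sequence in the domain $R$, that syzygy module is generated by the Koszul relation. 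The normal-closure argument, however, is the shortest and most self-contained, and is the one I would write up.
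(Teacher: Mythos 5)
Your primary (normal-closure) argument is correct and takes a genuinely different route from the paper. The paper establishes the two commutator identities
$[gh,x]=\ad(g)([h,x])+[g,x]$ and $[g,h]=-[h,g]$ in $(F_2')^{\ab}$ and then deduces the lemma by (implicitly) expanding an arbitrary $\overline{[g,h]}$ via these relations, reducing inductively on word length to a module-combination of $\overline{[a,b]}$. You instead bypass commutator expansion entirely: you prove the normal-closure identity $F_2'=\langle\langle[a,b]\rangle\rangle$, conclude that $F_2'$ is generated as a group by the conjugates $g[a,b]g^{-1}$, and then observe that passing to $(F_2')^{\ab}$ these become exactly $\ad(\bar a)^i\ad(\bar b)^j\bigl(\overline{[a,b]}\bigr)$, once one invokes the already-established fact that $\ad$ factors through $F_2^{\ab}$. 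Your approach is more structural and spares the reader the step of seeing how ``the lemma follows from these relations''; the paper's approach has the advantage of producing the bilinearity identities, which are then reused (in the guise of the telescoping formula $[b^j,a]\equiv(1+\ad(\bar b)+\cdots+\ad(\bar b)^{j-1})[b,a]$) if one wants explicit expressions. Your Reidemeister--Schreier route is, in substance, the same as the paper's proof, since it relies on that same telescoping identity; the Magnus-embedding route is a third valid but heavier alternative. All three are sound; the one you propose to write up is a clean improvement in exposition.
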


\begin{proof}
We have
\begin{align*}
[gh,x]=
ghxh^{-1}g^{-1}x^{-1}=
ghxh^{-1}x^{-1}g^{-1}gxg^{-1}x^{-1}=\ad(g)([h,x])\cdot [g,x]
\end{align*}
and
$[g,h]=[h,g]^{-1}$.
By writing the product of $(F'_2)^{\ab}$ additively, we have
$$
[gh,x]=\ad(g)([h,x])+[g,x] \text{ and } [g,h]=-[h,g].
$$
The lemma follows from these relations.
\end{proof}


\subsection{A group homomorphism from \texorpdfstring{$F_2$}{F2} to \texorpdfstring{$\PGL(2,\bbK)$}{PGL(2,K)}}
\label{field K}

Let $\bbK$ be an algebraically closed field whose transcendental degree over the prime field $\bk$ is greater than
or equal to $2$.
Let $\alpha, \beta$ be elements algebraically independent over $\bk$.
We set
$$
\widetilde{A}=
\left(
\begin{matrix}
\alpha & 1 \\
0 & 1 
\end{matrix}\right),
\quad
\widetilde{B}=
\left(
\begin{matrix}
1 & 0 \\
0 & \beta 
\end{matrix}
\right).
$$
The class of $\widetilde{A}, \widetilde{B}$ in 
$\PGL(2,\bbK)=\Aut(\bP_\bbK^1)$ is denoted by $A, B$.
We define a group homomorphism $$\varphi \colon F_2 \longrightarrow \PGL(2,\bbK) \text{ via } \varphi(a)=A, \varphi(b)=B. 
$$
One can easily check the following lemma.

\begin{lemma}
The image of $F_2'$ under $\varphi$ is contained in
$$
N=\Bigg\{
\left(
\begin{matrix}
1 & n \\
0 & 1 
\end{matrix}\right) : n\in \bbK
\Bigg\}.
$$
Since $N$ is an abelian group,
the image of $F_2''$ under $\varphi$ is the identity group.
Thus we have a homomorphism between abelian groups:
\begin{equation}
\label{second abelianization} 
\varphi' \colon (F_2')^{\ab} = F_2'/F_2'' \longrightarrow N.
\end{equation}
\end{lemma}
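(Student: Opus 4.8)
The plan is to use that both $\widetilde A$ and $\widetilde B$ are upper triangular, so that $\varphi(F_2)=\langle A,B\rangle$ lands in the Borel subgroup $\mathbf{B}\le\PGL(2,\bbK)$ of classes of invertible upper triangular $2\times2$ matrices, and that $N$ is precisely the kernel of the ``diagonal character'' of $\mathbf{B}$. For any homomorphism and any subgroup $H$ one has $\varphi([H,H])\subseteq[\varphi(H),\varphi(H)]$, and since $\varphi(F_2)\subseteq\mathbf{B}$ also $[\varphi(F_2),\varphi(F_2)]\subseteq[\mathbf{B},\mathbf{B}]$; so it will suffice to show $[\mathbf{B},\mathbf{B}]\subseteq N$, which is the standard fact that the commutator subgroup of the one-dimensional affine group is its translation subgroup.

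First I would check that the rule
$$\delta\colon \left(\begin{matrix} s & t\\ 0 & u\end{matrix}\right)\longmapsto s/u\in\bbK^\times$$
descends to a well-defined group homomorphism $\delta\colon\mathbf{B}\to\bbK^\times$: it is unchanged by rescaling a representative (a scalar multiplies $s$ and $u$ alike), and multiplicativity is immediate from the product formula for upper triangular matrices. Its kernel consists of the classes with $s=u$, i.e.\ of scalar multiples of $\left(\begin{smallmatrix}1 & t/s\\ 0 & 1\end{smallmatrix}\right)$, which is exactly $N$. Since $\bbK^\times$ is abelian, $\delta$ kills every commutator, hence $[\mathbf{B},\mathbf{B}]\subseteq\Ker\delta=N$, and therefore
$$\varphi(F_2')\subseteq[\varphi(F_2),\varphi(F_2)]\subseteq[\mathbf{B},\mathbf{B}]\subseteq N.$$
This is the first assertion of the lemma.

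For the remaining assertions, note that $N$ is abelian, so $[N,N]=\{1\}$, and thus $\varphi(F_2'')\subseteq[\varphi(F_2'),\varphi(F_2')]\subseteq[N,N]=\{1\}$. Hence $\varphi$ restricted to $F_2'$ is trivial on $F_2''$, so it factors through $F_2'/F_2''=(F_2')^{\ab}$, yielding the homomorphism $\varphi'\colon(F_2')^{\ab}\to N$ of \eqref{second abelianization}. I do not anticipate any genuine difficulty here; the only point meriting a line of care is that the diagonal character $\delta$ is well defined after passing to $\PGL$, and this is a one-line verification.
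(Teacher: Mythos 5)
Your proof is correct. The paper gives no written argument (it merely says ``one can easily check''), and what you've supplied is the natural way to fill that gap: $\varphi(F_2)$ lands in the upper-triangular Borel $\mathbf{B}\le\PGL(2,\bbK)$, the diagonal character $\delta\colon\mathbf{B}\to\bbK^\times$ is well defined on projective classes with kernel exactly $N$, so $[\mathbf{B},\mathbf{B}]\subseteq N$ and the rest follows formally from $\varphi([H,H])\subseteq[\varphi(H),\varphi(H)]$ and the commutativity of $N$.
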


We introduce an $R=\bZ[\xi^{\pm 1},\eta^{\pm 1}]$-module structure on the abelian group $N$ by setting
$$
\xi^i\eta^j
\left(
\begin{matrix}
1 & n \\
0 & 1 
\end{matrix}\right) =
\left(
\begin{matrix}
1 & \alpha^i\beta^{-j}n \\
0 & 1 
\end{matrix}\right).
$$
By a direct calculation, we have the following lemma.

\begin{lemma}
Via the identification between
$\bZ[\ad(\bar{a})^{\pm 1},\ad(\bar{b})^{\pm 1}]$
and
$R=\bZ[\xi^{\pm 1},\eta^{\pm 1}]$ given by 
$$
\xi=\ad(\bar{a}),\quad \eta=\ad(\bar{b}),
$$
the homomorphism $\varphi'$ becomes an $R$-homomorphism.
\end{lemma}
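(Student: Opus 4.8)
The plan is to check $R$-linearity of $\varphi'$ directly, reducing it at once to the two ring generators $\xi=\ad(\bar a)$ and $\eta=\ad(\bar b)$ of $R=\bZ[\xi^{\pm1},\eta^{\pm1}]$. Since $\varphi'\colon(F_2')^{\ab}\to N$ is already a homomorphism of abelian groups by the lemma that produced it in~\eqref{second abelianization}, and since every element of $R$ is a $\bZ$-linear combination of monomials $\xi^i\eta^j$, it suffices to prove
\[
\varphi'\big(\ad(\bar a)\cdot s\big)=\xi\cdot\varphi'(s)
\qquad\text{and}\qquad
\varphi'\big(\ad(\bar b)\cdot s\big)=\eta\cdot\varphi'(s)
\]
for every $s\in(F_2')^{\ab}$. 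The identity $\varphi'(r\cdot s)=r\cdot\varphi'(s)$ for general $r\in R$ then follows by iterating these two relations --- legitimate because the $\ad(\bar a)$- and $\ad(\bar b)$-actions on $(F_2')^{\ab}$ commute, as recorded before Lemma~\ref{generation of second abelian} --- and then taking $\bZ$-linear combinations, using that $\varphi'$ is additive.

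To prove the two displayed relations I would unwind the left-hand sides using that $\varphi$ is a group homomorphism. Lift $s$ to an element of $F_2'$, still written $s$; by the previous lemma $\varphi(s)\in N$, say $\varphi(s)=\left(\begin{smallmatrix}1&n\\0&1\end{smallmatrix}\right)$ with $n\in\bbK$, and by definition $\varphi'(s)$ is this element of $N$. Since $\ad(\bar a)\cdot s$ is represented by $asa^{-1}$, the class $\varphi'(\ad(\bar a)\cdot s)$ equals $\varphi(a)\,\varphi(s)\,\varphi(a)^{-1}$ computed in $\PGL(2,\bbK)$; evaluating this on matrix representatives gives
\[
\widetilde A\begin{pmatrix}1&n\\0&1\end{pmatrix}\widetilde A^{-1}=\begin{pmatrix}1&\alpha n\\0&1\end{pmatrix},
\qquad
\widetilde B\begin{pmatrix}1&n\\0&1\end{pmatrix}\widetilde B^{-1}=\begin{pmatrix}1&\beta^{-1}n\\0&1\end{pmatrix},
\]
both already of the form defining an element of $N$. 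On the other hand, the $R$-module structure on $N$ introduced above, with $(i,j)=(1,0)$ and $(i,j)=(0,1)$ respectively, gives $\xi\cdot\left(\begin{smallmatrix}1&n\\0&1\end{smallmatrix}\right)=\left(\begin{smallmatrix}1&\alpha n\\0&1\end{smallmatrix}\right)$ and $\eta\cdot\left(\begin{smallmatrix}1&n\\0&1\end{smallmatrix}\right)=\left(\begin{smallmatrix}1&\beta^{-1}n\\0&1\end{smallmatrix}\right)$. Comparing the two computations yields exactly the required relations, and the lemma follows.

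There is no real obstacle here: the statement is a bookkeeping verification. The single point that genuinely needs attention is matching the exponent conventions in the $R$-action on $N$ --- in particular the \emph{negative} power $\beta^{-j}$ attached to $\eta^j$ --- so that conjugation by $\widetilde B$, which multiplies the upper-right entry of an element of $N$ by $\beta^{-1}$, corresponds to the action of $\eta$ and not of $\eta^{-1}$; the analogous check for $\widetilde A$ and $\xi$ uses the positive power $\alpha^i$. Once these conventions are pinned down, the identification $\xi=\ad(\bar a)$, $\eta=\ad(\bar b)$ makes $\varphi'$ into an $R$-module homomorphism as claimed.
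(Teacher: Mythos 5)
Your proof is correct and takes the same route the paper intends: the paper states the lemma follows "by a direct calculation," and you have carried out exactly that calculation, checking $R$-linearity on the ring generators $\xi$, $\eta$ via conjugation by $\widetilde A$, $\widetilde B$ and matching the result to the declared $R$-action on $N$ (including the sign convention $\beta^{-j}$, which you correctly flag as the one point needing care).
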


Then the image of the commutator $[a,b] \in F_2'$ under 
the homomorphism $\varphi$
is equal to
$$
[A,B]=\left(
\begin{matrix}
1 & 1-\beta^{-1} \\
0 & 1 
\end{matrix}\right).
$$
By Lemma \ref{generation of second abelian},
$(F_2')^{\ab}$ 
is generated by $[a,b]$ 
as
a $\bZ[\ad(\bar{a})^{\pm 1},\ad(\bar{b})^{\pm 1}]$-module.
Therefore, the image of $\varphi'$ in the abelian group $N$ is equal to
\begin{align}
\label{infinitely generated abelian}
&\bZ[\xi^{\pm 1},\eta^{\pm 1}]\left(
\begin{matrix}
1 & 1-\beta^{-1} \\
0 & 1 
\end{matrix}\right)
\\
\nonumber
=&\Bigg\{\left(
\begin{matrix}
1 & n \\
0 & 1 
\end{matrix}\right) :
n\in \bZ[\alpha^{\pm 1},\beta^{\pm 1}](1-\beta^{-1})
\Bigg\},
\end{align}
which is not finitely generated since our $\alpha$ and $\beta$ are assumed to be algebraically independent over $\bk$. 


\subsection{Homological trivial part of a finitely generated solvable subgroup of automorphisms}

Let $\bbK$ be the field as in \S\ref{field K}.
Let $Y$ be a smooth projective $K3$ surface over $\bbK$
such that the automorphism group $\Aut(Y)$ of $Y$ contains a subgroup $M$ isomorphic to $\bZ^2$.
We choose a generator $\{p,q\}$ of $M$.
We set 
$X = \bP_\bbK^1\times Y$.
We define the {\it homologically trivial part $\Aut^0(X)$} of $\Aut(X)$
by 
$$
\Ker(\rho \colon \Aut(X) \lra \GL(H^*(X,\bQ))).
$$ 
Then $\Aut^0(X)$ is isomorphic to $\Aut(\bP_\bbK^1)=\PGL(2,\bbK)$.
Let $G$ be the subgroup of $\Aut(X)$ generated by
$$
s=(A, p),\quad
t=(B, q) \in \Aut(\bP_\bbK^1)\times \Aut(Y),
$$
where $A, B$ are elements in $\PGL(2,\bbK)$ defined in \S\ref{field K}.
Then $G$ becomes a finitely generated solvable subgroup of $\Aut(X)$.
In particular, the solvable length of $G$ is $2$.
\begin{proposition}
The subgroup 
$$
G^0 \coloneqq \Aut^0(X)\cap G = \Ker(G \lra \GL(H^*(X,\bQ)))
$$ of $G$ is not finitely generated.
\end{proposition}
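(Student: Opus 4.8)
The plan is to identify $G^0$ with the commutator subgroup $G'\coloneqq[G,G]$ of $G$, and then to show that the image of $G'$ under the projection of $\Aut(X)$ onto the factor $\PGL(2,\bbK)$ is precisely the infinitely generated abelian group produced in \S\ref{field K}. Since a homomorphic image of a finitely generated group is finitely generated, the non-finite generation of this image will force $G^0$ itself not to be finitely generated, which is the assertion.

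First I would check that $G^0=G'$. As $X=\bP_\bbK^1\times Y$ with $Y$ a K3 surface, $\Aut(X)=\PGL(2,\bbK)\times\Aut(Y)$, and $\Aut^0(X)$ is exactly the first factor $\PGL(2,\bbK)\times\{1\}$ (a pair $(g_1,g_2)$ acts trivially on $H^*(X,\bQ)=H^*(\bP_\bbK^1,\bQ)\otimes H^*(Y,\bQ)$ precisely when $g_2$ does, and for a K3 surface the representation on cohomology is faithful). Hence $G^0=G\cap\Aut^0(X)$ is the kernel of the second projection $\mathrm{pr}_Y\colon G\to\Aut(Y)$, whose image is $M=\langle p,q\rangle\cong\bZ^2$. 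Since $M$ is abelian, $G'\subseteq G^0$. For the reverse inclusion, I would note that the composite $F_2=\langle a,b\rangle\twoheadrightarrow G\xrightarrow{\mathrm{pr}_Y}M$ carries the free generators $a,b$ to the free basis $p,q$ of $M$, hence induces an isomorphism $F_2^{\ab}\xrightarrow{\ \sim\ }M$; as the surjection $F_2\twoheadrightarrow G$ induces a surjection $F_2^{\ab}\twoheadrightarrow G^{\ab}$ through which this isomorphism factors, the induced map $G^{\ab}\to M$ is itself an isomorphism. Therefore $G^0=\Ker(G\to M)=\Ker(G\to G^{\ab})=G'$.

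Next I would compute $\mathrm{pr}(G^0)$, where $\mathrm{pr}\colon\Aut(X)\to\PGL(2,\bbK)$ is the first projection. Let $\pi\colon F_2\to G$ be the surjection $a\mapsto s$, $b\mapsto t$; then $\pi(F_2')=G'$, and $\mathrm{pr}\circ\pi=\varphi$ directly from the definitions of $s$, $t$ and $\varphi$. Consequently
$$\mathrm{pr}(G^0)=\mathrm{pr}\big(\pi(F_2')\big)=\varphi(F_2')=\im(\varphi'),$$
and by Lemma~\ref{generation of second abelian} together with the $R$-module description of $N$, this is exactly the subgroup displayed in \eqref{infinitely generated abelian}, i.e. the group of matrices $\left(\begin{smallmatrix}1 & n\\0 & 1\end{smallmatrix}\right)$ with $n\in\bZ[\alpha^{\pm1},\beta^{\pm1}](1-\beta^{-1})$, which is identified via $n\mapsto\left(\begin{smallmatrix}1 & n\\0 & 1\end{smallmatrix}\right)$ with the ideal $\bZ[\alpha^{\pm1},\beta^{\pm1}](1-\beta^{-1})$ of the Laurent ring $\bZ[\alpha^{\pm1},\beta^{\pm1}]$. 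To see this is not finitely generated, I would use that $\alpha,\beta$ are algebraically independent over the prime field, so the monomials $\alpha^i\beta^j$ for $(i,j)\in\bZ^2$ form a $\bZ$-basis of $\bZ[\alpha^{\pm1},\beta^{\pm1}]$, making it free abelian of countably infinite rank; since this ring is an integral domain and $1-\beta^{-1}\neq0$, multiplication by $1-\beta^{-1}$ is injective, so $\bZ[\alpha^{\pm1},\beta^{\pm1}](1-\beta^{-1})$ is again free abelian of infinite rank and hence not finitely generated. As $\mathrm{pr}(G^0)$ is a quotient of $G^0$, the group $G^0$ cannot be finitely generated.

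The step I expect to require the most care — the main obstacle, such as it is — is the identification $G^0=G'$. It rests on $\Aut(X)$ decomposing as $\PGL(2,\bbK)\times\Aut(Y)$ with $\Aut^0(X)$ equal to the first factor (so that belonging to $\Aut^0(X)$ amounts to acting trivially on the $Y$-coordinate), together with the fact that the abelianization of $G$ is \emph{exactly} $M\cong\bZ^2$ rather than a proper quotient; this is what pins $G^0$ down as the full commutator subgroup whose image in $\PGL(2,\bbK)$ was computed in \S\ref{field K}. Everything else is either already established in the preceding subsections (the computation of $\varphi(F_2')=\im\varphi'$ as the group in \eqref{infinitely generated abelian}) or a standard fact about finitely generated groups.
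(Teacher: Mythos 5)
Your proposal is correct and follows essentially the same route as the paper: identify $G^0$ with $[G,G]=\psi(F_2')$, transport to $\PGL(2,\bbK)$ where this becomes $\im(\varphi')$, and invoke Lemma~\ref{generation of second abelian} together with \eqref{infinitely generated abelian}. The only stylistic divergence is in how you show $G^0=[G,G]$: the paper lifts an element of $G^0$ along the surjection $\psi\colon F_2\twoheadrightarrow G$ and uses the commuting square $\rho\circ\psi=j\circ\pi$ with $j$ injective, whereas you argue via $G^{\ab}\cong M$ and the factor decomposition of $\Aut(X)$; these are equivalent. Your explicit argument that $\bZ[\alpha^{\pm1},\beta^{\pm1}](1-\beta^{-1})$ is free abelian of infinite rank (hence not finitely generated) supplies a detail the paper leaves implicit, which is a small improvement.
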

\begin{proof}
We define homomorphisms $\psi \colon F_2 \to G$,
$\pi \colon F_2 \to M$ and $j \colon M \lra \GL(H^*(X))$
by
$$
\psi(a)=s,\quad
\psi(b)=t, \quad \pi(a)=p, \quad \pi(b)=q,
$$
and
$$
j(m)=(\id_{\bP_\bbK^1},m)^*.
$$
Then the morphism $\psi$ is surjective and $j$ is injective.
We consider the diagram
\[\xymatrix{
& F_2 \ar@{->>}[r]^-{\psi} \ar@{->>}[d]^-{\pi} \ar@_{->>}[ld] \ar@^{->>}[ld] & G \ar[d]^-{\rho} 
& \\
F_2/F_2' \ar[r]^-{\simeq} & M 
\ar@{^{(}->}[r]^-{j}
& \GL(H^*(X)).
}\]
Since the actions of $A, B \in \Aut(\bP_\bbK^1)$ on $H^*(\bP_\bbK^1)$ are trivial,
we have $\rho \circ \psi = j \circ \pi$.
The morphism $\pi$ will be identified with the abelianization of $F_2$.

We show that the group $G^0$ is equal to the image $\psi(F_2')$ of $F_2'$
under $\psi$. Let $k$ be an element of $G^0$.
Using the surjectivity of $\psi$, we choose an element 
$\widetilde{k}$ of $F_2$ such that $\psi(\widetilde{k})=k$.
Since $(j\circ \pi)(\widetilde{k})=(\rho\circ \psi)(\widetilde{k})=\rho(k)=\id$,
and $j$ is injective, we have
$$
\widetilde{k} \in \Ker(F_2 \xrightarrow{\ \pi\ } M)=F_2'.
$$
Thus we have $k\in \psi(F_2')$.

We set $G'=[G,G]$. Since $\psi$ is surjective, the group $\psi(F_2')=G^0$ is identified with $G'$.
Thus we have $G'=G^0\subset \Aut(\bP_\bbK^1)$.
Also, the image of $F''_2$ in $G'$ under the morphism $\psi$ is trivial.
The composite homomorphism
$$ (F_2')^{\ab} \longsurjmap G' =G^0 \longinjmap \Aut(\bP_\bbK^1)=\PGL(2,\bbK)$$
is nothing but the homomorphism
$\varphi'$ defined in 
(\ref{second abelianization}).
Therefore, $G^0 = \im(\varphi')$ is equal to the group
(\ref{infinitely generated abelian}) by Lemma 
\ref{generation of second abelian}, which is not
finitely generated.
\end{proof}

\vskip 2em

\end{document}